\documentclass[11pt]{article}

\usepackage[margin=1.19in]{geometry}
\usepackage{amsmath,amssymb,amsthm}
\usepackage{booktabs,tabularx}
\usepackage{enumitem}
\usepackage{graphicx}
\usepackage{microtype}
\usepackage{xcolor}
\usepackage[hidelinks]{hyperref}
\usepackage{tikz}
\usetikzlibrary{arrows.meta,calc,decorations.pathreplacing,positioning}

\newcolumntype{Y}{>{\raggedright\arraybackslash}X}

\setlist[enumerate,1]{label=(\roman*)}

\newtheorem{theorem}{Theorem}[section]
\newtheorem{lemma}[theorem]{Lemma}
\newtheorem{corollary}[theorem]{Corollary}
\theoremstyle{definition}
\newtheorem{definition}[theorem]{Definition}
\newtheorem{proposition}[theorem]{Proposition}
\newtheorem{remark}[theorem]{Remark}
\newtheorem{claim}[theorem]{Claim}

\newenvironment{claimproof}
  {\begin{proof}}
  {\end{proof}}

\newcommand{\dmin}{\delta^0}
\newcommand{\RN}{RN^+}
\newcommand{\cC}{\mathcal C}

\definecolor{reviewblue}{RGB}{22,76,145}

\title{Inheritance of expansion and oriented Hamilton cycles in robust expanders: a regularity-free proof}
\author{Louis DeBiasio\thanks{Department of Mathematics, Miami University,
Oxford, OH, USA. E-mail: \texttt{debiasld@miamioh.edu}. Research supported
in part by AMS--Simons Research Enhancement Grants
for PUI Faculty GR001015.}}
\date{\today}

\begin{document}
\maketitle

\begin{abstract}
H\"aggkvist and Thomason proved that every $n$-vertex oriented graph with minimum
semidegree at least $(\frac{5}{12}+o(1))n$ contains every orientation of a Hamilton
cycle. Using Szemer\'edi's regularity lemma, Kelly later improved this to
the asymptotically sharp bound $(\frac{3}{8}+o(1))n$. Taylor subsequently
generalized Kelly's theorem by proving that every sufficiently large robust outexpander
with linear minimum semidegree contains every orientation of a Hamilton
cycle.  We revisit H\"aggkvist and Thomason's proof and show that it can be
modified to give a regularity-free proof of Taylor's theorem (and thus
Kelly's theorem).  Building on this, we also give regularity-free proofs of related results
on Hamilton-connectivity and linkage in robust outexpanders. As applications
of these results, we are able to replace the use of the regularity lemma in known
results on arbitrary orientations of Hamilton cycles in $n$-vertex digraphs
with minimum semidegree at least $\frac{n}{2}$ and in $n$-vertex digraphs with
minimum total degree at least $(1+o(1))n$.

The first step is to show that, in an $n$-vertex robust outexpander with linear minimum semidegree, two uniformly chosen disjoint sets of logarithmic size satisfy Hall's condition with high probability.
The second is to show that robust expansion is
inherited by uniformly chosen linear sized sets. In both cases, we use the
graph-container methods of Kleitman--Winston and Sapozhenko to reduce the possible obstructions to a small enough family
to permit a union bound.
\end{abstract}

\section{Introduction}

H\"aggkvist and Thomason~\cite{HT} proved that every oriented graph with
minimum semidegree at least $(\frac{5}{12}+o(1))n$ contains every orientation of a
Hamilton cycle and they conjectured that the correct threshold should be
$(\frac{3}{8}+o(1))n$. Kelly, K\"uhn, and Osthus~\cite{KellyKuhnOsthus} first proved
the corresponding approximate threshold for directed Hamilton cycles (using Szemer\'edi's regularity lemma~\cite{Szemeredi} together with a result of H\"aggkvist on 1-factors in oriented graphs), and
Keevash, K\"uhn, and Osthus~\cite{KKO} subsequently determined the exact
threshold. Kelly~\cite{Kelly} then used Szemer\'edi's regularity lemma together with the result of
Keevash, K\"uhn, and Osthus to prove H\"aggkvist and Thomason's conjecture. We first recall the definition of robust outexpansion. Let $0<\nu\leq\tau<1$, and let $G$ be a digraph on $n$
vertices. For
$X\subseteq V(G)$, its \emph{$\nu$-robust outneighborhood} is
$\RN_\nu(X)=\{v\in V(G):|N^-(v)\cap X|\geq\nu n\}$.
The digraph $G$ is a \emph{robust $(\nu,\tau)$-outexpander} if, for all
$X\subseteq V(G)$ with $\tau n\leq |X|\leq(1-\tau)n$, we have
$|\RN_\nu(X)|\geq |X|+\nu n$.

This definition was introduced implicitly by H\"aggkvist and
Thomason~\cite{HT}, who proved that every $n$-vertex oriented graph with
minimum semidegree at least $(\frac{3}{8}+o(1))n$ is a robust outexpander. K\"uhn,
Osthus, and Treglown~\cite{KOT} began the systematic study of robust
outexpansion. Using the regularity lemma, they proved that every
sufficiently large robust $(\nu,\tau)$-outexpander with linear minimum
semidegree contains a directed Hamilton cycle. Taylor~\cite{Taylor} then
used their result, together with another application of the regularity
lemma, to prove that the same hypotheses guarantee every orientation of a
Hamilton cycle. Later, Lo and Patel~\cite{LP} gave a regularity-free proof
of the K\"uhn--Osthus--Treglown theorem. This also gives regularity-free
proofs of the results of Keevash, K\"uhn, and Osthus and Kelly, K\"uhn, and
Osthus. This left the problem of proving Taylor's theorem without the
regularity lemma. We solve this problem here.

In this paper, we revisit H\"aggkvist and Thomason's proof and strengthen a
key step using the Kleitman--Winston algorithm~\cite{KW}.
First we need the following definition from H\"aggkvist and Thomason. Let $G$ be a digraph, let $\lambda>0$, and let $S_1,S_2\subseteq V(G)$ be
disjoint sets with $|S_1|=|S_2|$. The pair $(S_1,S_2)$ is
\emph{$\lambda$-expanding} if, for all $i\in [2]$ and for all $A\subseteq S_i$ with $0<|A|\leq \frac{|S_i|}{2}$ we have $|N^\pm(A)\cap S_{3-i}|\geq(1+\lambda)|A|$.

Now we state our two main theorems. Neither statement follows directly
from the Chernoff--Hoeffding bound because there are too many possible
obstructions for a direct union bound. Under the stronger hypothesis
$\delta^0(G)\geq(\frac{5}{12}+o(1))n$, H\"aggkvist and Thomason obtained the following Hall-type conclusion; see
\cite[Lemmas~8 and~10]{HT}.

\begin{theorem}[Hall-type inheritance]\label{thm:hall-inherit}
Let $0<\nu\leq\tau\leq\frac{\eta}{8}<\frac{1}{8}$ and $n\geq\frac{16}{\nu^2}$.
If $G$ is an $n$-vertex robust
$(\nu,\tau)$-outexpander with $\dmin(G)\geq\eta n$, and $(S,S')$ is
chosen uniformly from the ordered pairs of disjoint $m$-subsets of $V(G)$,
where $1\leq m\leq \frac{n}{2}$, then the probability that $(S,S')$ is not
$\frac{\nu}{4}$-expanding is at most
$8n^{48/\nu^2}\exp(-\frac{\nu^2m}{288})$.
\end{theorem}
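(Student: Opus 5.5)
We fix $i$ and direction (say $i=1$, out-neighbourhoods; the other three cases are symmetric, which accounts for a factor $4$) and bound the probability that some $A\subseteq S$ with $0<|A|\le m/2$ has $|N^+(A)\cap S'|<(1+\frac{\nu}{4})|A|$. The natural way to sample is to first choose $S$ and then choose $S'$ uniformly among $m$-subsets of $V(G)\setminus S$. For a fixed set $A$ with $|A|=a$, the number of vertices of $S'$ lying in a fixed target set $T$ is hypergeometric, so Chernoff--Hoeffding for the hypergeometric distribution applies.

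We split into \emph{small} $A$ ($a\le \tau m$, say) and \emph{large} $A$. For small $A$ the relevant set is the ordinary out-neighbourhood. Since $\dmin(G)\ge\eta n\ge 8\tau n$, a single vertex already has out-neighbourhood of size $\ge \eta n$. The difficulty is that we need $(1+\nu/4)a$ vertices, and a union bound over all $\binom{m}{a}$ sets $A$ is too costly.

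For large $A$ we use robust expansion. Roughly, $|A|/m\approx |X|/n$ for an appropriate set $X$, and $\RN_\nu(X)$ has size at least $|X|+\nu n$. Vertices of $\RN_\nu$ have at least $\nu n$ in-neighbours in $X$. For the intersection with $S'$ we want $|N^+(A)\cap S'|\ge |\RN_\nu(X)\cap S'|$, which requires a vertex $v\in \RN_\nu(X)$ to have an in-neighbour in $A$ itself.

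The key step is the Kleitman--Winston container argument, which avoids the union bound over all $A$. Run the algorithm on $A$: repeatedly pick a vertex of $A$ maximising the number of out-neighbours in a shrinking ``available'' set. This produces a fingerprint $F\subseteq A$ with $|F|\le 48/\nu^2$, hence at most $n^{48/\nu^2}$ choices. The fingerprint determines a container $C(F)$ such that $A\subseteq C(F)$ and $N^+(F)$ already covers all but few of the vertices in $\RN_{\nu/2}(C(F))$. Concretely, any vertex with at least $\nu n/2$ in-neighbours in $C$ would be hit by some vertex of $F$; otherwise the greedy step would have removed many vertices, and this can only happen $O(1/\nu^2)$ times. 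We then have:
\begin{itemize}
\item $N^+(A)\supseteq N^+(F)$;
\item $N^+(F)$ contains $\RN_{\nu}(C)$ minus few vertices;
\item $|\RN_\nu(C)|\ge |C|+\nu n$ whenever $\tau n\le|C|\le(1-\tau)n$, and for $|C|<\tau n$ or $|C|>(1-\tau)n$ we instead use the semidegree bound $|\RN(C)|\ge \eta n\ge |C|+\nu n$ or a complementary argument.
\end{itemize}
Thus the bad event implies that for some fingerprint $F$, the set $S$ has $|S\cap C(F)|$ unusually large relative to $|C(F)|m/n$, or $S'$ has $|S'\cap N^+(F)|$ unusually small relative to $|N^+(F)|m/n$. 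Each of these is a single hypergeometric deviation of order $\nu m$. With deviation $\nu m/12$, say, each has probability at most $2\exp(-\nu^2 m/288)$. A union bound over the $n^{48/\nu^2}$ fingerprints and the four cases gives $8n^{48/\nu^2}\exp(-\nu^2m/288)$. The condition $n\ge16/\nu^2$ absorbs rounding and the small-$a$ regime: $a\ge1$ makes the fingerprint nonempty, and $\nu n$ exceeds the additive error terms.

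The main obstacle will be designing the container step so that a single fingerprint controls \emph{both} scales at once. It must give $|N^+(F)|\ge(1+\nu/2)|C(F)|$ in expectation terms, while still guaranteeing $A\subseteq C(F)$, uniformly in $a$ from $1$ to $m/2$. In particular, we also need $|N^+(F)\cap S'|$ compared to $|A|$, not just to $|C|$, which forces the comparison through $|S\cap C|\ge |A|$. I would first try defining $C(F)$ as the set of vertices $v$ whose out-neighbourhood is mostly contained in $N^+(F)$, so that $A\subseteq C(F)$ by the greedy maximality. I would then verify $|N^+(F)|\ge |C(F)|+\nu n/2$ by applying robust expansion to $C(F)$, handling the extreme sizes via $\dmin$.
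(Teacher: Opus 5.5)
Your out-neighbourhood argument is sound in outline. However, the claim that ``the other three cases are symmetric'' fails for the two in-neighbourhood conditions, and this is a genuine gap.

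Interchanging $S$ and $S'$ is harmless, but reversing edges is not, because only robust \emph{out}expansion is assumed. In the in-version of your argument (greedy stopping threshold $\epsilon n$), $F\subseteq A$ comes from in-neighbourhoods and $C(F)=\{v:|N^-(v)\setminus N^-(F)|<\epsilon n\}$. Your key step $|\RN_\nu(C)|\geq|C|+\nu n$ then becomes $|RN^-_\nu(C)|\geq|C|+\nu n$, which is robust inexpansion and is not a hypothesis. The only inexpansion the paper derives from the hypotheses (Lemma~\ref{lem:inexpansion}) has parameter $\nu^3$. Using it gives expansion ratio only $1+O(\nu^3)$ for $|A|$ near $\frac m2$, with a much larger fingerprint family and a weaker tail, so neither the factor $\frac{\nu}{4}$ nor the stated bound is reached.

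The missing idea is to use out-expansion of the \emph{complementary} set $T=V(G)\setminus N^-(F)$. Every vertex of $C$ has fewer than $\epsilon n$ in-neighbours in $T$, so $e_G(T,C)<\epsilon n^2$. In the out-case, likewise, $e_G(C,V(G)\setminus N^+(F))<\epsilon n^2$. With $\epsilon=\frac{\nu^2}{4}$, Lemma~\ref{lem:density} applied to these two sparse pairs shows that, in either direction, $|C|<\tau n$, or $|N^\pm(F)|>(1-\tau)n$, or $|N^\pm(F)|>|C|+\frac{\nu n}{2}$. All three cases then finish via typicality as you intend.

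This duality is what the paper builds in from the start. It places the empty pairs $(A,B)$ and $(B,A)$, where $B=S'\setminus N^\pm(A)$, into Kleitman--Winston containers in the bipartite representation $H_G$, and every container is constrained by Lemma~\ref{lem:density}. In-expansion of $A$ therefore comes from out-expansion on the side containing $B$, and a single container family serves all four conditions. Once repaired, your Sapozhenko-style container built on $A$ itself is a valid alternative. It avoids $H_G$ and even gives a smaller family, since $|F|\leq\frac{4}{\nu^2}$.

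Several smaller steps also need correcting.
\begin{itemize}
\item The claim that every vertex with at least $\frac{\nu n}{2}$ in-neighbours in $C$ is hit by $F$ is false. The stopping rule only yields the double count $e_G(C,V(G)\setminus N^+(F))<\epsilon n|C|$, which is why the threshold must be $O(\nu^2 n)$.
\item For $|C|<\tau n$ the set $\RN_\nu(C)$ may be empty. Use $|N^+(F)|\geq\eta n$ instead, which holds since $F\neq\emptyset$.
\item For $|C|>(1-\tau)n$ your target $|N^+(F)|\geq|C|+\frac{\nu n}{2}$ is false (take $|C|=n$). There you must use $|A|\leq\frac m2$ instead.
\item Since $F$ depends on $S$, bound $|S'\cap N^\pm(F)|$ through the marginal law of $S'$ and a union bound over all possible $F$, not through the conditional law given $S$. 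Under that conditional law the mean can fall well below $\frac{m}{n}|N^\pm(F)|$.
\item A deviation of $\frac{\nu m}{12}$ gives $e^{-\nu^2m/72}$. The exponent $\frac{\nu^2m}{288}$ corresponds to a deviation of $\frac{\nu m}{24}$.
\end{itemize}
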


In particular, for all $\lambda>0$, the pair $(S,S')$ in
Theorem~\ref{thm:hall-inherit} is $\frac{\nu}{4}$-expanding with probability at
least $1-n^{-\lambda}$ whenever
$m\geq\frac{288}{\nu^2}((\frac{48}{\nu^2}+\lambda)\log n+\log 8)$.

The following is an explicitly quantified, regularity-free version of a
robust-inheritance statement implicit in Taylor's proof. Gir\~ao, Granet,
K\"uhn, Lo, and Osthus~\cite[Lemma~14.3]{GGKLO} later stated a slightly more
general version for random sets of linear size. Their result follows from
the same regularity-based argument; a full proof appears in Granet's
thesis~\cite[Lemma~B.5]{Granet}. We give a regularity-free proof of this
result while extending inheritance of robust expansion to random sets of
arbitrary growing size.

\begin{theorem}[Robust inheritance]
\label{thm:robust-inherit}
There exists an absolute constant $K_1>0$ such that the following holds.
Let $0<\nu\leq\tau\leq\frac{\eta}{2}<\frac{1}{2}$, and set
$\rho=\frac{\nu^8}{K_1\log^2(K_1/\nu)}$.
For all $n\geq2$, let $G$ be an $n$-vertex robust
$(\nu,\tau)$-outexpander satisfying $\dmin(G)\geq\eta n$, and let $W$ be
a uniformly random $m$-subset of $V(G)$, where
$\frac{K_1}{\nu^4}\log(\frac{K_1}{\nu})\leq m\leq n$. The following estimates hold:
\begin{align*}
 \Pr\bigl(G[W]\text{ is not a robust $(\rho,2\tau)$-outexpander}\bigr)
 &\leq2\exp\left(-\frac{\nu^2m}{64}\right),\\
 \Pr\left(\exists v\in V(G),~ \sigma\in\{+,-\}:
 |N^\sigma(v)\cap W|<\frac{\eta m}{2}\right)
 &\leq2n\exp\left(-\frac{\eta^2m}{2}\right).
\end{align*}
\end{theorem}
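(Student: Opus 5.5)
The second estimate is routine and I would dispose of it first. For fixed $v$ and $\sigma$, the quantity $|N^\sigma(v)\cap W|$ is hypergeometric with mean at least $\eta m$. Hoeffding's bound for sampling without replacement gives $\Pr(|N^\sigma(v)\cap W|<\eta m/2)\le\exp(-2(\eta/2)^2m)=\exp(-\eta^2m/2)$. A union bound over the $n$ vertices and the two signs then gives $2n\exp(-\eta^2m/2)$.

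For the first estimate, the plan is to reduce robust expansion of $G[W]$ to a statement about a bounded family of \emph{certificates}, and then take a union bound over that family.

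Suppose $G[W]$ is not a robust $(\rho,2\tau)$-outexpander. Then there is $X\subseteq W$ with $2\tau m\le|X|\le(1-2\tau)m$ such that $Y:=RN^+_{\rho,G[W]}(X)$, computed inside $W$ with threshold $\rho m$, satisfies $|Y|<|X|+\rho m$. There are far too many such $X$ for a direct union bound, so I would compress $X$ in the Kleitman--Winston manner.

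Run the greedy container algorithm on $X$. Repeatedly pick a vertex of $X$ whose outneighbourhood in the current ``alive'' part of $W$ is large, and record it. This produces a fingerprint $F\subseteq X$ of size $O(\log(1/\nu)/\nu^{c})$, independent of $m$, together with a container $C=C(F)\supseteq X$ depending only on $F$. I want $C$ to have two properties. First, the set of vertices receiving at least $\nu n/2$ in-edges from $C$ in $G$ is controlled by the robust neighbourhood of $X$. Second, $|C\setminus X|$ is small, say at most $\nu m/8$.

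In fact it seems cleaner to work with fractional versions. Let $U\subseteq V(G)$ be a set of $O(1/\nu^2)$ vertices of $X$ drawn at random; this is the fingerprint. Each vertex whose in-degree from $X$ is large has proportional in-degree from $U$, by sampling. So from $U$ one can read off an ``$n$-level'' set $X^*=\{x\in V(G):\ldots\}$ with $\tau n\le|X^*|\le(1-\tau)n$, such that $X\cap W$ and $X^*\cap W$ nearly coincide.

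Robust expansion of $G$ then gives $|RN^+_\nu(X^*)|\ge|X^*|+\nu n$. The event to bound becomes: there is a fingerprint $U\subseteq W$ of bounded size such that the random set $W$ has fewer than $|X^*\cap W|+\rho m$ vertices of $RN^+_{\nu/2}(X^*)$, or has a large deviation in $|X^*\cap W|$, or has many vertices with atypical in-degree from $X^*\cap W$. Each of these is a hypergeometric deviation of order $\nu m$, with probability $\exp(-\Omega(\nu^2m))$. There are at most $n^{O(1/\nu^2)}$ fingerprints, but that count is too many when $m$ is only of size $\log(1/\nu)/\nu^4$.

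So the key fix, and the main obstacle I expect, is to choose the fingerprint from within $W$ itself. I would condition on $W$ and count fingerprints as subsets of $W$, giving $m^{O(\log(1/\nu)/\nu^{c})}$ of them. To handle the dependence between the fingerprint and $W$, I would expose $W$ in two rounds: a small first part $W_1$ from which fingerprints are drawn, then $W_2=W\setminus W_1$, on which the concentration is applied conditionally. The choice $\rho=\nu^8/(K_1\log^2(K_1/\nu))$ and the lower bound on $m$ are exactly what is needed to make the count
\[
\binom{m}{O(\nu^{-4}\log(1/\nu))}\exp\left(-\frac{\nu^2m}{32}\right)\le\exp\left(-\frac{\nu^2m}{64}\right).
\]
The delicate point is to verify that a fingerprint of size polynomial in $1/\nu$ suffices. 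Specifically, the container must approximate $X$ up to an error $\rho m$, so that every failure of expansion in $G[W]$ lifts to a failure of $G$ at level $\nu$. The factor of $2$ in $2\tau$ absorbs the sampling error in $|X^*|$.
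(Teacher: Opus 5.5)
Your degree estimate is fine, but the robust-expansion argument has a gap at the lifting step. You need a bounded fingerprint that yields an $n$-level set $X^*$ with $X^*\cap W$ nearly equal to $X$ (in your first version, a container with $|C\setminus X|\le\frac{\nu m}{8}$). You then apply robust expansion of $G$ to $X^*$. Nothing in the sketch supplies such a set.

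A bounded random sample $U\subseteq X$ approximately determines the in-degree profile $v\mapsto|N^-(v)\cap X|$, and hence roughly $RN^+(X)$. It does not determine which vertices of $W$ belong to $X$: witnesses differing in linearly many vertices can share the same $U$. Likewise, a Kleitman--Winston container contains its set but is in general far larger than it. Without near-coincidence, robust expansion of $X^*$ in $G$ says nothing about $RN^+_{\rho,G[W]}(X)$, because vertices with many inneighbours in $X^*$ need not have any in $X$.

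The quantitative end is also inconsistent. With fingerprints of size $\Theta(\nu^{-4}\log(1/\nu))$ and $m$ of the same order, $\binom{m}{f}=e^{\Theta(m)}$ cannot be absorbed by $e^{-\nu^2m/32}$. Moreover, no step of your outline actually produces the specific form of $\rho$.

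The missing idea is to replace approximation by \emph{containment of a sparse pair}, so that only upper tails are needed. Set $B=W\setminus RN^+_{\rho,G[W]}(A)$. Then $e_G(A,B)<\rho m^2$, $|B|>(2\tau-\rho)m$ and $|A|+|B|>(1-\rho)m$. Robust expansion of $G$ enters only through Lemma~\ref{lem:density}. Any $P\supseteq A$, $Q\supseteq B$ with $e_G(P,Q)<\frac{\nu^2n^2}{4}$ must have $|P|<\tau n$, $|Q|<\tau n$, or $|P|+|Q|<(1-\frac{\nu}{2})n$. Each alternative contradicts an upper-tail bound for $|W\cap P|$ or $|W\cap Q|$.

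Because $(A,B)$ is sparse rather than empty, plain Kleitman--Winston does not apply: deleting a selected vertex's neighbourhood could delete vertices of $B_R$. The paper therefore uses the sparse version on the bipartite representation $H_G$ (Lemma~\ref{lem:sparse-threshold}). Two kinds of vertex are recorded in an extra set of size at most $\theta N+\frac{M}{\theta d}$: the neighbours of fingerprint vertices inside $A_L\cup B_R$, and the vertices of $A_L\cup B_R$ with many neighbours there. With $d=\frac{\nu^2n}{8}$, $M=\rho m^2$ and $\theta=\frac{8\sqrt\rho\, m}{\nu^2n}$, this is at most $\zeta m$, where $\zeta=(\frac{16}{\nu^2}+1)\sqrt\rho$. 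Requiring $2\zeta(1+\log\frac{9e}{2\zeta})\le\frac{\nu^2}{64}$ is exactly what forces $\rho\approx\nu^8/\log^2(1/\nu)$.

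Your instinct to count fingerprints inside $W$ is right, but the paper implements it differently. Since a fingerprint $T$ may have up to $2\zeta m$ vertices, each is weighted by $\Pr(T\subseteq W)\le(\frac mn)^t$. Chernoff--Hoeffding is then applied conditionally on $T\subseteq W$, and $\sum_t\binom nt9^t(\frac mn)^t\le e^{\nu^2m/64}$. This replaces your two-round exposure, which does not work as described. A Kleitman--Winston fingerprint is a deterministic function of the witness and has no reason to lie in a prescribed first-round part $W_1$. A random-sample fingerprint, on the other hand, runs into the lifting problem above.
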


We make no attempt to optimize the absolute constant $K_1$.
In particular, for all $\lambda>0$, both conclusions of
Theorem~\ref{thm:robust-inherit} hold simultaneously with probability at
least $1-n^{-\lambda}$ whenever
\begin{equation*}
 m\geq\max\left\{
 \frac{K_1}{\nu^4}\log(\frac{K_1}{\nu}),
 \frac{64}{\nu^2}(\lambda\log n+\log 4),
 \frac{2}{\eta^2}\bigl((\lambda+1)\log n+\log 4\bigr)
 \right\}.
\end{equation*}

\subsection{Applications}

Theorem~\ref{thm:hall-inherit} supplies the missing ingredient that allows one to generalize the
H\"aggkvist--Thomason argument to any robust
outexpander of linear minimum semidegree. Thus we give a regularity-free proof of
Taylor's theorem in the following stronger form.

A digraph $G$ is \emph{universally Hamilton-connected} if, for all distinct
$x,y\in V(G)$ and all oriented paths $P$ on $V(G)$, it contains a copy
of $P$ whose first and last vertices are $x$ and $y$, respectively.
Here first and last refer to the underlying order of $P$, independently
of the directions of its end edges.

\begin{theorem}\label{thm:main}
There exists an absolute constant $K_2>0$ such that the following holds. Let
$0<\nu\leq\tau\leq\frac{\eta}{8}<\frac{1}{8}$ and
$n\geq\exp(\frac{K_2}{\nu^4})$, and let $G$ be an $n$-vertex digraph.
If $G$ is a robust $(\nu,\tau)$-outexpander and satisfies
$\delta^0(G)\geq\eta n$, then $G$ is universally Hamilton-connected.
In particular, $G$ contains every orientation of a Hamilton cycle.
\end{theorem}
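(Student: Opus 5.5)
The plan is to follow the Häggkvist--Thomason strategy, using Theorem~\ref{thm:hall-inherit} in place of their $\frac{5}{12}$-specific Lemmas~8 and~10. Fix distinct $x,y$ and an oriented path $P$ on $n$ vertices. Split into two cases according to how many direction changes $P$ has.

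\emph{Many changes.} Suppose $P$ has many sink/source vertices, say at least $\sqrt n$ of them, or at least many disjoint short segments containing a change of direction. These segments serve as absorbing flexibility.

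\emph{Few changes.} Here $P$ is a concatenation of few long directed paths. We embed each long directed piece using robust expansion. The directed Hamilton path/cycle machinery of Lo--Patel (regularity-free Kühn--Osthus--Treglown) lets us route directed paths. Theorem~\ref{thm:robust-inherit} guarantees that random linear-sized subsets $W$ still induce robust outexpanders with linear semidegree. We can therefore partition $V(G)$ randomly into a few pieces, embed each long directed segment inside its own piece, and then connect consecutive pieces. The linking is done through reserved random sets, using the short-connection property of robust expanders: any two vertices are joined by a directed path of length $O(1/\nu)$ through any set inheriting expansion.

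In both cases the core step is Häggkvist--Thomason's embedding scheme. First, set aside a random ordered pair $(S,S')$ of disjoint $m$-sets with $m=\Theta(\nu^{-4}\log n)$. By Theorem~\ref{thm:hall-inherit} and the bound $n\ge\exp(K_2/\nu^4)$, the pair is $\frac{\nu}{4}$-expanding with probability $1-o(1)$. Simultaneously, Chernoff gives every vertex many in- and out-neighbours in $S$ and in $S'$. Next, greedily embed all but a piece of $P$ of length about $2m$ into $G-(S\cup S')$, starting at $x$. We keep the unused set random-like by embedding vertex-by-vertex into a random-looking remainder, again verified via Theorem~\ref{thm:robust-inherit} applied to the leftover set. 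This greedy step uses only the minimum semidegree plus robust expansion to handle the near-end stage. Specifically, while many vertices remain, semidegree suffices. For the last $\tau n$ vertices, we use robust expansion of the remainder to ensure each next vertex has a correctly oriented neighbour among the leftover vertices, possibly after swapping a used vertex with an unused one.

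Finally, the remaining segment of $P$, which alternates between two "sides," is embedded so that the required vertices fall alternately into $S$ and $S'$ together with the leftover vertices. Its endpoint is $y$. The $\lambda$-expanding property gives Hall's condition, hence a perfect matching between the required positions in $S$ and $S'$ with the correct orientations. Expansion by a factor $1+\lambda$ for sets up to half size, combined with degree into the other side for larger sets, gives Hall for all sets. This completes the copy of $P$.

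The main obstacle I expect is this final absorption step. We must arrange the path's tail so that the vertices left over after the greedy phase can be absorbed into the $S/S'$ structure with the correct edge directions at every position. This requires that every leftover vertex have neighbours of the needed orientation in $S$ and $S'$, and that the orientation pattern of $P$'s tail be compatible with a bipartite matching formulation. I would first try reducing the tail to a pattern in which each leftover vertex is inserted between two consecutive $S$/$S'$ vertices, with Hall's condition checked on the auxiliary bipartite graph. If orientation patterns obstruct this, I would fall back on the case split above, treating nearly-directed tails via the directed Hamilton connectivity of robust outexpanders.
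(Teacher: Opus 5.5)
There is a genuine gap, and it is more basic than the orientation issue you flag. First, your greedy phase has to cover all but $O(\log n)$ vertices of $G-(S\cup S')$, and nothing you cite controls its final steps. Robust expansion only concerns sets of order between $\tau n$ and $(1-\tau)n$, so it cannot guarantee that the current endpoint has a correctly oriented neighbour among a small set of unused vertices. Theorem~\ref{thm:robust-inherit} applies to uniformly random sets, not to the leftover set of a greedy embedding, which is determined by the embedding itself. The unspecified ``swapping'' is therefore an absorption argument, which is the whole difficulty.

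Second, a single $\frac{\nu}{4}$-expanding pair $(S,S')$ gives, via Hall, perfect matchings between $S$ and $S'$, not a path. If odd positions of the tail go to $S$ and even ones to $S'$, the edges $v_{2i-1}v_{2i}$ and $v_{2i}v_{2i+1}$ form two matchings. Their union is in general a disjoint union of a path and cycles, and nothing forces a single path that also absorbs the leftover vertices and ends at $y$. The case split does not repair this: in the ``few changes'' case, exactly covering the reserved connecting sets is the same problem again. Also, what you call the core step is not the H\"aggkvist--Thomason scheme.

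The missing idea is their actual mechanism, which the paper adapts: route \emph{many} paths of equal length in parallel through a \emph{long} sequence of disjoint layers $W_0,\ldots,W_T$. Layering is what turns matchings into paths. Matchings between consecutive layers combine into vertex-disjoint paths using every layer vertex, since there are no edges back to earlier layers and hence no cycles. So exact covering comes for free, and \cite[Theorem~6]{HT} shows that matched-pair conditions on consecutive layers suffice to choose the matchings with the orientations prescribed by $s$ arbitrary paths.

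The paper takes $s=\lceil(\log n)^9\rceil$ and obtains a robust pipeline from Lemma~\ref{lem:robust-pipeline}. There, Theorem~\ref{thm:hall-inherit}, in the form of Theorem~\ref{thm:lemma8} for typical sets, is applied to every consecutive pair of layers, also after deleting a few vertices. Because a pipeline does not let you prescribe which path starts or ends where, the paper adds $(s,A)$-sorters built from Batcher networks (Lemma~\ref{lem:sorters}), which realize every permutation. The assembly is then as follows.
\begin{enumerate}
\item By pigeonhole, $P$ contains $s$ disjoint pieces whose first and last $\ell+k$ edges have identical orientation patterns.
\item The middles of these pieces go through one pipeline, and their ends through the two sorters.
\item The intervening pieces are joined by dummy edges into one auxiliary path, routed from $x$ to $y$ through a pipeline of width $s+1$.
\item Connecting paths of length $k$, built through the spare vertices of the width-$(s+1)$ layers, join everything together.
\end{enumerate}
The congruence \eqref{eq:direct-congruence} and the count \eqref{eq:direct-residual} ensure that every vertex is used exactly once.
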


The proof of DeBiasio, K\"uhn, Molla, Osthus, and Taylor~\cite{DKMOT}
uses Taylor's theorem in the non-extremal case; their extremal argument
does not use the regularity lemma\footnote{The extremal arguments in~\cite{DKMOT} require only polynomial lower bounds on $n$ in terms of their auxiliary parameters, while the
robust-expander case now uses $n\geq\exp(\frac{K_2}{\nu^4})$.
We do not attempt to optimize the resulting absolute constant $n_0$.}. By replacing the use of Taylor's
theorem with Theorem~\ref{thm:main} and retaining the remainder of their
proof unchanged, we obtain a regularity-free proof of the following result.

\begin{corollary}[\cite{DKMOT}]\label{thm:DKMOT}
There exists a constant $n_0$ such that if $G$ is an $n$-vertex digraph with $n\geq n_0$ and $\delta^0(G)\geq \frac{n}{2}$, then $G$ contains every orientation of a Hamilton cycle (except perhaps the anti-directed orientation).
\end{corollary}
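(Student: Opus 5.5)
The statement to prove is Corollary~\ref{thm:DKMOT}. The plan is to rerun the argument of DeBiasio, K\"uhn, Molla, Osthus and Taylor~\cite{DKMOT} verbatim, changing only the non-extremal case: there, Taylor's theorem is replaced by Theorem~\ref{thm:main}. Their proof is a case split governed by small constants $0<\nu\le\tau\le\eta/8$, chosen to satisfy the hierarchy in~\cite{DKMOT}, with $\eta$ slightly below $\frac12$ (say $\eta=\frac14$ suffices since $\delta^0(G)\ge n/2$).

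First I would record the structural dichotomy from~\cite{DKMOT}. Every $n$-vertex digraph with $\delta^0(G)\ge n/2$ either is a robust $(\nu,\tau)$-outexpander, or is $\varepsilon$-close to one of a small list of extremal configurations. These configurations are essentially two near-complete halves (or near-complete bipartite pieces) with few edges between the parts. This dichotomy is proved in~\cite{DKMOT} by elementary counting: if some $X$ with $\tau n\le|X|\le(1-\tau)n$ has $|\RN_\nu(X)|<|X|+\nu n$, then the semidegree condition forces almost all edges to be structured around $X$ and $\RN_\nu(X)$. That argument uses no regularity, so it is retained unchanged, with $\varepsilon$ chosen as a function of $\nu,\tau$ as they require.

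In the robust-expander case, $G$ satisfies $\delta^0(G)\ge n/2\ge\eta n$ with $\nu\le\tau\le\eta/8$. Hence for $n\ge\exp(K_2/\nu^4)$, Theorem~\ref{thm:main} applies and shows that $G$ contains every orientation of a Hamilton cycle; it gives even universal Hamilton-connectivity. This includes the anti-directed one, so nothing is lost in this case. In the extremal cases, I would invoke the explicit constructions of~\cite{DKMOT}. They embed each non-anti-directed orientation by hand, using only the closeness structure, greedy embedding and matching arguments. Those proofs need $n$ polynomially large in $1/\varepsilon$, and they exclude precisely the anti-directed cycle, which is a genuine exception (e.g.\ for $n$ odd in certain extremal digraphs).

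The main obstacle is bookkeeping of constants rather than new mathematics. I must check that the hierarchy in~\cite{DKMOT} allows $\nu,\tau$ to be fixed first, with $\varepsilon$ depending on them, and also that Taylor's theorem was invoked there only through its conclusion, not through regularity-specific byproducts such as a reduced graph. Given the footnote, I would then set $n_0=\max\{\exp(K_2/\nu^4),n_{\mathrm{ext}}\}$, where $n_{\mathrm{ext}}$ is the extremal-case threshold, and conclude.
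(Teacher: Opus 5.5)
Your proposal is correct and follows the paper's argument exactly: keep the regularity-free dichotomy and extremal-case analysis of \cite{DKMOT}, replace Taylor's theorem by Theorem~\ref{thm:main} in the robust-expander case, and take $n_0$ to be the maximum of $\exp(K_2/\nu^4)$ and the extremal-case threshold. Two side remarks need correcting, though neither affects the proof. An anti-directed Hamilton cycle can exist only when $n$ is even, so the genuine exception (the two extremal digraphs of DeBiasio--Molla) occurs for even $n$, not odd $n$. And the constant-ordering worry you flag is not an obstacle: $\nu$ must in any case be small relative to $\varepsilon$, and all constants here are absolute.
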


The anti-directed case was already treated without the use of the regularity lemma by
DeBiasio and Molla~\cite{DM}. They proved that, for sufficiently
large even $n$, the condition $\delta^0(G)\geq \frac{n}{2}$ guarantees an
anti-directed Hamilton cycle apart from two explicit extremal digraphs; in
particular, $\delta^0(G)\geq \frac{n}{2}+1$ always suffices.

DeBiasio and Treglown gave an asymptotic generalization of Corollary \ref{thm:DKMOT} which says that if $G$ is a sufficiently large $n$-vertex digraph with $\delta(G)\geq (1+o(1))n$, then $G$ contains every orientation of a Hamilton cycle (except perhaps the directed orientation).
The only use of the regularity lemma in their proof is a strengthening of Taylor's theorem regarding  universal $k$-linkage~\cite[Theorem~3.5]{DT} (stated below).  Theorems~\ref{thm:robust-inherit}
and~\ref{thm:main} can be combined to give a regularity-free proof of this linkage result with explicit quantitative bounds.

\begin{theorem}[Universally $k$-linked]
\label{thm:klinked}
There exists an absolute constant $K_3>0$ such that the following
holds. Let $0<\beta\leq\frac{\nu^3}{64}$,
$0<\nu\leq\tau\leq\frac{\eta}{2048}<\frac{1}{2048}$, and
$n\geq\exp(\frac{K_3}{\nu^{40}})$.
Let $k\in\mathbb N$ satisfy $k\leq\frac{1}{\beta^2}$, and let $G$ be an
$n$-vertex robust $(\nu,\tau)$-outexpander satisfying
$\delta^0(G)\geq\eta n$. Let $Q_1,\ldots,Q_k$ be oriented paths with
$|Q_i|=:\ell_i\geq\frac{1}{\beta}$ for all $i\in[k]$ and
$\ell_1+\cdots+\ell_k=n$. For all choices of pairwise distinct
vertices $u_1,\ldots,u_k,v_1,\ldots,v_k\in V(G)$, there exists a family
$P_1,\ldots,P_k$ of pairwise vertex-disjoint oriented paths in $G$ such that, for
all $i\in[k]$, the path $P_i$ is a copy of $Q_i$ whose first vertex is
$u_i$ and whose last vertex is $v_i$.
\end{theorem}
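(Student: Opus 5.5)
We will prove Theorem~\ref{thm:klinked} by partitioning $V(G)$ at random into $k$ parts $W_1,\ldots,W_k$ with $|W_i|=\ell_i$. We first route all the fixed endpoints into their parts and then apply Theorem~\ref{thm:main} inside each part. Since $\ell_i\geq 1/\beta$ may be only a constant, while Theorem~\ref{thm:main} needs $|W_i|\geq\exp(K_2/\rho^4)$, we cannot use it directly on short paths. So we first group the paths.

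\emph{Step 1: grouping.} Call $Q_i$ long if $\ell_i\geq \sqrt n$ and short otherwise. Since $k\leq \beta^{-2}$ is a constant, the short paths cover at most $\beta^{-2}\sqrt n=o(n)$ vertices. Hence at least one path is long, and in fact has length at least $n/k\geq\beta^2 n$.

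\emph{Step 2: embed the short paths greedily.} We embed the short paths first. Using $\delta^0(G)\geq\eta n$ and the fact that only $o(n)$ vertices are used, we embed each short $Q_i$ as a copy starting at $u_i$. We then connect it to $v_i$ through a short oriented connecting segment, which exists because a robust outexpander with linear semidegree has, between any two vertices, oriented walks of every orientation pattern of some bounded length $O(1/\nu)$. The greedy embedding must reserve the prescribed lengths exactly. To do this we embed all but the last $O(1/\nu)$ vertices greedily and realise the final segment with the required orientation pattern via expansion; this is where $\ell_i\geq1/\beta\geq 64/\nu^3$ is used. Removing $o(n)$ vertices preserves robust $(\nu/2,2\tau)$-outexpansion and semidegree $\eta n/2$.

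\emph{Step 3: split the remainder among the long paths.} Let $R$ be the remaining vertex set, with $|R|=\sum_{\text{long}}\ell_i$. We choose a uniformly random partition of $R$ into sets $W_i\ni u_i,v_i$ of size $\ell_i$. More precisely, we choose random sets $W_i'$ of size $\ell_i-2$ from $R\setminus\{u_j,v_j\}$ and add $u_i,v_i$. Each $W_i'$ is marginally a uniform subset, of size at least $\sqrt n/2$. We apply Theorem~\ref{thm:robust-inherit} to each $W_i'$ with failure probability $n^{-2}$, and take a union bound over at most $\beta^{-2}$ sets. This gives that each $G[W_i']$ is a robust $(\rho,4\tau)$-outexpander with semidegree at least $\eta|W_i'|/4$, and that $u_i,v_i$ have at least $\eta|W_i'|/4$ in- and out-neighbours in $W_i'$. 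The latter then gives that $G[W_i]$ is still a robust $(\rho/2,8\tau)$-outexpander with linear semidegree. We need $|W_i|\geq \sqrt n/2\geq\exp(K_2(\rho/2)^{-4})$. Since $\rho\approx\nu^8$, this asks for $\exp(\Theta(\nu^{-32}\log^{8}))$, which is why the hypothesis has $\nu^{-40}$. The threshold $\tau\leq\eta/2048$ allows the several halvings of parameters needed to stay within Theorem~\ref{thm:main}'s condition $8\tau\le\eta/8$ after degradation.

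\emph{Step 4: finish.} Theorem~\ref{thm:main} applied to each $G[W_i]$ gives a copy of $Q_i$ from $u_i$ to $v_i$ spanning $W_i$. Together with the short paths from Step 2, this yields the required family.

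The main obstacle is Step 2: embedding short paths exactly, with prescribed length and orientation and ending precisely at $v_i$, without Theorem~\ref{thm:main}. If the bounded-length connection argument is awkward, the fallback is to absorb each short path into a long one. We would take a random set of size about $\beta^2 n$ with several short tasks, and use Theorem~\ref{thm:main} on a modified auxiliary digraph in which each short path's endpoints are joined through the long path's structure. We would try the direct expansion-based connection first.
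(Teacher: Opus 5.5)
\textbf{Gap in Step~1.} The step rests on the claim that $k\leq\beta^{-2}$ is a constant, but nothing in the statement bounds $\beta$ from below. The only largeness hypothesis is $n\geq\exp(K_3/\nu^{40})$, so $\beta$, and with it $k$ and the lengths $\ell_i$, may depend on $n$.

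For example, take $n$ a perfect cube, $\beta=n^{-1/3}$, $k=n^{2/3}$, and $\ell_i=n^{1/3}$ for all $i$. This satisfies every hypothesis, yet with your cutoff $\sqrt n$ every path is short. Step~2 would then have to build a spanning family of paths by greedy extension plus short connections. That cannot work once most vertices are used, and Lemma~\ref{lem:bounded-deletion} only tolerates deleting $\frac{\nu n}{4}$ vertices.

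Your estimate $k\sqrt n=o(n)$ really needs $\beta^{-2}\sqrt n\leq c\nu n$, i.e.\ roughly $n\geq C\nu^{-2}\beta^{-4}$. So as written you prove only a version with $n\geq n_0(\nu,\beta)$. The fallback of absorbing short paths into long ones does not rescue this, since there may be no long path at all.

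\textbf{The missing idea: a $\beta$-dependent cutoff.} Call $Q_i$ short if $\ell_i\leq an$, where $a=\frac{\beta^2\nu}{64}$.
\begin{itemize}
\item \emph{Short paths fit in a small set.} From $\ell_i\geq\frac{1}{\beta}$ and $\sum\ell_i=n$ we get $k\leq\beta n$. Together with $k\leq\beta^{-2}$, the short paths and all prescribed endpoints occupy at most $kan+2k\leq\frac{\nu n}{64}+2\beta n\leq\frac{\nu n}{16}$ vertices.
\item \emph{Embedding them is your Step~2.} Let $F$ be the digraph left after deleting the earlier copies and the other endpoints. Each short path embeds via Lemma~\ref{lem:fixed-endpoint-paths}, applied to $F$ with parameters $\frac{\nu}{2},2\tau,\frac{\eta}{2}$, because $\frac{16}{\nu^3}\leq\ell_i-1\leq an\leq\frac{\nu^3}{32}|F|$. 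That lemma is exactly your Step~2. Its final connecting segment has length about $\frac{2}{\nu^3}$, not $O(\frac{1}{\nu})$, since backward steps only enjoy robust inexpansion with parameter $\nu^3$ (Lemma~\ref{lem:inexpansion}).
\item \emph{Long paths stay polynomially large.} Every remaining path satisfies $\ell_i\geq\max\{an,\beta^{-1}\}\geq(an\cdot\beta^{-2})^{1/3}=(\frac{\nu n}{64})^{1/3}$. Hence $m_i=\ell_i-2\geq n^{1/4}$ whatever $\beta$ is, and $\log m_i\geq\frac{K_3}{4\nu^{40}}$.
\item \emph{Why that suffices.} This polynomial lower bound lets you apply Theorem~\ref{thm:robust-inherit} in each part, via Corollary~\ref{cor:random-linear-partition}, whose union bound covers up to $n$ parts and the endpoint degree conditions. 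It also lets you apply Theorem~\ref{thm:main} in each part.
\end{itemize}
With this change, your Steps~3 and~4 go through essentially as in the paper.
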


By replacing the use of \cite[Theorem~3.5]{DT} with Theorem~\ref{thm:klinked} and retaining the remainder of their
proof\footnote{More concretely, the proof of
\cite[Proposition~2.1]{DT} allows one to choose the robust expansion parameter $\nu$ so that 
$\frac{1}{\nu}\leq\exp(\exp(O(\frac{1}{\eta})))$ which ulimately gives $n_0\leq\exp(\exp(\exp(O(\frac{1}{\eta}))))$.
The remaining parameter choices outside the linkage theorem have
polynomial dependence on the auxiliary parameters, so
Theorem~\ref{thm:klinked} gives $n_0\leq\exp(\exp(\exp(O(\frac{1}{\eta}))))$.
We do not attempt to optimize this bound, but there is a chance that this bound on $n_0$ could be improved by improving the bounds in \cite[Proposition~2.1]{DT}.} unchanged, we obtain a regularity-free proof of the following result.

\begin{corollary}[\cite{DT}]\label{thm:DT}
For all $\eta>0$, there exists a constant $n_0$ such that if $G$ is an $n$-vertex digraph with $n\geq n_0$ and $\delta(G)\geq (1+\eta)n$, then $G$ contains every orientation of a Hamilton cycle (except perhaps the directed Hamilton cycle).
\end{corollary}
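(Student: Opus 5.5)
The plan is to follow the argument of DeBiasio and Treglown~\cite{DT} verbatim, changing only the single step where they invoke the regularity lemma. That step is \cite[Theorem~3.5]{DT}, their universal $k$-linkage statement for robust outexpanders. We replace it by Theorem~\ref{thm:klinked}. The corollary then follows once we check two things: that every place where \cite[Theorem~3.5]{DT} is applied meets the hypotheses of Theorem~\ref{thm:klinked}, and that its conclusion is at least as strong as what is used there.

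\emph{Step 1: the overall structure of \cite{DT}.} Their \cite[Proposition~2.1]{DT} gives a dichotomy for a digraph $G$ with $\delta(G)\geq(1+\eta)n$. Either $G$ is close to one of a few explicit extremal configurations, which they treat directly and without regularity, or $G$ contains a robust $(\nu,\tau)$-outexpander structure with linear minimum semidegree. In the second case there may be a small set of exceptional vertices of low semidegree. These are first absorbed into short oriented paths, whose endpoints are then joined up using the linkage theorem. So in the non-extremal case the oriented Hamilton cycle is produced as a union of boundedly many vertex-disjoint paths $P_1,\ldots,P_k$. Each $P_i$ copies a prescribed segment $Q_i$ of the target orientation and has prescribed endpoints $u_i,v_i$. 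This is exactly the output of Theorem~\ref{thm:klinked}.

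\emph{Step 2: matching parameters.} This is the main point to check. Theorem~\ref{thm:klinked} requires
\[
\beta\leq\tfrac{\nu^3}{64},\qquad \nu\leq\tau\leq\tfrac{\eta'}{2048},\qquad k\leq\beta^{-2},\qquad \ell_i\geq\beta^{-1},\qquad n\geq\exp(K_3\nu^{-40}),
\]
where $\eta'$ denotes the semidegree constant of the expander produced by \cite[Proposition~2.1]{DT}. We use the following monotonicity facts.
\begin{enumerate}
\item A robust $(\nu,\tau)$-outexpander is a robust $(\nu',\tau')$-outexpander whenever $\nu'\leq\nu$ and $\tau\leq\tau'\leq\frac12$. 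Hence we may shrink $\nu$ and enlarge $\tau$ as needed to place $\tau$ in $[\nu,\frac{\eta'}{2048}]$.
\item The numbers $k$ of paths and the lower bounds on their lengths in \cite{DT} are controlled by constants chosen below $\nu$ in their hierarchy. We may therefore choose $\beta$ small in terms of $\nu$ and then require the paths to have length at least $\beta^{-1}$ and to number at most $\beta^{-2}$.
\end{enumerate}
We expect the only genuine obstacle to be bookkeeping. The vertex set to which the linkage theorem is applied, after removing the exceptional vertices and the absorbing paths, must still be a robust outexpander with linear semidegree with slightly worse constants. The path lengths must also sum exactly to the number of remaining vertices. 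Both points are already handled in \cite{DT}, since their Theorem~3.5 has the same form. So we only need to confirm that the constants they obtain can be weakened into the ranges above.

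\emph{Step 3: quantitative bound.} Finally we track $n_0$. By \cite[Proposition~2.1]{DT} one may take $\frac1\nu\leq\exp(\exp(O(\frac1\eta)))$. The requirement $n\geq\exp(K_3\nu^{-40})$ from Theorem~\ref{thm:klinked} then gives $n_0\leq\exp(\exp(\exp(O(\frac1\eta))))$. All other parameter choices in \cite{DT} are polynomial and are dominated by this bound. This yields Corollary~\ref{thm:DT} without the regularity lemma.
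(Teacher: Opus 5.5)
Your proposal is correct and is essentially the paper's argument: substitute Theorem~\ref{thm:klinked} for \cite[Theorem~3.5]{DT}, keep the rest of the DeBiasio--Treglown proof unchanged, and read off $n_0\leq\exp(\exp(\exp(O(1/\eta))))$ from $\frac{1}{\nu}\leq\exp(\exp(O(1/\eta)))$ and $n\geq\exp(K_3\nu^{-40})$, exactly as in the paper's footnote. One small correction to Step~2: enlarging $\tau$ cannot help achieve $\tau\leq\frac{\eta'}{2048}$ (and $\nu\leq\tau$ already holds by definition), so that bound must come from the freedom in the DeBiasio--Treglown hierarchy to take $\tau$ small relative to $\eta'$, which their Theorem~3.5 already requires.
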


The same substitution in the proof of~\cite[Theorem~1.6]{DT}, with the
rest of their argument unchanged, gives a regularity-free proof of the
following pancyclicity generalization of Corollary~\ref{thm:DT}.

\begin{corollary}[\cite{DT}]\label{cor:DT-pancyclicity}
For all $\eta>0$, there exists a constant $n_0$ such that if $G$ is an
$n$-vertex digraph with $n\geq n_0$ and $\delta(G)\geq(1+\eta)n$, then,
for all $3\leq \ell\leq n$, the digraph $G$ contains every orientation
of a cycle of length $\ell$, except perhaps the directed cycle.
\end{corollary}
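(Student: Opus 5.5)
This follows the proof of Corollary~\ref{thm:DT} (the Hamilton cycle case) in \cite{DT}, run through the same substitution. I assume, as the paper states, that the only use of the regularity lemma in the proof of \cite[Theorem~1.6]{DT} is an application of the universal $k$-linkage result \cite[Theorem~3.5]{DT}. The plan is to reopen that argument and verify this concretely. I would then replace that application by Theorem~\ref{thm:klinked} and check that its hypotheses are met with the parameters \cite{DT} supplies.

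First, I would recall the structure of the proof of \cite[Theorem~1.6]{DT}. One splits into cycle lengths. For short cycles, $\ell$ bounded or at most a small linear function of $n$, the orientations are found directly by greedy, degree-based arguments that do not use regularity. For long cycles, one applies the dichotomy \cite[Proposition~2.1]{DT}: either $G$ is close to one of a few extremal configurations, handled by explicit regularity-free arguments, or $G$ contains a robust $(\nu,\tau)$-outexpander with linear minimum semidegree. In the robust case, a cycle of length $\ell<n$ is obtained as follows. Choose a suitable subset of $\ell$ vertices, or a subdigraph on $\ell$ vertices, that is itself a robust outexpander. Then embed the oriented cycle by splitting it into $k$ long paths $Q_i$ with prescribed endpoints, which requires the linkage theorem. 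Where \cite{DT} needs the expander property to pass to an $\ell$-vertex set, inheritance follows either from their own regularity-free arguments or from Theorem~\ref{thm:robust-inherit}. Theorem~\ref{thm:robust-inherit} gives a random $\ell$-subset $W$ for which $G[W]$ is a robust $(\rho,2\tau)$-outexpander with minimum semidegree at least $\eta\ell/2$, with high probability once $\ell$ exceeds a fixed multiple of $\log n$.

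Second, I would check the parameters. Theorem~\ref{thm:klinked} requires $\beta\le\nu^3/64$, $\tau\le\eta/2048$, $k\le\beta^{-2}$, path lengths at least $1/\beta$, and $n\ge\exp(K_3/\nu^{40})$. The constants in \cite{DT} are chosen via a hierarchy $1/n_0\ll\beta\ll\nu\ll\tau\ll\eta$, so these inequalities can be enforced by shrinking $\beta$ and $\nu$ as needed. When the linkage is applied to an $\ell$-vertex subdigraph with inherited parameters $(\rho,2\tau)$, I would replace $\nu$ by $\rho$ in these requirements. The only genuinely new constraint is then that $\ell\ge\exp(K_3/\rho^{40})$. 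Lengths below this threshold are constants depending only on $\eta$, so they fall into the short-cycle regime, which must be treated without linkage.

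The main obstacle I expect is exactly this boundary. The short-cycle argument of \cite{DT} may only cover $\ell$ up to some $\varepsilon n$, or a range defined through regularity constants, and I need to confirm that every $\ell$ below the new threshold $\exp(K_3/\rho^{40})$ is covered by their regularity-free short-cycle lemmas. If their short-cycle range is linear in $n$, this holds once $n_0$ is taken large enough. Failing that, I would handle the intermediate lengths directly by Theorem~\ref{thm:robust-inherit} applied with $m=\ell$ together with Theorem~\ref{thm:main}.
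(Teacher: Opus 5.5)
Your plan matches the paper's argument: keep the proof of \cite[Theorem~1.6]{DT} unchanged, except that its only regularity-based input, \cite[Theorem~3.5]{DT}, is replaced by Theorem~\ref{thm:klinked}, whose hypotheses can be met within the constant hierarchy of \cite{DT}. The ``boundary'' obstacle you anticipate does not arise on this route. It comes only from your optional detour through Theorem~\ref{thm:robust-inherit} with $\rho$ in place of $\nu$. On its own terms that detour leaves every $\ell$ below order $\log n$ uncovered, because the theorem's degree estimate is vacuous there, so your fallback via Theorem~\ref{thm:main} adds nothing. If instead you leave the argument of \cite{DT} intact, the explicit bound $\exp(K_3/\nu^{40})$ simply replaces the unspecified order threshold of their Theorem~3.5 wherever it is invoked, and no new range of $\ell$ needs handling.
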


\subsection{Overview}
\label{sec:overview}

Section~\ref{sec:motivation} explains how the inheritance problem arises
in the proof of H\"aggkvist and Thomason and how the container method
extends their argument. In Section~\ref{sec:robust-lemmas}, we collect
some facts about robust expansion and construct the short paths and
sorters used later.

In Section~\ref{sec:hamilton}, we use containers for empty ordered pairs
to prove the Hall-type inheritance theorem
(Theorem~\ref{thm:hall-inherit}). We then construct suitable pipelines
and adapt the H\"aggkvist--Thomason construction to prove universal
Hamilton-connectivity (Theorem~\ref{thm:main}).
Section~\ref{sec:linkage} extends the container argument to sparse pairs.
A weighted union bound gives the robust-inheritance theorem
(Theorem~\ref{thm:robust-inherit}), which allows us to adapt the
DeBiasio--Treglown argument and prove universal linkage
(Theorem~\ref{thm:klinked}).

Section~\ref{sec:concluding} discusses further applications and an
inheritance problem for sparse expanders.
Appendix~\ref{app:alternate-inheritance} gives two alternate proofs of
the inheritance conclusions, with different quantitative bounds. One
uses Frieze--Kannan weak regularity and the cut-norm sampling theorem
of Alon, Fernandez de la Vega, Kannan, and Karpinski~\cite{AFKK}.
The other uses the partition-testing theorem of Fischer, Matsliah,
and Shapira~\cite{FMS}.

\subsection{Notation}

All digraphs in this paper are finite and have no loops. They may contain
both directed edges $xy$ and $yx$; an \emph{oriented graph} is a digraph
in which, for all distinct vertices $x,y$, at most one of $xy$ and $yx$
is present. We write $V(G)$ and $E(G)$ for the vertex and edge sets of a
digraph $G$, set $|G|=|V(G)|$, and write $G[U]$ for the subdigraph induced
by $U\subseteq V(G)$. An assertion involving the symbol $\pm$ is understood to hold separately
with each choice of sign. A path of length $k$ has $k$ edges and $k+1$
vertices.

For $v\in V(G)$, let
$N_G^+(v)=\{w\in V(G):vw\in E(G)\}$ and $N_G^-(v)=\{w\in V(G):wv\in E(G)\}$.
Set $d_G^\pm(v)=|N_G^\pm(v)|$, $\delta^\pm(G)=\min_{v\in V(G)}d_G^\pm(v)$, and $\delta^0(G)=\min\{\delta^+(G),\delta^-(G)\}$.
The total degree of $v$ is
$d_G(v)=d_G^+(v)+d_G^-(v)$, and
$\delta(G)=\min_{v\in V(G)}d_G(v)$.
For $A\subseteq V(G)$, define
$N_G^\pm(A)=\bigcup_{v\in A}N_G^\pm(v)$. For sets $A,B\subseteq V(G)$, let $e_G(A,B)$ denote the number of directed
edges from $A$ to $B$.
We omit the subscript $G$ when the ambient
digraph is clear.

If $V$ is a finite set, then $\binom{V}{s}$ denotes
the family of its $s$-element subsets. All logarithms are natural unless a
base is displayed explicitly. A sequence of events holds \emph{with high
probability}, or \emph{whp}, if its probability tends to one as
$n\to\infty$. We omit floors and ceilings when the resulting rounding
error is harmless, and retain them whenever they affect a one-sided
inequality or an explicit bound.

\section{Motivation from H\"aggkvist and Thomason}
\label{sec:motivation}

The following passages explain why Theorem~\ref{thm:hall-inherit} is the
main new ingredient needed for Theorem~\ref{thm:main}. H\"aggkvist and
Thomason observe that once a suitable pipeline has been found, the rest of
their proof is already in place. We give the resulting adaptation in
Section~\ref{sec:HT}. The emphasis is mine.

From the first two paragraphs of Section~6 on page~347 of~\cite{HT}:
\begin{quote}
Our aim in this section is to show how to find a pipeline within an
oriented graph. In fact, we shall show that a randomly chosen sequence of
sets $S_i$ form a pipeline. This cannot happen if
$\delta^0(G)<(\frac{3}{8}-\varepsilon)|G|$, since examples of such graphs exist
which are not expanders, and in that case a randomly chosen pair of subsets
is very unlikely to be $\lambda$-expanding. \emph{We believe that if
$\delta^0(G)>(\frac{3}{8}+\varepsilon)|G|$, in which case the graph is an expander
(by Lemma 1), it is likely that a randomly chosen sequence of sets will
yield a pipeline. Given such a pipeline, the machinery for proving our main
theorem for graphs with this value of $\delta^0$ is all in place.}

\emph{However, our present proof that randomly chosen sets form matched pairs
(and so, by Theorem 6, a pipeline) does not make full use of the expansion
properties of $G$.} Rather, we achieve the required effect by using only
properties of the neighbourhoods of pairs of vertices. In consequence, our
proof works only for $\delta^0(G)>(\frac{5}{12}+\varepsilon)|G|$. We hope to have
the opportunity to repair this deficiency in the future.
\end{quote}

From the first paragraph of page~340 of~\cite{HT}:
\begin{quote}
The proof of our theorem is based on the expansion properties of a graph
with large minimum degree. As such, \emph{we expect that the machinery developed... could be used to prove similar results for
any directed graph having a suitable expansion property...} It is our intention to explore
elsewhere a possible extension to a more general context.
\end{quote}

The second passage also anticipates the extension to digraphs with a general
expansion hypothesis in Theorem~\ref{thm:main}.

Our proof of Theorem~\ref{thm:hall-inherit} can be viewed as an extension of
the certificate argument in~\cite[Lemma~8]{HT}. Under their stronger
semidegree hypothesis, H\"aggkvist and Thomason show that a failure of
expansion can be certified by at most two vertices. They bound the
probability that a random pair of sets fails to expand by taking a union
bound over these certificates.

For a general robust expander, there need not be such a two-vertex
certificate. We instead use the Kleitman--Winston algorithm~\cite{KW},
selecting the first vertex in a fixed
order whose current degree is at least a prescribed threshold.
It assigns every empty pair arising from a
failure of expansion to a small fingerprint and one of polynomially many
container pairs. These fingerprints play the same role as the certificates in the
H\"aggkvist--Thomason proof: they reduce the possible obstructions to a
family over which a union bound is effective. Graph-container methods
were systematically developed by Sapozhenko~\cite{Sapozhenko}, who in particular coined the term ``container.'' The general hypergraph-container
method was developed independently by Balogh, Morris, and
Samotij~\cite{BMS} and by Saxton and
Thomason~\cite{ST}.

\section{Lemmas related to robust expanstion}
\label{sec:robust-lemmas}

The following elementary consequence of robust expansion is implicit in
the proof of~\cite[Lemma~5.2]{KO}. We include a proof because the statement
in~\cite{KO} is not given in exactly this form.

\begin{lemma}\label{lem:density}
Let $0<\nu\leq\tau\leq\frac{\eta}{2}<\frac{1}{2}$, and let $G$ be an $n$-vertex robust
$(\nu,\tau)$-outexpander satisfying
$\dmin(G)\geq\eta n$.
If $P,Q\subseteq V(G)$ satisfy $|P|,|Q|\geq\tau n$ and
$|P|+|Q|\geq(1-\frac{\nu}{2})n$, then
$e_G(P,Q)\geq\frac{\nu^2n^2}{4}$.
\end{lemma}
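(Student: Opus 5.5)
The plan is to split according to whether $P$ is small enough for the robust expansion hypothesis to apply to $P$ itself. If it is not, $P$ is almost everything, and the minimum semidegree finishes the job. Only the size condition on $P$ needs care; no bound on $|Q|$ from above is needed.

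\emph{Case 1: $\tau n\leq |P|\leq(1-\tau)n$.} Apply the definition of robust $(\nu,\tau)$-outexpander with $X=P$. This gives $|\RN_\nu(P)|\geq |P|+\nu n$. Since $|V(G)\setminus Q|=n-|Q|$, we get
\[
|\RN_\nu(P)\cap Q|\geq |P|+\nu n-(n-|Q|)\geq \nu n-\frac{\nu n}{2}=\frac{\nu n}{2},
\]
using $|P|+|Q|\geq(1-\frac{\nu}{2})n$. Every vertex $v\in\RN_\nu(P)\cap Q$ satisfies $|N^-(v)\cap P|\geq\nu n$, so it receives at least $\nu n$ edges from $P$. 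Summing over these vertices gives
\[
e_G(P,Q)\geq \frac{\nu n}{2}\cdot\nu n=\frac{\nu^2n^2}{2}\geq \frac{\nu^2n^2}{4}.
\]

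\emph{Case 2: $|P|>(1-\tau)n$.} Here robust expansion cannot be applied to $P$ directly, and I will use the semidegree condition instead. Every $v\in Q$ has $d^-(v)\geq\eta n$, and fewer than $\tau n$ vertices lie outside $P$. Hence
\[
|N^-(v)\cap P|> \eta n-\tau n\geq \frac{\eta n}{2}\geq \tau n\geq\nu n,
\]
using $\tau\leq\frac{\eta}{2}$ twice and $\nu\leq\tau$. Summing over $v\in Q$ and using $|Q|\geq\tau n\geq\nu n$ gives
\[
e_G(P,Q)\geq |Q|\cdot\nu n\geq \nu^2n^2.
\]

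The case $|P|<\tau n$ is excluded by hypothesis, so these two cases cover everything. The only potential obstacle is the large-$P$ case, where robust expansion is unavailable. One could instead pass to a subset of $P$ of size $\lfloor(1-\tau)n\rfloor$, but that introduces rounding losses. The route above through $\dmin(G)\geq\eta n$ together with $\tau\leq\eta/2$ avoids them entirely.
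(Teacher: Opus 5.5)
Your proof is correct and matches the paper's argument. Both split at $|P|\leq(1-\tau)n$, apply robust outexpansion to $P$ in the first case, and in the second case use the semidegree bound $|N^-(v)\cap P|>(\eta-\tau)n$ for $v\in Q$ to get at least $\nu^2n^2$ edges. The one difference is that you argue the first case directly, which gives the slightly stronger bound $\frac{\nu^2n^2}{2}$, whereas the paper phrases that case as a contradiction.
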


\begin{proof}
Suppose first that $\tau n\leq |P|\leq(1-\tau)n$.
Every vertex in $\RN_\nu(P)$ receives at least $\nu n$ edges from $P$.
Consequently,
$|Q\cap\RN_\nu(P)|\leq \frac{e_G(P,Q)}{\nu n}$.
If $e_G(P,Q)<\frac{\nu^2n^2}{4}$, robust outexpansion gives
$$
 |Q|
\leq n-|\RN_\nu(P)|+|Q\cap\RN_\nu(P)|
<n-|P|-\nu n+\frac{\nu}{4}n
\leq n-|P|-\frac{3\nu}{4}n,
$$
contrary to $|P|+|Q|\geq(1-\frac{\nu}{2})n$.

It remains to consider $|P|>(1-\tau)n$. Every vertex of $Q$ has at least
$\eta n$ inneighbors in $G$, while fewer than $\tau n$ vertices lie
outside $P$. Hence
$$
 e_G(P,Q)
 \geq |Q|(\eta-\tau)n
 \geq\tau(\eta-\tau)n^2
 \geq\nu^2n^2.
$$
The last inequality follows from
$\eta-\tau\geq\tau\geq\nu$.
\end{proof}

For $X\subseteq V(G)$ and $\mu>0$, define the \emph{$\mu$-robust
inneighborhood} of $X$ by
$RN^-_\mu(X)=\{v\in V(G):|N^+(v)\cap X|\geq\mu n\}$.
We say that $G$ is a \emph{robust $(\mu,\sigma)$-inexpander} if
$|RN^-_\mu(X)|\geq|X|+\mu n$ whenever
$\sigma n\leq|X|\leq(1-\sigma)n$. A digraph which is both a robust
$(\mu,\sigma)$-outexpander and a robust $(\mu,\sigma)$-inexpander is a
\emph{robust $(\mu,\sigma)$-diexpander}.

The following result, due to DeBiasio and communicated to Taylor, appears
without explicit quantification in~\cite[Proposition~48]{Taylor}.

\begin{lemma}\label{lem:inexpansion}
Let $0<\nu\leq\tau\leq\frac{\eta}{2}<\frac{1}{2}$. If $G$ is an $n$-vertex robust
$(\nu,\tau)$-outexpander satisfying
$\dmin(G)\geq\eta n$, then $G$ is a robust
$(\nu^3,2\tau)$-inexpander.
\end{lemma}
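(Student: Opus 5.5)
Fix $X\subseteq V(G)$ with $2\tau n\leq|X|\leq(1-2\tau)n$ and write $Y=RN^-_{\nu^3}(X)$. We must show $|Y|\geq|X|+\nu^3 n$. Suppose instead that $|Y|<|X|+\nu^3n$. The plan is to apply Lemma~\ref{lem:density} to a pair of sets that it forces to span many edges, and then show that $Y$ being small makes this impossible.

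Take $P=V(G)\setminus Y$ and $Q=X$. By the definition of $Y$, every vertex of $P$ has fewer than $\nu^3 n$ outneighbours in $X$. Hence
\[
e_G(P,X)<\nu^3n^2\leq\tfrac{\nu^2n^2}{4}\qquad\text{once }\nu\leq\tfrac14 .
\]
We may assume this since $\nu\leq\tau<\tfrac14$. The size conditions of Lemma~\ref{lem:density} need checking. First, $|Q|=|X|\geq 2\tau n\geq\tau n$. Next,
\[
|P|=n-|Y|>n-|X|-\nu^3n\geq 2\tau n-\nu^3n\geq\tau n .
\]
Finally,
\[
|P|+|Q|>n-\nu^3 n\geq(1-\tfrac{\nu}{2})n .
\]
So Lemma~\ref{lem:density} gives $e_G(P,Q)\geq\nu^2n^2/4$, which contradicts the bound above. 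Therefore $|Y|\geq|X|+\nu^3n$, which is exactly the required inexpansion.

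I don't see a step where this really strains, and the slack in the parameters ($\nu^3$ versus $\nu^2/4$, $2\tau$ versus $\tau$) looks like it exists precisely so that this one-shot application works. Two details need care. The first is the strict versus non-strict threshold in $RN^-$: vertices outside $Y$ have strictly fewer than $\nu^3 n$ outneighbours in $X$, which is what gives the strict edge bound. The second is confirming $\nu^3\leq\nu^2/4$ from the standing hypotheses.

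If the density lemma turned out not to apply directly, I would fall back on a more hands-on argument, which I consider the harder route. It would apply robust outexpansion to $V\setminus Y$. It would then use the minimum-indegree condition $\delta^-(G)\geq\eta n$ on vertices of $X$ to count edges into $X$, and derive the contradiction by double counting edges from $V\setminus Y$ into $X$.
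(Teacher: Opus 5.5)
Your argument is correct, but it follows a different route from the paper. The paper does not invoke Lemma~\ref{lem:density}. It first double counts edges into $X$ using $\delta^-(G)\geq\eta n$ to show that $R=RN^-_{\nu^3}(X)$ satisfies $|R|>\frac{\eta n}{2}$. This places $T=V(G)\setminus R$ in the window $\tau n<|T|<(1-\tau)n$. It then applies robust outexpansion to $T$ directly: $e_G(T,X)<\nu^3n|T|$ forces $|X\cap RN^+_\nu(T)|<\nu^2n$, which gives $|RN^+_\nu(T)|<|T|+\nu n$. That is essentially your fallback argument.

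Your main route packages both ingredients into Lemma~\ref{lem:density}. When $V(G)\setminus Y$ is very large, the minimum-semidegree case inside that lemma handles it, so you never need the preliminary lower bound on $|Y|$, and the proof reduces to a size check. The paper's version is self-contained and makes the role of the degree condition explicit. Yours is shorter and reuses a lemma that is needed later anyway.

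One justification should be corrected. The bound $\tau<\frac14$ is not a hypothesis of the lemma: $\tau\leq\frac{\eta}{2}<\frac12$ only gives $\tau<\frac12$. For instance, a large complete digraph satisfies the hypotheses with $\nu=\frac25$, $\tau=\frac{9}{20}$, $\eta=\frac{9}{10}$. What you actually need follows from having fixed $X$ with $2\tau n\leq|X|\leq(1-2\tau)n$, which forces $4\tau\leq1$. Hence $\nu\leq\tau\leq\frac14$; this is non-strict, but it is enough for $\nu^3\leq\frac{\nu^2}{4}$. For $\tau>\frac14$ the statement is vacuous.
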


\begin{proof}
Fix $X\subseteq V(G)$ with
$2\tau n\leq|X|\leq(1-2\tau)n$, and set
$R=RN^-_{\nu^3}(X)$. Since
$e_G(V(G),X)\geq\eta n|X|$, while
$e_G(V(G),X)\leq |R||X|+\nu^3n^2$, we have $|R|>\frac{\eta n}{2}$.
Indeed, otherwise the upper bound would be at most
$\frac{\eta n|X|}{2}+\nu^3n^2<\eta n|X|$; here
$\nu^3\leq\nu\tau\leq\frac{\eta\tau}{2}\leq\frac{\eta|X|}{4n}$,
where we used $|X|\geq2\tau n$.

If $|R|<|X|+\nu^3n$ and $T=V(G)\setminus R$, then
$$
 |T|>n-|X|-\nu^3n>\tau n
 ~\text{and}~
 |T|<n-\frac{\eta n}{2}\leq(1-\tau)n,
$$
so robust outexpansion applies to $T$. On the
other hand, $e_G(T,X)<|T|\nu^3n$, and hence
$|X\cap\RN_\nu(T)|<|T|\nu^2\leq\nu^2n$. Consequently,
$$
 |\RN_\nu(T)|
 <n-|X|+\nu^2n
 <n-|X|-\nu^3n+\nu n
 <|T|+\nu n,
$$
contrary to robust outexpansion. Here the middle inequality follows from
$\nu+\nu^2<1$.
\end{proof}

The expansion assertion in the following lemma is
\cite[Proposition~35]{Taylor}.

\begin{lemma}\label{lem:bounded-deletion}
Let $0<\nu\leq\tau<\frac{1}{4}$, and let $G$ be an $n$-vertex robust
$(\nu,\tau)$-outexpander. If $X\subseteq V(G)$ satisfies
$|X|\leq\frac{\nu n}{4}$, then $G-X$ is a robust
$(\frac{\nu}{2},2\tau)$-outexpander. The same statement holds for robust
inexpanders and robust diexpanders. Moreover, if
$\eta\geq\nu$ and $\delta^0(G)\geq\eta n$, then
$\delta^0(G-X)\geq\frac{3\eta}{4}|G-X|$.
\end{lemma}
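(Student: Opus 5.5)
Write $G'=G-X$ and $n'=|G'|=n-|X|$, so that $(1-\frac{\nu}{4})n\leq n'\leq n$. The plan is to verify the definition of a robust $(\frac{\nu}{2},2\tau)$-outexpander directly for $G'$, reading robust neighborhoods in $G'$ relative to $n'$. Fix $Y\subseteq V(G')$ with $2\tau n'\leq|Y|\leq(1-2\tau)n'$.

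First check that $Y$ lies in the range where the expansion of $G$ applies. We have $|Y|\geq 2\tau n'\geq2\tau(1-\frac{\nu}{4})n\geq\tau n$. Also $|Y|\leq(1-2\tau)n'\leq(1-2\tau)n\leq(1-\tau)n$. Hence $|\RN_{\nu}(Y)|\geq|Y|+\nu n$ in $G$.

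Next compare robust neighborhoods in $G$ and in $G'$. Let $v\in\RN_{\nu,G}(Y)\setminus X$. Since $Y\cap X=\emptyset$, the vertex $v$ keeps all its at least $\nu n$ inneighbors in $Y$ within $G'$. Moreover $\nu n\geq\nu n'\geq\frac{\nu}{2}n'$, so $v\in\RN_{\nu/2,G'}(Y)$. It follows that
\[|\RN_{\nu/2,G'}(Y)|\geq|Y|+\nu n-|X|\geq|Y|+\tfrac{3\nu}{4}n\geq|Y|+\tfrac{\nu}{2}n'.\]
This is the required outexpansion of $G'$.

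For inexpanders the same argument works with edge directions reversed, using out-neighbors in $Y$ in place of in-neighbors. The diexpander statement then follows by applying both arguments. The definition of robust inexpander has the same shape, with threshold $\mu n$ and the same range condition, so no new ingredient is needed.

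For the degree bound, every vertex of $G'$ loses at most $|X|\leq\frac{\nu n}{4}\leq\frac{\eta n}{4}$ neighbors of each direction. So $\delta^0(G')\geq\frac{3\eta}{4}n\geq\frac{3\eta}{4}|G'|$.

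There is no real obstacle here. The only care needed is in the normalizations: the thresholds must be checked relative to $n'$ rather than $n$, and $\tau<\frac14$ together with $\nu\leq\tau$ is exactly what guarantees $2\tau(1-\frac{\nu}{4})\geq\tau$. If the intended convention measures robust neighborhoods in $G'$ relative to $n$ instead of $n'$, the same bounds hold with room to spare.
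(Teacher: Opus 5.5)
Your proof is correct and follows the paper's argument essentially line by line. You check that the size range for $G-X$, normalized by $|G-X|$, lies inside $[\tau n,(1-\tau)n]$; you observe that $\RN_{\nu,G}(Y)\setminus X\subseteq \RN_{\nu/2,G-X}(Y)$, which gives $|Y|+\nu n-|X|\geq|Y|+\frac{\nu}{2}|G-X|$; you reverse edge directions for inexpanders; and you note that each semidegree drops by at most $|X|\leq\frac{\eta n}{4}$. The paper does the range check using only $|G-X|\geq\frac{n}{2}$, which is all your inequality $2\tau(1-\frac{\nu}{4})\geq\tau$ actually requires, so that inequality is not ``exactly'' what $\tau<\frac14$ is needed for.
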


\begin{proof}
Let $H=G-X$ and $N=|H|$. Since $N\geq\frac{n}{2}$, every set
$S\subseteq V(H)$ with $2\tau N\leq|S|\leq(1-2\tau)N$ satisfies
$\tau n\leq|S|\leq(1-\tau)n$. Hence
$$
 |\RN_{\nu/2,H}(S)|
 \geq|\RN_{\nu,G}(S)|-|X|
 \geq|S|+\nu n-|X|
 \geq|S|+\frac{\nu N}{2}.
$$
Reversing all edge directions proves the assertion for inexpanders,
and the diexpander assertion follows by applying both conclusions.
If $\delta^0(G)\geq\eta n$ and $\eta\geq\nu$, then
$\delta^0(H)\geq\eta n-|X|\geq\frac{3\eta n}{4}
\geq\frac{3\eta N}{4}$.
\end{proof}

The next lemma is an explicitly quantified version of
\cite[Lemma~58]{Taylor} (see also \cite[Lemma~5.1]{DT}) which gives short paths of every orientation between every pair of vertices. We also allow a
set of forbidden vertices and several copies with pairwise vertex-disjoint
interiors.

\begin{lemma}\label{lem:fixed-endpoint-paths}
Let $0<\nu\leq\tau\leq\frac{\eta}{8}<\frac{1}{8}$ and
$\frac{2}{\nu^3}\leq k\leq\frac{\nu^3n}{4}$.
If $G$ is an $n$-vertex robust $(\nu,\tau)$-outexpander satisfying
$\dmin(G)\geq\eta n$, then, for all distinct vertices
$x,y\in V(G)$ and all oriented paths $P$ of length $k$, there exists a copy
of $P$ in $G$ whose first and last vertices are $x$ and $y$, respectively.
More generally, if $D\subseteq V(G)\setminus\{x,y\}$ and
$q\in\mathbb N$ satisfy $|D|+qk\leq\frac{\nu^3n}{4}$,
then $G-D$ contains $q$ copies of $P$ from $x$ to $y$ whose interiors
are pairwise vertex-disjoint.
\end{lemma}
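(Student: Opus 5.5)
We will build the path greedily from both ends, using robust expansion to show that the set of vertices reachable from $x$ along a prefix of $P$ quickly becomes large. The first step is to reduce to the case $q=1$. Given $q$ copies, we find them one at a time, each time adding the interior of the copies already found to the forbidden set $D$. Since $|D|+qk\leq\frac{\nu^3n}{4}$, the forbidden set always has size at most $\frac{\nu^3 n}{4}\leq\frac{\nu n}{4}$. By Lemma~\ref{lem:bounded-deletion}, $H=G-D$ is then a robust $(\frac{\nu}{2},2\tau)$-outexpander with $\dmin(H)\geq\frac{3\eta}{4}|H|$. By Lemma~\ref{lem:inexpansion} it is also a robust $(\nu^3/8,4\tau)$-inexpander, possibly after adjusting the parameter conditions; the hypothesis $\tau\leq\eta/8$ leaves room for this. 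So it suffices to find one copy of $P$ from $x$ to $y$ in a robust diexpander $H$ with linear semidegree, while also avoiding a further small set of at most $\frac{\nu^3 n}{4}$ vertices.

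Next, write $P=p_0p_1\cdots p_k$ and split it at a middle index $j$, roughly $k/2$. For each $i\leq j$, let $A_i$ be the set of vertices $v$ such that some copy of $p_0\cdots p_i$ in $H$ starts at $x$ and ends at $v$. The edge $p_ip_{i+1}$ is either forward or backward, so $A_{i+1}$ contains the robust out- or in-neighbourhood of $A_i$. Here I would strengthen $A_i$ to the set of endpoints reachable by many paths, so that the set of forbidden vertices can be excluded. In the same way, define $B_i$ from $y$ backwards along $p_k p_{k-1}\cdots$. Three things hold:
\begin{enumerate}
\item $|A_1|\geq\frac{3\eta}{4}|H|\geq\tau n$ by the semidegree condition;
\item while $|A_i|\leq(1-\tau')n$, each step adds at least $\nu^3 n/8$ vertices, by out- or inexpansion as appropriate;
\item after at most $8/\nu^3$ steps, $|A_i|>(1-\tau')n$, and this remains true.
\end{enumerate}
Since $k\geq 2/\nu^3$, the constants need some care; I would instead grow only until $|A_i|+|B_{k-i-1}|>n$ plus slack, which needs about $1/\nu^3$ steps from each side. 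For the remaining steps, a large set has a large robust neighbourhood, which I would keep using to stay large.

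Finally, we join the two halves across a single middle edge. Take $A=A_j$ and $B=B_{k-j-1}$ with $|A|+|B|\geq(1-\frac{\nu}{2})n$. Lemma~\ref{lem:density}, applied in the direction of the edge $p_jp_{j+1}$, gives $\Omega(\nu^2n^2)$ edges between $A$ and $B$ in the correct direction. We then pick such an edge $ab$ for which the prefix path to $a$ and the suffix path from $b$ are vertex-disjoint. This disjointness is the main obstacle: the reachable-set argument does not by itself guarantee that the two halves, or even the vertices within one half, are distinct. My first attempt would be to use a robust version of $A_i$, namely vertices that are the endpoints of at least $c_i n^{i-1}$ labelled walks avoiding $D$, and then count. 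The number of pairs of walks that collide is lower order, of order $k\cdot n^{i-2}$, compared with $\nu^{O(k)}n^{i-1}$. This requires $n$ to be large relative to $\nu^{-O(k)}$, which is fine for $k$ of order $\nu^{-3}$ but not for $k$ up to $\nu^3n/4$. For long $k$, I would first realise a short segment of length about $2/\nu^3$ by this method, and extend the remainder greedily. We can do this by choosing the middle index so that the long stretch is built first as an arbitrary oriented path that avoids a reserved set, using the semidegree condition $\dmin\geq\eta n$ and greedy extension. Each next vertex then has at least $\eta n-k-|D|>0$ choices. This leaves a short bridging segment that is handled by the reachable-set argument above.
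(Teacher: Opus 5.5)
\textbf{There is a genuine gap, in the step you flag as the main obstacle.} Your skeleton matches the paper's: reduce to $q=1$ by enlarging the forbidden set, build a long prefix greedily from $x$ while reserving $y$, then bridge the last $\Theta(\nu^{-3})$ edges with iterated robust neighbourhoods in the remaining digraph $H$ of order $N$. But disjointness is never actually secured.

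Your walk-counting fix needs the $(\beta N)^{r-1}$ good walks, with $\beta=\Theta(\nu^3)$ and $r=\Theta(\nu^{-3})$, to outnumber the $O(r^2N^{r-2})$ degenerate ones. That requires $N\geq r^2\beta^{-(r-1)}=\exp(\Omega(\nu^{-3}\log\frac{1}{\nu}))$. The lemma assumes nothing about $n$ beyond $\frac{2}{\nu^3}\leq k\leq\frac{\nu^3n}{4}$, so $n$ may be as small as $\frac{8}{\nu^6}$, and your argument does not cover that range.

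The missing idea is that disjointness is automatic if the layers are exactly the robust neighbourhoods and the path is chosen \emph{backwards}:
\begin{enumerate}
\item Let $A_1$ be the prescribed neighbourhood of the prefix endpoint $z$ in $H$, and set $A_{i+1}=RN^{\pm}_{\beta,H}(A_i)$. Then every vertex of $A_{i+1}$ has at least $\beta N$ admissible predecessors in $A_i$.
\item Grow from $z$ alone until $|A_{r-1}|\geq(1-\gamma+\beta)N$.
\item The prescribed neighbourhood of $y$ has order at least $\gamma N$, so it meets $A_{r-1}$ in at least $\beta N$ vertices. This supplies the penultimate vertex.
\item Choose predecessors in $A_{r-2},\ldots,A_1$ one at a time, avoiding $z$, $y$ and the at most $r$ vertices already chosen. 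This succeeds because $\beta N>\frac{\nu^3n}{4}\geq k\geq r$, which the hypotheses already give.
\end{enumerate}
No second reachable set, no appeal to Lemma~\ref{lem:density}, and no disjointness of two halves is needed. The semidegree of $y$ plays the role of your set $B$.

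\textbf{Your constants also need repair.} Applying Lemma~\ref{lem:bounded-deletion} before Lemma~\ref{lem:inexpansion} leaves the in-expansion parameter at only $\frac{\nu^3}{8}$. Take the split at $j\approx\frac{k}{2}$, $k$ as small as $\frac{2}{\nu^3}$, and a path whose first half consists of backward edges and whose second half consists of forward edges. Then every growth step on both sides uses in-expansion. In the worst case the two sets gain only about $\frac{N}{4}$ in total and need not reach $|A|+|B|\geq(1-\frac{\nu}{2})n$ unless $\eta$ is large; each side would need up to about $\frac{4}{\nu^3}$ steps, not $\frac{1}{\nu^3}$. The weak parameter also breaks the backward-choice count near the extremes of the allowed range.

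The paper avoids this by applying Lemma~\ref{lem:inexpansion} to $G$ itself, so $G$ is a robust $(\nu^3,2\tau)$-diexpander. Only then does it delete at most $\frac{\nu^3n}{4}$ vertices, which gives a robust $(\beta,4\tau)$-diexpander with $\beta=\frac{\nu^3}{2}$. Now $r=\lceil\frac{1}{\beta}\rceil\leq k$ steps of one-sided growth from $\gamma N$, where $\gamma=\frac{3\eta}{4}$, suffice. This is exactly what the hypothesis $k\geq\frac{2}{\nu^3}$ is calibrated for.
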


\begin{proof}
By Lemma~\ref{lem:inexpansion}, $G$ is a robust
$(\nu^3,2\tau)$-diexpander. Set $\beta=\frac{\nu^3}{2}$ and
$r=\lceil\frac{1}{\beta}\rceil\leq k$, and write the vertices of $P$,
in their underlying order, as $p_0,p_1,\ldots,p_k$.

Let $Z\subseteq V(G)\setminus\{x,y\}$ satisfy
$|Z|+k\leq\frac{\nu^3n}{4}$. We construct a copy of $P$ from $x$ to $y$
avoiding $Z$. Starting at $x$, embed $p_0p_1\cdots p_{k-r}$ greedily in
$G-Z$, always reserving $y$. This is possible since at each step the
required in- or outneighborhood has order at least $\eta n$, whereas
at most $|Z|+k\leq\frac{\nu^3n}{4}<\eta n$ vertices are unavailable.
Let $z$ be the image
of $p_{k-r}$. Delete $Z$ and all vertices of this initial segment except
$z$, leaving a digraph $H$ of order $N>\frac{n}{2}$. At most
$\frac{\nu^3n}{4}$ vertices have been deleted, so
Lemma~\ref{lem:bounded-deletion}, applied with expansion parameters
$\nu^3$ and $2\tau$, shows that $H$ is a robust
$(\beta,4\tau)$-diexpander with $\delta^0(H)\geq\gamma N$, where
$\gamma=\frac{3\eta}{4}$. We have $\gamma>4\tau+\beta$ and
$\beta N>k\geq r$.

We embed the remaining $r$ edges as in \cite[Lemma~58]{Taylor}.
Let $A_1$ be the in- or outneighborhood of
$z$ in $H$ prescribed by the edge $p_{k-r}p_{k-r+1}$. For
$1\leq i\leq r-2$, let $A_{i+1}=RN^+_{\beta,H}(A_i)$ if the edge
$p_{k-r+i}p_{k-r+i+1}$ is forward, and let
$A_{i+1}=RN^-_{\beta,H}(A_i)$ otherwise. Every vertex of $A_{i+1}$ has
at least $\beta N$ choices for the preceding vertex in $A_i$.

We have $|A_1|\geq\gamma N>4\tau N$. Whenever
$4\tau N\leq|A_i|\leq(1-4\tau)N$, robust diexpansion gives
$|A_{i+1}|\geq|A_i|+\beta N$. If $|A_i|>(1-4\tau)N$, every vertex of
$H$ has more than $(\gamma-4\tau)N>\beta N$ appropriate neighbors in
$A_i$, so $A_{i+1}=V(H)$. Consequently,
$$
 |A_{r-1}|\geq\min\{N,(\gamma+(r-2)\beta)N\}
 \geq(1-\gamma+\beta)N,
$$
where the last inequality follows from $r\beta\geq1$ and
$\gamma\geq2\beta$.

The appropriate in- or outneighborhood of $y$ for the final edge of $P$
has order at least $\gamma N$, so at least $\beta N$ vertices of
$A_{r-1}$ can be the penultimate vertex. Choose one distinct from $z$
and $y$, and choose the preceding vertices backwards through
$A_{r-2},\ldots,A_1$, always avoiding $z$, $y$, and the vertices already
chosen. At most $r$ vertices are excluded at any step, while at least
$\beta N>r$ choices are available. This gives a path from $z$ to $y$
with the required orientation, which together with the initial segment
gives a copy of $P$ avoiding $Z$.

For the more general assertion, start with $Z=D$ and successively add
the interiors of the completed copies to $Z$. Before the $j$th copy,
$|Z|+k\leq|D|+jk\leq|D|+qk\leq\frac{\nu^3n}{4}$, so the construction
applies at every step. The first assertion follows by taking
$D=\varnothing$ and $q=1$.
\end{proof}

\begin{definition}[$(s,A)$-sorter]
Let $s\in\mathbb N$ and let $A$ be an oriented path. An
\emph{$(s,A)$-sorter} is a digraph on $s\cdot |V(A)|$ vertices with designated input vertices
$a_1,\ldots,a_s$ and output vertices $a'_1,\ldots,a'_s$ such that, for
all permutations $\pi$ of $[s]$, its vertex set can be partitioned into
$s$ vertex-disjoint copies of $A$ joining $a_i$ to $a'_{\pi(i)}$, for all $i\in[s]$.
\end{definition}

We recall the sorting networks used in \cite[Section~4]{HT}. A
\emph{comparator} compares the labels in two positions and sends the
smaller label to the lower-numbered position and the larger label to the
other. A \emph{sorting network} on $s$ positions is a fixed sequence of
stages, each consisting of comparators on disjoint pairs of positions,
which sorts all permutations of $[s]$. Positions not used by a comparator
retain their labels. Batcher~\cite{Batcher} constructed such networks
with at most $\lceil(\log_2s)^2\rceil$ stages. If we allow each comparator
either to exchange its two labels or to leave them in place, the network
can realize any permutation $\pi$: assign label $\pi(i)$ to input $i$
and sort, so that this label reaches output $\pi(i)$.

\begin{figure}[htbp]
\centering
\begin{tikzpicture}[
  x=0.78cm,
  y=0.67cm,
  edge/.style={gray!65,line width=0.55pt,-{Stealth[length=1.45mm]}},
  edashed/.style={gray!65,dashed,line width=0.55pt,-{Stealth[length=1.45mm]}},
  route/.style={reviewblue,line width=1.25pt,-{Stealth[length=1.7mm]}},
  vertex/.style={circle,draw=black,fill=white,line width=0.55pt,
                 inner sep=1.25pt}
]
  \foreach \j in {0,...,9} {
    \foreach \i in {1,2,3,4} {
      \pgfmathsetmacro{\yy}{4-\i}
      \coordinate (v\j-\i) at (\j,\yy);
    }
  }

  \foreach \i in {1,2,3,4} {
    \draw[edge] (v0-\i)--(v1-\i);
    \draw[edge] (v2-\i)--(v1-\i);
  }
  \foreach \a in {1,2} {
    \foreach \b in {1,2} {\draw[edge] (v2-\a)--(v3-\b);}
  }
  \foreach \a in {3,4} {
    \foreach \b in {3,4} {\draw[edge] (v2-\a)--(v3-\b);}
  }

  \foreach \i in {1,2,3,4} {
    \draw[edge] (v4-\i)--(v3-\i);
    \draw[edge] (v4-\i)--(v5-\i);
  }
  \foreach \a in {1,3} {
    \foreach \b in {1,3} {\draw[edge] (v6-\a)--(v5-\b);}
  }
  \foreach \a in {2,4} {
    \foreach \b in {2,4} {\draw[edashed] (v6-\a)--(v5-\b);}
  }

  \foreach \i in {1,2,3,4} {
    \draw[edge] (v6-\i)--(v7-\i);
    \draw[edge] (v8-\i)--(v7-\i);
  }
  \draw[edge] (v8-1)--(v9-1);
  \draw[edge] (v8-4)--(v9-4);
  \foreach \a in {2,3} {
    \foreach \b in {2,3} {\draw[edge] (v8-\a)--(v9-\b);}
  }

  \foreach \i in {1,2,3,4} {
    \draw[route] (v0-\i)--(v1-\i);
    \draw[route] (v2-\i)--(v1-\i);
  }
  \draw[route] (v2-1)--(v3-2);
  \draw[route] (v2-2)--(v3-1);
  \draw[route] (v2-3)--(v3-4);
  \draw[route] (v2-4)--(v3-3);
  \foreach \i in {1,2,3,4} {
    \draw[route] (v4-\i)--(v3-\i);
    \draw[route] (v4-\i)--(v5-\i);
  }
  \draw[route] (v6-1)--(v5-3);
  \draw[route] (v6-3)--(v5-1);
  \draw[route] (v6-2)--(v5-4);
  \draw[route] (v6-4)--(v5-2);
  \foreach \i in {1,2,3,4} {
    \draw[route] (v6-\i)--(v7-\i);
    \draw[route] (v8-\i)--(v7-\i);
    \draw[route] (v8-\i)--(v9-\i);
  }
  \foreach \j in {0,...,9} {
    \foreach \i in {1,2,3,4} {\node[vertex] at (v\j-\i) {};}
  }
  \foreach \i in {1,2,3,4} {
    \pgfmathsetmacro{\yy}{4-\i}
    \node[anchor=east,font=\small] at (-0.18,\yy) {$a_{\i}$};
    \node[anchor=west,font=\small] at (9.18,\yy) {$a'_{\i}$};
  }

  \node[font=\scriptsize,align=center] at (1.5,-0.68)
    {stage 1\\$(12),(34)$};
  \node[font=\scriptsize,align=center] at (4.5,-0.68)
    {stage 2\\$(13),(24)$};
  \node[font=\scriptsize,align=center] at (7.5,-0.68)
    {stage 3\\$(23)$};
\end{tikzpicture}
\caption{A $(4,A)$-sorter where $A$ is an anti-directed path of length 9. All displayed arcs belong to the sorter. The solid and dashed gray
arcs in stage~2 distinguish its two switches, while the blue arcs highlight
a decomposition into four copies of $A$ which routes $(a_1,a_2,a_3,a_4)$ to $(a_4', a_3', a_2', a_1')$.}
\label{fig:four-wire-sorter}
\end{figure}

\begin{samepage}
The next lemma is the generalization of
\cite[Theorem~4]{HT} for constructing the required sorters in robust-expanders. We use their construction with Lemma~\ref{lem:fixed-endpoint-paths} in place of \cite[Lemma~2]{HT}.

\begin{lemma}\label{lem:sorters}
Let $0<\nu\leq\tau\leq\frac{\eta}{8}<\frac{1}{8}$, let
$n\geq\exp(\frac{1}{\nu^4})$, and let $G$ be an $n$-vertex robust
$(\nu,\tau)$-outexpander with $\delta^0(G)\geq\eta n$. Set
$$
 s=\left\lceil(\log n)^9\right\rceil,~
 q=\left\lceil\frac{1}{\nu^3}\right\rceil,~
 d=\left\lceil(\log_2s)^2\right\rceil,~
 \ell=(q+1)d.
$$
For all oriented paths $A,D$ of length $\ell$ and all sets
$Z\subseteq V(G)$ with $|Z|\leq 2$, $G-Z$ contains vertex-disjoint
$(s,A)$- and $(s,D)$-sorters. Each sorter has $s(\ell+1)$ vertices.
\end{lemma}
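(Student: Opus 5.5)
We build each sorter from a Batcher sorting network on $s$ positions with $d$ stages, following the Häggkvist--Thomason construction and using Lemma~\ref{lem:fixed-endpoint-paths} to realize every short connecting path. Write the vertices of $A$ in order as $p_0,\dots,p_\ell$ and split $A$ into $d$ consecutive blocks of length $q+1$. Block $j$ runs from $p_{(j-1)(q+1)}$ to $p_{j(q+1)}$.

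The sorter consists of $s$ wires. For each stage $j$ and wire $i$ we will have a copy of the first $q$ edges of block $j$ (a segment of length $q$). The last edge of the block is where a comparator acts. Let $x$ be the end of the segment on wire $i$ in stage $j$, and let $y$ be the start of the next stage's segment on wire $i'$. If wires $i,i'$ are compared in stage $j$, we want all four cross edges $x\to y$ with the orientation dictated by the edge $p_{j(q+1)-1}p_{j(q+1)}$. Wires not compared get the straight edge only. Routing a permutation then amounts to choosing, at each comparator, "straight" or "swap", as Batcher's network allows; every route uses exactly one segment per stage, so the vertex set is partitioned into $s$ copies of $A$ and there are $s(\ell+1)$ vertices in total.

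Realizing a comparator with four forced cross edges in a general robust expander is not possible directly. So, as in \cite[Theorem~4]{HT}, we will instead realize a $2$-sorter gadget. For a comparator on wires $i,i'$ at stage $j$, we pick fresh vertices $x_i,x_{i'}$ (segment ends) and $y_i,y_{i'}$ (next starts). We then require that both the pair of paths $\{x_i\leadsto y_i,\ x_{i'}\leadsto y_{i'}\}$ and the pair $\{x_i\leadsto y_{i'},\ x_{i'}\leadsto y_i\}$ can be realized on the same vertex set, with the prescribed orientation of a path of length $q+1$.

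Concretely, we will use the standard trick of choosing a single middle vertex structure. Choose a vertex $w$ and an out/in-neighbour layout so that the block decomposes as a fixed-endpoint path of length $q$ plus one edge. Lemma~\ref{lem:fixed-endpoint-paths} applies since $q\ge 1/\nu^3$; if the lemma needs $2/\nu^3$, we take lengths around $2q$, which is absorbed by adjusting the constant. The key flexibility is that the lemma gives paths between any prescribed pair of vertices avoiding any forbidden set $D$ with $|D|+qk\le \nu^3 n/4$. So we greedily embed, for each comparator, two vertex-disjoint paths $x_i\leadsto u$, $x_{i'}\leadsto u'$, and then connect $u,u'$ to both $y_i,y_{i'}$ through the single final edge. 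The final edges are chosen as common neighbours: $u,u'$ must both be in the appropriate neighbourhoods of both $y_i$ and $y_{i'}$. We ensure this by choosing $u,u'$ first in $N^\pm(y_i)\cap N^\pm(y_{i'})$. This is the main obstacle, since such common neighbourhoods need not be large in a robust expander. We handle it by exploiting freedom in choosing $y_i,y_{i'}$: they are the starts of next-stage segments and are not yet fixed, so we choose $y_i,y_{i'}$ to be two vertices of $N^\mp(u)\cap N^\mp(u')$. By Lemma~\ref{lem:density} applied to appropriate large sets, or by simple counting over pairs with $\delta^0\ge\eta n$, we find $u,u'$ whose common in/out-neighbourhood has size at least $\eta^2 n/2$, which is linear. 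If that fails, we would fall back on routing the swap through two length-$q$ paths into $y_i,y_{i'}$ by Lemma~\ref{lem:fixed-endpoint-paths} twice, at the cost of doubling block length.

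The remaining step is counting to verify that everything fits. The total number of vertices used is about $2s(\ell+1)\le 4(\log n)^{9}\cdot(q+1)(\log_2 s)^2=O(\nu^{-3}(\log n)^{9}(\log\log n)^2)$. Together with $|Z|\le2$, this is at most $\nu^3 n/8$ because $n\ge\exp(1/\nu^4)$, i.e.\ $\log n\ge\nu^{-4}$, so polylogarithmic terms times $\nu^{-3}$ are $o(\nu^3 n)$. Hence every application of Lemma~\ref{lem:fixed-endpoint-paths} and every greedy choice has its forbidden set within budget. We build the $(s,A)$-sorter first and then the $(s,D)$-sorter avoiding it and $Z$.
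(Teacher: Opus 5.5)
\textbf{Verdict: genuine gap.} Your overall plan matches the paper: Batcher's network with $d$ stages, blocks of length $q+1$, each comparator replaced by a $(2,B)$-sorter, and the same vertex count. Your gadget is also the right one: two disjoint copies of the first $q$ edges of $B$ from the two inputs, whose ends have two common appropriate neighbours serving as outputs. What is missing is a proof that this gadget exists.

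\textbf{Why your route to the gadget fails.}
\begin{enumerate}
\item \emph{Choosing $u,u'$ first.} If you pick $u,u'$ with a large common neighbourhood and then join the inputs to them, you need Lemma~\ref{lem:fixed-endpoint-paths} for paths of length $q=\lceil 1/\nu^3\rceil$. That lemma requires length at least $2/\nu^3$, and since $\nu<\frac18$ we have $q<2/\nu^3$. So it does not apply, contrary to your claim that $q\geq 1/\nu^3$ suffices.
\item \emph{Lengthening the blocks.} Taking length about $2q$ ``by adjusting the constant'', or doubling the block length in your fallback, is not allowed. Both $q$ and $\ell=(q+1)d$ are fixed by the statement, and every route must cross each stage in exactly $q+1$ edges.
\item \emph{Embedding the segments first.} If you embed the two segments greedily, you have no control over the common neighbourhood of their endpoints. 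Pair counting only helps if you have a large family of candidate endpoints in which \emph{every} pair is simultaneously realizable by disjoint segments from the two fixed inputs. Your proposal never produces such a family.
\end{enumerate}
Lemma~\ref{lem:density} does not help here either, since it bounds edges between large sets, not common neighbourhoods.

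\textbf{The missing idea.} One trick fixes both problems at once.
\begin{itemize}
\item Glue two copies of the first $q$ edges of $B$ at their last vertices. This gives an oriented path of length $2q\geq 2/\nu^3$.
\item Apply Lemma~\ref{lem:fixed-endpoint-paths} to this path, from input $x$ to input $y$, with $h=\lceil 16/\eta^2\rceil$ requested copies with pairwise disjoint interiors.
\item Each middle vertex $z_i$ is the end of a copy of the segment from $x$, and also of a copy of the segment from $y$. For $i\neq j$, the segments $x\to z_i$ and $y\to z_j$ are vertex-disjoint.
\item Exclude the forbidden set and all auxiliary path vertices. Each $z_i$ still has at least $b=\lfloor\eta n/2\rfloor$ appropriate neighbours (out- or in-, according to the last edge of $B$).
\item Since $h\binom b2>\binom n2$, two distinct midpoints $z_i,z_j$ share two common appropriate neighbours $x',y'$. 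Together with the segments from $x$ to $z_i$ and from $y$ to $z_j$, these form the $(2,B)$-sorter.
\end{itemize}
The rest of your outline (greedy extension on unused positions, building the second sorter disjointly from the first and from $Z$) then goes through. You only need to add the $2hq$ auxiliary-path vertices to the budget, checking $4+2s(\ell+1)+2hq\leq\nu^3n/4$.
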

\end{samepage}

\begin{proof}
Set $h=\lceil\frac{16}{\eta^2}\rceil$ and $b=\lfloor\frac{\eta n}{2}\rfloor$.
Since $\log n\geq\frac{1}{\nu^4}>4096$, we have
$s\leq2(\log n)^9$, $q+1,h\leq\log n$, and $d\leq(\log n)^2$. Hence
$$
 4+2s(\ell+1)+2hq
 \leq9(\log n)^{12}
 \leq\frac{n}{4\log n}
 \leq\frac{\nu^3n}{4}.
$$

Take two copies of Batcher's network on $s$ positions, adding stages
without comparators to obtain exactly $d$ stages. Split $A$ and $D$ into
$d$ consecutive subpaths of length $q+1$. In a stage corresponding to a
subpath $B$, we will replace each comparator by a $(2,B)$-sorter and
each position not used by a comparator by a copy of $B$, as illustrated
in Figure~\ref{fig:four-wire-sorter}. Choose $2s$ distinct input vertices
outside $Z$ and construct the two sorters stage by stage, keeping their
vertex sets disjoint.

Consider a comparator whose two current vertices are $x$ and $y$, and
let $Y$ consist of $Z$ and all vertices already retained in either
sorter, except $x$ and $y$. Form an oriented path of length $2q$ by
identifying the last vertices of two copies of the first $q$ edges of
$B$. Since $2q\geq\frac{2}{\nu^3}$ and
$|Y|+2hq\leq4+2s(\ell+1)+2hq\leq\frac{\nu^3n}{4}$,
Lemma~\ref{lem:fixed-endpoint-paths}, applied with path length $2q$,
forbidden set $Y$, and $h$ requested copies, gives $h$ such paths from
$x$ to $y$ whose interiors are pairwise disjoint. Let
$z_1,\ldots,z_h$ be their middle vertices.

If the last edge of $B$ is forward, consider the outneighbors of the
$z_i$; if it is backward, consider their inneighbors. After excluding
$Y$ and all vertices of the $h$ auxiliary paths, including $x$ and $y$,
each $z_i$ still has at least
$$
 \eta n-|Y|-2hq-2
 \geq\eta n-\frac{\nu^3n}{4}-2
 \geq\frac{\eta n}{2}\geq b
$$
such neighbors. Since $\eta n\geq8$, we have
$\binom b2\geq\frac{\eta^2n^2}{32}$ and hence
$h\binom b2\geq\frac{n^2}{2}>\binom n2$. Thus two distinct middle
vertices, say $z_i,z_j$, have two common appropriate neighbors $x',y'$.
Retain the segment from $x$ to $z_i$ on one auxiliary path and the
segment from $y$ to $z_j$ on the other. These are vertex-disjoint copies
of the first $q$ edges of $B$. Together with $x',y'$ and the four
appropriate edges, they form a $(2,B)$-sorter with inputs $x,y$ and
outputs $x',y'$. The remaining vertices of the auxiliary paths are not
retained in the construction.

At a position not used by a comparator, extend its current vertex
greedily by a copy of $B$ whose new vertices avoid $Z$ and both partially
constructed sorters.
This is possible because fewer than $\eta n$ vertices are unavailable
at any step. Each stage of either sorter adds $s(q+1)$ vertices and
permits all exchanges in the corresponding sorting-network stage. The two completed sorters
therefore have the required routing property and each has
$s+sd(q+1)=s(\ell+1)$ vertices.
\end{proof}

\section{Oriented Hamilton cycles in robust expanders}
\label{sec:hamilton}

We first prove Theorem~\ref{thm:hall-inherit} and then adapt the
H\"aggkvist--Thomason argument to prove Theorem~\ref{thm:main}.

\subsection{Containing empty pairs}

For a digraph $G$, its \emph{bipartite representation} $H_G$ has vertex
classes $V_L=\{v_L:v\in V(G)\}$ and
$V_R=\{v_R:v\in V(G)\}$,
and $u_Lv_R\in E(H_G)$ precisely when $uv\in E(G)$. For
$X,Y\subseteq V(G)$, set $X_L=\{x_L:x\in X\}$ and
$Y_R=\{y_R:y\in Y\}$.
Thus
\begin{equation}\label{eq:independent-equivalence}
 e_G(X,Y)=0
 \Longleftrightarrow
 X_L\cup Y_R\text{ is independent in }H_G.
\end{equation}

We can therefore apply graph-container lemmas to $H_G$.

\begin{definition}[Container families]\label{def:container-system}
Let $G$ be a digraph on $n$ vertices and let $\alpha,\beta>0$.
A family $\cC\subseteq 2^{V(G)}\times 2^{V(G)}$
is an \emph{$(\alpha,\beta)$-container family for $G$} if the
following properties hold:
\begin{enumerate}
\item for all $X,Y\subseteq V(G)$ with $e_G(X,Y)=0$, there exists
      $(P,Q)\in\cC$ such that $X\subseteq P$ and $Y\subseteq Q$;
\item for all $(P,Q)\in\cC$, at least one of the following inequalities holds:
\begin{equation}\label{eq:container-alternatives}
 |P|<\alpha n,~ |Q|<\alpha n\text{,~or~}
 |P|+|Q|<(1-\beta)n.
\end{equation}
\end{enumerate}
\end{definition}

We prove the following lemma using the Kleitman--Winston
algorithm~\cite{KW}.

\begin{lemma}[Independent-set container lemma]\label{lem:threshold-container}
Let $H$ be a graph on $N$ vertices and let $d>0$.
There exists a family
$\mathcal F\subseteq 2^{V(H)}$ together with a map
$C\colon\mathcal F\to 2^{V(H)}$ such that
\begin{enumerate}
\item for all independent sets $I\subseteq V(H)$, there exists
      $F\in\mathcal F$ such that $F\subseteq I\subseteq C(F)$;
\item for all $F\in\mathcal F$, we have
      $|F|\leq \frac{N}{d}$ and $\Delta(H[C(F)])<d$.
\end{enumerate}
Consequently,
$|\mathcal F|\leq\sum_{j=0}^{\lfloor N/d\rfloor}\binom Nj$.
\end{lemma}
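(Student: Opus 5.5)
The plan is to run the Kleitman--Winston algorithm with a fixed vertex order and record its choices. Fix once and for all an ordering $v_1,\ldots,v_N$ of $V(H)$. Given an independent set $I$, we run the following deterministic procedure. Initialize $A=V(H)$ (the \emph{available} set) and $F=\varnothing$. While $\Delta(H[A])\geq d$, let $v$ be the first vertex in the fixed order among those vertices of $A$ whose degree in $H[A]$ is at least $d$. There are two cases.
\begin{itemize}
\item If $v\in I$, we add $v$ to $F$ and replace $A$ by $A\setminus(\{v\}\cup N_H(v))$.
\item If $v\notin I$, we replace $A$ by $A\setminus\{v\}$.
\end{itemize}
When the loop stops, we set $C(F)=A$ and put $F$ into $\mathcal F$.

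The invariant $I\subseteq A$ holds throughout. In the first case, $I$ is independent, so none of the removed neighbors of $v$ lie in $I$. In the second case, the removed vertex $v$ is not in $I$ by assumption. Also, $F\subseteq I$ by construction. At termination $\Delta(H[A])<d$, so $F\subseteq I\subseteq C(F)$ with $\Delta(H[C(F)])<d$.

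The one point that needs care is that $C(F)$ depends only on $F$, not on $I$; this is what makes $C$ a well-defined map. We argue by replaying the run from $F$ alone. At each step the vertex $v$ is determined by the current $A$ and the fixed order. The branch taken is determined by whether $v\in F$: if $v\in I$ then $v$ is put into $F$; if $v\notin I$ then $v\notin F$, because a vertex is deleted from $A$ at the step it is examined and is never examined again. Hence two independent sets producing the same $F$ make identical choices at every step and end with the same $A$. So $C$ is well defined on $\mathcal F$, the family of all sets $F$ that arise, and (i) holds.

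For the size bound in (ii), note that each time a vertex is added to $F$, at least $d+1$ vertices are removed from $A$, namely $v$ and its at least $d$ neighbors in $H[A]$. These removals are disjoint across steps and all lie in $V(H)$, so $|F|(d+1)\leq N$, which gives $|F|\leq N/d$. Since $\mathcal F$ consists of subsets of $V(H)$ of size at most $\lfloor N/d\rfloor$, the bound $|\mathcal F|\leq\sum_{j\le\lfloor N/d\rfloor}\binom Nj$ follows immediately. The main obstacle is the well-definedness argument above; the rest is bookkeeping.
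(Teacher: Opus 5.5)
You follow the same Kleitman--Winston scheme as the paper, and your replay argument for well-definedness and your bound $|F|\le N/d$ are fine. The gap is in property (i): the invariant $I\subseteq A$ is false. In the first case you delete $v$ itself from $A$, and $v\in I$. Your justification covers only the deleted neighbors of $v$. So at termination every vertex of $F$ lies outside $A$, and with $C(F)=A$ you get only $I\setminus F\subseteq C(F)$. Hence $I\not\subseteq C(F)$ whenever $F\neq\varnothing$, and the claimed conclusion $F\subseteq I\subseteq C(F)$ is wrong as written.

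The paper's repair is to set $C(F)=F\cup A$. This is still a function of $F$ alone, by your replay argument. But the degree condition then needs an argument you did not supply: every $f\in F$ is isolated in $H[F\cup A]$. This holds for two reasons. First, $F\subseteq I$ is independent. Second, when $f$ was selected, all its neighbors in the then-current available set were deleted, and $A$ only shrinks afterwards, so $f$ has no neighbor in the terminal $A$. Thus $H[F\cup A]$ is $H[A]$ plus isolated vertices, and $\Delta(H[C(F)])<d$.

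Alternatively, in the first case delete only $N_{H[A]}(v)$ and keep $v$. Then $v$ has degree $0$ in $H[A]$ and, since $d>0$, is never selected again. Your invariant $I\subseteq A$ becomes true, and the pairwise disjoint deleted neighborhoods, each of size at least $d$, still give $|F|\le N/d$.
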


\begin{proof}
Fix a linear ordering $\prec$ of $V(H)$. For all independent sets
$I\subseteq V(H)$, we define sets $U_i$ and $F_i$ recursively. Begin with
$U_0=V(H)$ and $F_0=\emptyset$.
Suppose that $U_i$ and $F_i$ have been defined. If
$\Delta(H[U_i])<d$, stop and set $U_I=U_i$ and $F_I=F_i$.
Otherwise, let $v_i$ be the $\prec$-first vertex of $U_i$ whose degree in
$H[U_i]$ is at least $d$, and define
$$
 (U_{i+1},F_{i+1})=
 \begin{cases}
 \bigl(U_i\setminus(\{v_i\}\cup N(v_i)),
       F_i\cup\{v_i\}\bigr),&v_i\in I,\\[2mm]
 \bigl(U_i\setminus\{v_i\},F_i\bigr),&v_i\notin I.
 \end{cases}
$$
At least one vertex is removed at every step, so the algorithm terminates. Note that $F_I\subseteq I$.

Whenever a vertex is added to $F_I$, at least $d$ vertices of the current
set $U_i$ are deleted as its neighbors. The sets deleted at different such
steps are disjoint, and hence $|F_I|\leq \frac{N}{d}$.

We claim that $U_I$ is determined by $F_I$. Indeed, rerun the algorithm
with $F_I$ in place of $I$, and write $U_i'$ for the current set in this
second run. We show inductively that $U_i'=U_i$. If the two current sets
agree, then both runs select the same vertex $v_i$. If $v_i\in F_I$, then
$v_i\in I$, so both runs delete $v_i$ together with its current neighbors.
If $v_i\notin F_I$, then in fact $v_i\notin I$: otherwise the original run
would add $v_i$ to the fingerprint at this step, giving $v_i\in F_I$.
Thus both runs delete only $v_i$. In particular, a vertex of
$I\setminus F_I$ is never selected in either run. It follows by induction
that the two runs have the same current set at every step and terminate
with the same set. Write the terminal set of the second run as $U_{F_I}$;
then $U_I=U_{F_I}$.

Let $\mathcal F$ be the family of fingerprints $F_I$ arising from
independent sets $I\subseteq V(H)$, and define $C(F)=F\cup U_F$.
If $I$ is independent, then no vertex of $I$ can be deleted as a neighbor
of a vertex added to $F_I$. Thus every vertex of $I$ is either placed in
$F_I$ or remains in $U_I$, and consequently
$F_I\subseteq I\subseteq F_I\cup U_I=C(F_I)$.

Finally, every vertex of $F$ is isolated in $H[C(F)]$: the set $F$ is
independent, and all remaining neighbors of a fingerprint vertex are
deleted when that vertex is selected. The stopping rule also gives
$\Delta(H[U_F])<d$.
It follows that $\Delta(H[C(F)])<d$. Since every fingerprint has order at
most $\frac{N}{d}$, counting its possible choices gives
$|\mathcal F|\leq\sum_{j=0}^{\lfloor N/d\rfloor}\binom Nj$.
\end{proof}

We now apply Lemma~\ref{lem:threshold-container} to the bipartite
representation of a robust expander. If a resulting container violated
all three alternatives in Definition~\ref{def:container-system},
Lemma~\ref{lem:density} would force too many edges.

\begin{corollary}[Containing empty pairs]\label{lem:containers}
Let $0<\nu\leq\tau\leq\frac{\eta}{2}<\frac{1}{2}$, and let $G$ be an $n$-vertex robust
$(\nu,\tau)$-outexpander satisfying $\dmin(G)\geq\eta n$.
If $n\geq\frac{16}{\nu^2}$, then
there exists a $(\tau,\frac{\nu}{2})$-container family $\cC$ for $G$ such that
$|\cC|\leq n^{48/\nu^2}$.
\end{corollary}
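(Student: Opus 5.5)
The plan is to apply Lemma~\ref{lem:threshold-container} to the bipartite representation $H=H_G$, which has $N=2n$ vertices, with a degree threshold $d$ linear in $n$. I will take $d=\frac{\nu^2n}{8}$. For each fingerprint $F\in\mathcal F$, define the pair
$$
P_F=\{v:v_L\in C(F)\}, \qquad Q_F=\{v:v_R\in C(F)\},
$$
and set $\cC=\{(P_F,Q_F):F\in\mathcal F\}$.

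Property (i) of Definition~\ref{def:container-system} follows from \eqref{eq:independent-equivalence}. If $e_G(X,Y)=0$, then $I=X_L\cup Y_R$ is independent in $H_G$. Lemma~\ref{lem:threshold-container} then gives some $F$ with $I\subseteq C(F)$, and hence $X\subseteq P_F$ and $Y\subseteq Q_F$.

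For property (ii), I use the maximum-degree bound. The edges of $G$ from $P_F$ to $Q_F$ correspond exactly to the edges of $H[C(F)]$ between $C(F)\cap V_L$ and $C(F)\cap V_R$. Each left vertex of $C(F)$ has fewer than $d$ neighbours inside $C(F)$, so
$$
e_G(P_F,Q_F)<|P_F|\,d\leq nd=\frac{\nu^2n^2}{8}<\frac{\nu^2n^2}{4}.
$$
Suppose all three alternatives in \eqref{eq:container-alternatives} failed with $\alpha=\tau$ and $\beta=\frac{\nu}{2}$. Then $|P_F|,|Q_F|\geq\tau n$ and $|P_F|+|Q_F|\geq(1-\frac{\nu}{2})n$, and Lemma~\ref{lem:density} would give $e_G(P_F,Q_F)\geq\frac{\nu^2n^2}{4}$. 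This is a contradiction, so $\cC$ is a $(\tau,\frac{\nu}{2})$-container family.

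It remains to count, which is the only step needing care with constants. We have $|\cC|\leq|\mathcal F|\leq\sum_{j=0}^{k}\binom{2n}{j}$, where $k=\lfloor N/d\rfloor=\lfloor 16/\nu^2\rfloor$. The hypothesis $n\geq\frac{16}{\nu^2}$ guarantees $d\geq2$, so the threshold is meaningful, and also $k\leq n$. Then I would bound
$$
\sum_{j\le k}\binom{2n}{j}\leq\sum_{j\le k}(2n)^j\leq(2n+1)^k\leq n^{3k}\leq n^{48/\nu^2}.
$$
This uses $2n+1\leq n^3$ for $n\geq2$, which holds because $n\geq16/\nu^2>16$. The main potential obstacle is matching the exponent $48/\nu^2$ exactly. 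If the crude bound above is off, I would fall back on choosing $d=\frac{\nu^2n}{8}$ precisely so that $N/d=16/\nu^2$ and absorbing the factor $3$ via $2n+1\leq n^3$.
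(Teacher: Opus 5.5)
Your proposal is correct and matches the paper's proof essentially step for step: the same threshold $d=\frac{\nu^2n}{8}$ on $H_G$, the same verification of property (i) via \eqref{eq:independent-equivalence}, and the same contradiction with Lemma~\ref{lem:density} via $e_G(P,Q)\leq nd<\frac{\nu^2n^2}{4}$. Your explicit chain $\sum_{j\le k}\binom{2n}{j}\leq(2n+1)^k\leq n^{3k}\leq n^{48/\nu^2}$ supplies the final counting inequality, which the paper states without detail. One small point: the strict inequality $e_G(P_F,Q_F)<|P_F|\,d$ fails when $P_F=\varnothing$, but the non-strict version is all you need.
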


\begin{proof}
Let $H=H_G$, so that $H$ has $2n$ vertices, and set
$d=\frac{\nu^2n}{8}$.
Apply Lemma~\ref{lem:threshold-container} to $H$ with this parameter.
For all resulting fingerprints $F\in\mathcal F$, write
$C_F=C(F)$ and define
$P(F)=\{v\in V(G):v_L\in C_F\}$ and
$Q(F)=\{v\in V(G):v_R\in C_F\}$.
Thus $C_F=P(F)_L\cup Q(F)_R$. Set
$\cC=\{(P(F),Q(F)):F\in\mathcal F\}$.

If $X,Y\subseteq V(G)$ satisfy $e_G(X,Y)=0$, then $I=X_L\cup Y_R$ is
independent by~\eqref{eq:independent-equivalence}.
Lemma~\ref{lem:threshold-container} therefore gives a fingerprint
$F\in\mathcal F$ such that $I\subseteq C_F$, and the
corresponding pair satisfies
$X\subseteq P(F)$ and $Y\subseteq Q(F)$.

Fix a fingerprint $F\in\mathcal F$ and write
$P=P(F)$ and $Q=Q(F)$.
Since $\Delta(H[C_F])<d$, we have
$$
 e_G(P,Q)=e(H[C_F])\leq|P|d\leq nd<\frac{\nu^2}{4}n^2.
$$
If none of the three alternatives in~\eqref{eq:container-alternatives} held,
Lemma~\ref{lem:density} would instead give
$e_G(P,Q)\geq\frac{\nu^2n^2}{4}$, a contradiction.

It remains to bound $|\cC|$. Since $\frac{2n}{d}=\frac{16}{\nu^2}$,
Lemma~\ref{lem:threshold-container} therefore gives
$$
 |\cC|
 \leq|\mathcal F|
 \leq\sum_{j=0}^{\lfloor16/\nu^2\rfloor}\binom{2n}{j}
 \leq n^{48/\nu^2}.
$$
\end{proof}

\subsection{Inheritance of Hall-type expansion}

We first show that random sets are typical with high probability. We then
prove that every typical pair is expanding, which yields
Theorem~\ref{thm:hall-inherit}.

\begin{definition}[Typical set]\label{def:typical}
Let $G$ be a digraph on $n$ vertices, let $\cC$ be a family of ordered
pairs of subsets of $V(G)$, let $\gamma>0$, and let $S\subseteq V(G)$ have
order $s$. The set $S$ is \emph{$(\cC,\gamma)$-typical in $G$} if
\begin{align}
 |S\cap N^\pm(v)|
 &\geq\left(\frac{|N^\pm(v)|}{n}-\gamma\right)s
 &&\text{for all }v\in V(G), \label{eq:degree-typical}\\
 |S\cap R|
 &\leq\left(\frac{|R|}{n}+\gamma\right)s
 &&\text{for all }R\in\{P,Q:(P,Q)\in\cC\}. \label{eq:container-typical}
\end{align}
\end{definition}

We use the following standard form of the Chernoff--Hoeffding bound for
sampling without replacement; see~\cite{Hoeffding}.

\begin{lemma}[Chernoff--Hoeffding bound]\label{lem:chernoff-hoeffding}
Let $\Omega$ be a set of order $n$, let $R\subseteq\Omega$, let
$1\leq s\leq n$, and let $S$ be a uniformly random $s$-subset of $\Omega$.
If $X=|S\cap R|$ and
$\mu=\mathbb E X=\frac{|R|s}{n}$, then, for all $t\geq0$,
$$
 \Pr(X\leq\mu-t)\leq\exp\left(-\frac{2t^2}{s}\right)
 ~\text{and}~
 \Pr(X\geq\mu+t)\leq\exp\left(-\frac{2t^2}{s}\right).
$$
\end{lemma}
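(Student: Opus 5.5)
The plan is to reduce to the classical Hoeffding inequality for sums of independent $[0,1]$-valued variables through a moment generating function comparison. Then we apply Markov's inequality with the optimal exponent. Write $r=|R|$ and $p=\frac{r}{n}$, so that $\mu=ps$. By symmetry it suffices to prove the upper tail. The lower tail follows by applying the upper tail to $R'=\Omega\setminus R$, since $|S\cap R'|=s-X$ and $\mathbb E|S\cap R'|=s-\mu$, so the event $X\leq\mu-t$ is exactly the event $|S\cap R'|\geq\mathbb E|S\cap R'|+t$.

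\textbf{Step 1: the law of $X$ and the choice of index set.} The law of $X$ is hypergeometric: $\Pr(X=j)=\binom{r}{j}\binom{n-r}{s-j}/\binom ns$. This expression is symmetric in $r$ and $s$, so $X$ has the same law as $|S_0\cap R_0|$, where $S_0$ is a fixed $r$-set and $R_0$ is a uniformly random $s$-set. Hence, replacing $(r,s)$ by $(s,r)$ if necessary, we may write $X=\sum_{v\in T}\xi_v$. Here $T$ is a fixed set of size $k=\min\{r,s\}\leq s$, and $\xi_v=\mathbf 1[v\in U]$ for a uniformly random set $U$ of the other size. Each $\xi_v$ has mean $\frac{\mu}{k}$ in either representation.

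\textbf{Step 2: moment generating function comparison.} The indicators of a uniformly random fixed-size subset are negatively associated; in fact the elementary inequality suffices, namely
$$\mathbb E\prod_{v\in T}e^{\lambda\xi_v}\leq\prod_{v\in T}\mathbb E e^{\lambda\xi_v}\qquad\text{for all }\lambda\geq0.$$
To prove this directly, expand $\prod_{v\in T}e^{\lambda\xi_v}=\prod_{v\in T}\bigl(1+(e^\lambda-1)\xi_v\bigr)$ and use that, for any $J\subseteq T$ with $|J|=j$,
$$\Pr(J\subseteq U)=\frac{(u)_j}{(n)_j}\leq\Bigl(\frac un\Bigr)^j,$$
where $u=|U|$ and $(x)_j$ denotes the falling factorial. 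All coefficients in the expansion are nonnegative, so the inequality follows term by term. (Alternatively, one can cite Hoeffding's theorem that sampling without replacement is dominated in convex order by sampling with replacement; this route works directly with $s$ draws and avoids Step 1.)

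\textbf{Step 3: Hoeffding's lemma and optimization.} Hoeffding's lemma gives $\mathbb E e^{\lambda(\xi-\mathbb E\xi)}\leq e^{\lambda^2/8}$ for any $[0,1]$-valued $\xi$. Combined with Step 2, this yields $\mathbb E e^{\lambda(X-\mu)}\leq e^{k\lambda^2/8}$. Markov's inequality then gives
$$\Pr(X\geq\mu+t)\leq e^{-\lambda t+k\lambda^2/8}.$$
Taking $\lambda=\frac{4t}{k}$ gives $\exp(-\frac{2t^2}{k})$, which is at most $\exp(-\frac{2t^2}{s})$ since $k\leq s$.

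The only step needing care is Step 2, the comparison between dependent and independent indicators. The falling-factorial expansion handles it cleanly. The remaining steps are routine.
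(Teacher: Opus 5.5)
Your argument is correct and takes a different route from the paper. The paper gives no proof of this lemma; it only cites Hoeffding. Hoeffding's treatment of sampling without replacement is the convex-order comparison you mention in your parenthetical: for every continuous convex $f$, the expectation of $f$ applied to the without-replacement sum of $s$ draws is at most that for the with-replacement sum, so the bound for independent variables transfers directly.

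Your main route replaces that theorem with an elementary argument specific to $0/1$ populations:
\begin{itemize}
\item the symmetry of the hypergeometric law brings the number of summands down to $\min\{|R|,s\}\leq s$;
\item the falling-factorial expansion then gives the moment generating function comparison term by term.
\end{itemize}
The cost is that the term-by-term comparison needs $e^\lambda-1\geq0$. That is why you must handle the lower tail through the complement $\Omega\setminus R$. Convex ordering, by contrast, applies to $e^{\lambda x}$ for every real $\lambda$, so it handles both tails, and arbitrary bounded populations, at once. The gain is a fully self-contained proof, with the marginally sharper exponent $2t^2/\min\{|R|,s\}$ in the upper tail.

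Two small repairs are needed.
\begin{itemize}
\item In Step~1 the sizes are reversed. The swapped representation should intersect a fixed $s$-set with a uniformly random $|R|$-set. As you wrote it (a fixed $|R|$-set and a random $s$-set), it is just the original setup.
\item When $k=0$ (that is, $R=\emptyset$, or $R=\Omega$ in the lower-tail reduction), the choice $\lambda=4t/k$ is undefined. In that case $X$ is constant and the bound holds trivially.
\end{itemize}
You could also avoid the swap entirely by summing the indicators of the $s$ sequential draws. The probability that $j$ prescribed draws all land in $R$ is again $(|R|)_j/(n)_j\leq(|R|/n)^j$, so the same term-by-term comparison applies.
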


We also use two sampling observations, implicit in the proofs of
\cite[Lemma~10 and Theorem~11]{HT}. In a uniformly chosen ordered family
of disjoint subsets of prescribed orders, each set is uniform among
subsets of the ambient set of its order. Also, a uniformly chosen
$j$-subset of a uniformly chosen $s$-subset is uniform among $j$-subsets
of the ambient set. Neither linearity of expectation nor the union
bound requires independence.

The following proposition is a direct generalization of
\cite[Lemma~10]{HT}. At a transition between pipeline layers, different
paths may require edges in opposite directions. Part~(ii) ensures that
after deletions we can choose typical subsets with typical complements;
these will supply the two matchings needed in
Lemma~\ref{lem:robust-pipeline}.

\begin{proposition}
\label{prop:random-typical}
Let $G$ be a digraph on $n$ vertices, let $\cC$ be a family of ordered
pairs of subsets of $V(G)$, and let $S$ be a uniformly random $s$-subset
of $V(G)$.
\begin{enumerate}
\item For all $\gamma>0$ and $1\leq s\leq n$,
$\Pr(S\text{ is not $(\cC,\gamma)$-typical in $G$})
\leq(2n+2|\cC|)e^{-2\gamma^2s}$.
\item Let $0<\gamma\leq1$ and $K\geq1$, and suppose that
$|\cC|\leq n^K$, $s\geq(\log n)^9$, and $r$ is an integer satisfying
$$
 \log n\geq
 \max\left\{\frac{4}{\gamma^2},(K+20)^{1/3}\right\}
 ~\text{and}~
 \frac{2(K+20)}{\gamma^2}\log n
 \leq r\leq(\log n)^5.
$$
With probability at least $1-\frac{1}{n}$, the following holds simultaneously
for all $H\subseteq S$ of order $h< (\log n)^3$ and all integers $u$
with $u=0$, $u=s-h$, or $r\leq u\leq s-h-r$,
at least $\left(1-\frac{1}{2n}\right)\binom{s-h}{u}$ of the $u$-subsets of
$S\setminus H$ are
$(\cC,\gamma)$-typical in $G$.
\end{enumerate}
\end{proposition}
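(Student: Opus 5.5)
Part~(i) is a direct union bound. For each $v\in V(G)$ and each sign, apply Lemma~\ref{lem:chernoff-hoeffding} with $R=N^\pm(v)$ and $t=\gamma s$: the lower-tail bound gives failure of \eqref{eq:degree-typical} with probability at most $e^{-2\gamma^2 s}$. This gives $2n$ events. For each $(P,Q)\in\cC$, apply the upper-tail bound to $R=P$ and to $R=Q$, which gives $2|\cC|$ events. Summing these bounds proves (i). Duplicated sets only help.

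For part~(ii), I will use a double-counting (Markov) argument on top of (i), using the second sampling observation. Fix $h<(\log n)^3$ and an admissible $u$. Draw a uniformly random $h$-subset $H\subseteq S$ and then a uniformly random $u$-subset $U\subseteq S\setminus H$. The set $U$ is a uniform $u$-subset of a uniform $u$-subset of $V(G)$, hence uniform among $u$-subsets of $V(G)$. By (i),
\[
\Pr(U\text{ not typical})\leq p_u:=(2n+2n^K)e^{-2\gamma^2u}.
\]
Call a pair $(S,H)$ \emph{bad} if more than a $\frac1{2n}$ fraction of the $u$-subsets of $S\setminus H$ are atypical. For fixed $S$, the number of bad $H$ is at most $\binom sh$ times the probability that a random $H\subseteq S$ is bad. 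Markov's inequality applied to the conditional failure fraction gives
\[
\Pr\bigl((S,H)\text{ bad}\bigr)\leq 2n\,p_u .
\]
The statement requires \emph{all} $H$, so I cannot simply average over $H$. Instead I union bound over the $\binom sh\leq s^h$ choices of $H$. For a fixed $h$-set $H_0\subseteq V(G)$, conditioned on $H_0\subseteq S$, the set $S\setminus H_0$ is a uniform $(s-h)$-subset of $V(G)\setminus H_0$, and a uniform $u$-subset of it is uniform in $V(G)\setminus H_0$. That is not uniform in $V(G)$. To handle this, I will count ordered pairs $(H,U)$ with $H\subseteq S$ directly:
\[
\mathbb E\,\#\{(H,U):U \text{ atypical}\}\leq \binom sh\binom{s-h}u p_u,
\]
since each such pair, conditioned on its shape, has $U$ uniform. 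Markov then bounds the probability that some $H$ is bad by roughly $\binom sh\cdot 2n\,p_u\leq s^{h}\cdot 4n^{K+1}e^{-2\gamma^2u}$.

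Finally I union bound over $h<(\log n)^3$ and over $u$, of which there are at most $s+1\leq n+1$ values. For $r\leq u$ with $u\geq r\geq \frac{2(K+20)}{\gamma^2}\log n$, we get $e^{-2\gamma^2 u}\leq n^{-4(K+20)}$. Meanwhile $s^h$ is a problem if $s$ is large, since $h\log s$ can reach $(\log n)^4$, while $\gamma^2u$ is only about $(K+20)\log n$ at $u=r$. This is the main obstacle. The fix is that when $u$ is small we should not union bound over all of $H$. Only the typicality of $U$ matters, and $U$ avoids $H$, so the relevant event is about pairs $(H,U)$ that are disjoint inside $S$. I will instead bound the probability that the fraction of atypical $u$-subsets of $S\setminus H$ exceeds $\frac1{2n}$ using the trivial relation
\[
\#\{\text{atypical }U\subseteq S\setminus H\}\leq \#\{\text{atypical }U\subseteq S\}.
\]
Since $\binom{s}{u}/\binom{s-h}{u}\leq (s/(s-h-u+1))^{h}$, I will apply this when $u\leq s-h-r$ together with the hypotheses $r\leq(\log n)^5$ and $s\geq(\log n)^9$ to keep the ratio at most $n^{O(1)}$. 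The remaining range $u>s-h-r$ is handled by complementation: $u=s-h$ gives $U=S\setminus H$, which is typical whenever $S$ is suitably typical with slack $\gamma/2$, since $h$ is tiny. Then a single Markov bound on $S$ with threshold $n^{-(K+20)}$, together with $\log n\geq 4/\gamma^2$ and $(\log n)^3\geq K+20$ to absorb constants, yields total failure probability at most $\frac1n$.

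The part I am least sure of is the bookkeeping in this final step, in particular the ratio bound and the $u=0$ and $u=s-h$ endpoint cases.
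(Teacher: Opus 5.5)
Your part~(i) is correct, and so are the $(H,U)$ double count and your treatment of $u=s-h$ via $(\cC,\frac{\gamma}{2})$-typicality of $S$; the latter needs $\gamma s\geq4h$, which holds here. The gap is in the final step of~(ii). There you transfer a single Markov bound on the number of atypical $u$-subsets of $S$ (threshold $n^{-(K+20)}$) to $S\setminus H$ for the \emph{whole} range $r\leq u\leq s-h-r$, asserting that $\binom{s}{u}/\binom{s-h}{u}$ is $n^{O(1)}$ there. This is false. The ratio equals $\prod_{i=0}^{h-1}\frac{s-i}{s-u-i}\geq\bigl(\frac{s}{s-u}\bigr)^{h}$. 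For $u\approx s/2$, which is admissible since $h+r\leq s/2$, it is roughly $2^{h}$. This is superpolynomial when $h$ is close to $(\log n)^3$, and near $u=s-h-r$ it is of order $(s/(r+h))^{h}$. With a polynomial threshold the transfer works only while $hu/s=O(\log n)$. For example, if $u<(\log n)^5$, then $h+u\leq s/2$, and so $\binom{s}{u}/\binom{s-h}{u}\leq\exp(2hu/s)\leq\exp(2/\log n)<2$.

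The repair is the argument you set aside. Your double count gives $\Pr(\text{some }H\text{ is bad})\leq2n\binom{s}{h}p_u$, and this is small precisely when $u$ is large. For $u\geq(\log n)^5$, the hypothesis $\log n\geq4/\gamma^2$ gives $2\gamma^2u\geq8(\log n)^4$, which beats $\binom{s}{h}\leq s^h\leq e^{(\log n)^4}$. The hypothesis $(\log n)^3\geq K+20$ then absorbs the factor $4n^{K+1}$ and the union bound over at most $n^2$ pairs $(h,u)$. So you should split at $L=(\log n)^5$. Below $L$, use Markov on $S$ together with the ratio bound. At and above $L$, use the union bound over $H$ via the double count; this range also contains $u=s-h$, since $s-h\geq L$. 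This split is exactly the paper's proof. Alternatively, you could keep a single Markov bound on $S$ but make its threshold depend on $u$. This works because $p_u$ decays like $e^{-2\gamma^2u}$, while the ratio is at most $e^{O(hu/s+h^2/s)}$ for $u\leq s/2$ and at most $e^{(\log n)^4}$ overall.
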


\begin{proof}
For all sets $N^\pm(v)$ and all sets which occur as one side of
a pair in $\cC$, Lemma~\ref{lem:chernoff-hoeffding}, applied with
$t=\gamma s$, shows that the corresponding inequality in
Definition~\ref{def:typical} fails with probability at most
$e^{-2\gamma^2s}$. There are at most $2n+2|\cC|$ such inequalities, so
the union bound proves~(i).

For~(ii), the assertion with $u=0$ is immediate because the empty set is
typical. We prove the remaining cases by adapting the argument of
\cite[Lemma~10]{HT}. Call a set $J\subseteq V(G)$ \emph{bad} if it is not
$(\cC,\frac{\gamma}{2})$-typical. If $J$ is a uniformly random $j$-subset of
$V(G)$ with $j\geq1$, then part~(i) gives
\begin{equation}\label{eq:bad-j-set}
 \Pr(J\text{ is bad})
 \leq (2n+2n^K)e^{-\gamma^2j/2}
 \leq 4n^K e^{-\gamma^2j/2}=:p_j.
\end{equation}
In particular, the choice of $r$ gives
\begin{equation}\label{eq:bad-j-polynomial}
 p_j\leq4n^{-20}~\text{whenever}~j\geq r.
\end{equation}

Set $L=(\log n)^5$. We first consider integers $j$ with
$r\leq j<L$. Let $Y_j$ denote the number of bad $j$-subsets of $S$.
By symmetry and~\eqref{eq:bad-j-polynomial},
$\mathbb E Y_j\leq4n^{-20}\binom{s}{j}$.
Consequently, Markov's inequality gives
$\Pr(Y_j>n^{-3}\binom{s}{j})\leq4n^{-17}$.
A union bound over the fewer than $L$ possible values of $j$ shows that,
with failure probability at most $4Ln^{-17}$, all these inequalities hold
simultaneously. If $h<(\log n)^3$, $H_0\in\binom{S}{h}$, and
$r\leq j<L$, then $h+j<(\log n)^3+(\log n)^5\leq \frac{s}{2}$, and hence
$$
 \frac{\binom{s}{j}}{\binom{s-h}{j}}
 =\prod_{i=0}^{j-1}\left(1+\frac{h}{s-h-i}\right)
 \leq\exp\left(\frac{2hj}{s}\right)
 \leq\exp\left(\frac{2}{\log n}\right)<2.
$$
It follows that fewer than
$\frac{1}{2n}\binom{s-h}{j}$ of the $j$-subsets of $S\setminus H_0$ are bad.

We next consider $j\geq L$. For fixed integers $h$ and $j$, let
$$
 Z_{h,j}=
 \sum_{H_0\in\binom{S}{h}}
 \bigl|\{J\in\tbinom{S\setminus H_0}{j}:J\text{ is bad}\}\bigr|.
$$
There are $\binom{s}{h}\binom{s-h}{j}$ ordered pairs of disjoint
$h$- and $j$-subsets of $S$. By symmetry and~\eqref{eq:bad-j-set},
$\mathbb E Z_{h,j}\leq \binom{s}{h}\binom{s-h}{j}p_j$.
If there exists $H_0\in\binom{S}{h}$ leaving more than
$\frac{1}{2n}\binom{s-h}{j}$ bad $j$-subsets, then
$Z_{h,j}>\frac{1}{2n}\binom{s-h}{j}$. Markov's inequality therefore shows
that the probability of this event is at most $2n\binom{s}{h}p_j$.
Since $s\leq n$, a union bound over all
$h<(\log n)^3$ and $L\leq j\leq s-h$, together with
\eqref{eq:bad-j-set}, bounds the total failure probability by
$8n^{K+3}\exp((\log n)^4-\frac{\gamma^2(\log n)^5}{2})\leq8n^{-17}$.
Indeed, there are at most $n^2$ choices of $(h,j)$,
$\binom{s}{h}\leq\exp((\log n)^4)$, Markov's inequality contributes the
factor $2n$, and \eqref{eq:bad-j-set} contributes
$4n^K\exp(-\frac{\gamma^2(\log n)^5}{2})$.
The assumptions give
$\frac{\gamma^2\log n}{2}\geq2$ and $(\log n)^3\geq K+20$.
Since $4((\log n)^5+2)n^{-17}<\frac{1}{n}$, we conclude that, with probability
at least $1-\frac{1}{n}$, simultaneously for all integers
$0\leq h<(\log n)^3$, all $H_0\in\binom{S}{h}$, and all integers $j$
with $j=s-h$ or $r\leq j\leq s-h-r$,
all but at most $\frac{1}{2n}\binom{s-h}{j}$ of the $j$-subsets of
$S\setminus H_0$ are $(\cC,\frac{\gamma}{2})$-typical. Such sets are also
$(\cC,\gamma)$-typical, which proves~(ii).
\end{proof}

\begin{samepage}
The following theorem is the robust-expander generalization of
\cite[Lemma~8]{HT}.

\begin{theorem}\label{thm:lemma8}
Let $0<\nu\leq\tau\leq\frac{\eta}{8}<\frac{1}{8}$, and let $G$ be an $n$-vertex robust
$(\nu,\tau)$-outexpander satisfying
$\dmin(G)\geq\eta n$. Let $\cC$ be a
$(\tau,\frac{\nu}{2})$-container family for $G$.
If $S,S'\subseteq V(G)$ are disjoint
$(\cC,\frac{\nu}{24})$-typical sets of the same order, then $(S,S')$ is
$\frac{\nu}{4}$-expanding.
\end{theorem}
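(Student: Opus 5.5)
The plan is to argue by contradiction, using one container to cover the non-expanding obstruction. By the symmetry of the definition (swap the roles of $S$ and $S'$, and for $N^-$ reverse which side of the container pair we use), it suffices to treat $i=1$ and outneighborhoods. Fix $A\subseteq S$ with $a=|A|$, $0<a\leq\frac m2$, where $m=|S|=|S'|$. Suppose $|N^+(A)\cap S'|<(1+\frac\nu4)a$. Put $Y=S'\setminus N^+(A)$. Then $e_G(A,Y)=0$ and
$$|Y|>m-\left(1+\tfrac\nu4\right)a\geq m-\tfrac{\nu m}{8}-a.$$
By property~(i) of a $(\tau,\frac\nu2)$-container family there is $(P,Q)\in\cC$ with $A\subseteq P$ and $Y\subseteq Q$. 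Hence \eqref{eq:container-typical}, with $\gamma=\frac{\nu}{24}$, gives
$$a\leq|S\cap P|\leq\left(\tfrac{|P|}n+\tfrac\nu{24}\right)m \quad\text{and}\quad |Y|\leq|S'\cap Q|\leq\left(\tfrac{|Q|}n+\tfrac\nu{24}\right)m.$$
I will derive a contradiction in each of the three alternatives of \eqref{eq:container-alternatives}.

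\emph{Case $|P|<\tau n$.} Here $a<(\tau+\frac\nu{24})m\leq(\frac\eta8+\frac\eta{24})m=\frac{\eta m}{6}$. This case does not need the container at all. Pick any $v\in A$. By \eqref{eq:degree-typical} and $\dmin(G)\geq\eta n$, we get $|N^+(v)\cap S'|\geq(\eta-\frac\nu{24})m\geq\frac{23\eta m}{24}$. This exceeds $(1+\frac\nu4)\frac{\eta m}{6}>(1+\frac\nu4)a$, which is a contradiction.

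\emph{Case $|Q|<\tau n$.} Then $|Y|<(\tau+\frac\nu{24})m<\frac m{6}$. On the other hand $a\leq\frac m2$ gives $|Y|>m-(1+\frac\nu4)\frac m2\geq\frac{7m}{16}$, using $\nu<\frac18$. This is a contradiction.

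\emph{Case $|P|+|Q|<(1-\frac\nu2)n$.} Adding the two typicality bounds gives
$$a+|Y|\leq\left(\tfrac{|P|+|Q|}{n}+\tfrac\nu{12}\right)m<\left(1-\tfrac{5\nu}{12}\right)m.$$
But the lower bound on $|Y|$ gives $a+|Y|>m-\frac{\nu}{4}a\geq(1-\frac\nu8)m$. Since $\frac\nu8<\frac{5\nu}{12}$, this is a contradiction.

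For inneighborhoods, set $Y=S'\setminus N^-(A)$, so that $e_G(Y,A)=0$. Take the container pair with $Y\subseteq P$ and $A\subseteq Q$; the same three cases apply with the roles of $P$ and $Q$ swapped. The first case now uses the in-degree version of \eqref{eq:degree-typical}. This is why typicality is required for both sides of every pair in $\cC$ and for both $N^+(v)$ and $N^-(v)$.

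The only step needing care is the constant bookkeeping. The typicality error $\frac\nu{24}$, applied twice, gives $\frac{\nu}{12}$. This must stay below the container slack $\frac\nu2$ minus the expansion loss $\frac\nu4\cdot\frac a m\leq\frac\nu8$. The small-set cases are handled by $\tau\leq\frac\eta8$ together with minimum semidegree. I expect no genuine obstacle, since the container family already encodes the robust expansion through Lemma~\ref{lem:density}.
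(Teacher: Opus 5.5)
Your proof is correct and follows the paper's argument: you place the empty pair $(A,S'\setminus N^{\pm}(A))$ in a container, rule out each of the three alternatives by typicality, and use the same final comparison of $1-\frac{5\nu}{12}$ against $1-\frac{\nu}{8}$. The only difference is organizational: the paper first disposes of $a\leq\frac{\eta s}{4}$ with the one-vertex degree bound and applies the container only to larger $A$, whereas you apply the container throughout and use the degree bound inside the case $|P|<\tau n$.
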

\end{samepage}

\begin{proof}
Write $|S|=|S'|=s$. Let $A\subseteq S$ with
$0<|A|=a\leq \frac{s}{2}$. We first prove
\begin{equation}\label{eq:out-expansion}
 |N^+(A)\cap S'|\geq(1+\frac{\nu}{4})a.
\end{equation}

If $a\leq\frac{\eta s}{4}$, choose $x\in A$. By
\eqref{eq:degree-typical} and
$d_G^+(x)\geq\eta n$,
$$
 |N^+(A)\cap S'|
 \geq |N^+(x)\cap S'|
 \geq(\eta-\frac{\nu}{24})s
 \geq\frac{7\eta}{8}s
 >(1+\frac{\nu}{4})a.
$$

Suppose that $a>\frac{\eta s}{4}$ and that \eqref{eq:out-expansion} fails. Set
$B=S'\setminus N^+(A)$.
Thus $e_G(A,B)=0$ and
\begin{equation}\label{eq:B-large}
 |B|>s-(1+\frac{\nu}{4})a
 \geq\frac{1-\nu/4}{2}s
 \geq\frac{3}{8}s.
\end{equation}
Choose $(P,Q)\in\cC$ with $A\subseteq P$ and $B\subseteq Q$.

If $|P|<\tau n$, then
$|A|\leq|S\cap P|\leq(\tau+\frac{\nu}{24})s
\leq\frac{3\eta}{16}s<\frac{\eta}{4}s$, a contradiction.
Likewise, $|Q|<\tau n$ would give
$|B|\leq|S'\cap Q|\leq(\tau+\frac{\nu}{24})s
\leq\frac{3\eta}{16}s<\frac{3}{8}s$, contrary to
\eqref{eq:B-large}. Therefore
$|P|+|Q|<(1-\frac{\nu}{2})n$. Applying \eqref{eq:container-typical} to $S$ and
$S'$ gives
$$
 |A|+|B|
 \leq |S\cap P|+|S'\cap Q|
 \leq\left(\frac{|P|+|Q|}{n}+\frac{\nu}{12}\right)s
 <\left(1-\frac{5\nu}{12}\right)s.
$$
On the other hand, the definition of $B$ and the assumed failure of
\eqref{eq:out-expansion} imply
$|A|+|B|>s-\frac{\nu}{4}a\geq(1-\frac{\nu}{8})s$,
a contradiction. This proves out-expansion from $S$ into $S'$.

For in-expansion, the same one-vertex estimate applies with $N^-(x)$
when $a\leq\frac{\eta s}{4}$. Otherwise, if
$|N^-(A)\cap S'|<(1+\frac{\nu}{4})a$, set $B=S'\setminus N^-(A)$.
We have $e_G(B,A)=0$, so choose a container $(P,Q)\in\cC$ with
$B\subseteq P$ and $A\subseteq Q$. Applying the preceding estimates to
$S'\cap P$ and $S\cap Q$ gives the same contradictory bounds on
$|A|+|B|$. Interchanging $S$ and $S'$ proves the remaining two inequalities,
so
$(S,S')$ is $\frac{\nu}{4}$-expanding.
\end{proof}

\begin{proof}[Proof of Theorem~\ref{thm:hall-inherit}]
Apply Corollary~\ref{lem:containers} to obtain a family $\cC$ of at most
$n^{48/\nu^2}$ ordered pairs.
Each of $S$ and $S'$ is a uniformly random $m$-subset of $V(G)$.
Proposition~\ref{prop:random-typical}(i) and a union bound show that
the probability that either set is not $(\cC,\frac{\nu}{24})$-typical is at most
$$
 2(2n+2n^{48/\nu^2})e^{-\nu^2m/288}
 \leq8n^{48/\nu^2}e^{-\nu^2m/288}.
$$
Theorem
\ref{thm:lemma8} then makes $(S,S')$ $\frac{\nu}{4}$-expanding.
\end{proof}

Let $s,t\in\mathbb N$. A \emph{pipeline of width $s$ and length $t$}
consists of pairwise disjoint $s$-sets $S_0,S_1,\ldots,S_t$ such that,
for all collections of $s$ oriented paths of length $t$, there exists an
embedding of vertex-disjoint copies of these paths through the sets,
using one vertex of each $S_i$ on each path.

\begin{definition}[Robust pipeline]\label{def:robust-pipeline}
Pairwise disjoint sets $W_0,\ldots,W_t$, each of order $s$ or $s+1$,
form a \emph{robust pipeline of width $s$ and length $t$} if, for all
$0\leq a\leq b\leq t$ and all choices of equal-sized subsets
$S_i\subseteq W_i$ with $|W_i\setminus S_i|\leq1$ for $a\leq i\leq b$,
the sequence $(S_a,\ldots,S_b)$ is a pipeline of length $b-a$.
\end{definition}

The following lemma combines Proposition~\ref{prop:random-typical} and
Theorem~\ref{thm:lemma8} with \cite[Theorem~6]{HT}.

\begin{lemma}[Robust pipeline construction]\label{lem:robust-pipeline}
There exists an absolute constant $C>0$ such that the following
holds. Let $0<\nu\leq\tau\leq\frac{\eta}{8}<\frac{1}{8}$, let
$n\geq\exp(\frac{C}{\nu^4})$, and let $G$ be an $n$-vertex robust
$(\nu,\tau)$-outexpander with $\delta^0(G)\geq\eta n$. Set
$s=\lceil(\log n)^9\rceil$, and let $U\subseteq V(G)$,
$T\in\mathbb N$, and $J\subseteq\{0,\ldots,T\}$. If
$$
 |V(G)\setminus U|\leq\frac{\nu^3n}{4},~
 3\leq T<\frac{n}{8}\text{,~and~}
 (T+1)s+|J|\leq |U|,
$$
then $U$ contains a robust pipeline
$W_0,\ldots,W_T$ such that
$|W_i|=s+1$ precisely when $i\in J$.
Moreover, let $a,b$ be integers with $0\leq a\leq b\leq T$, and let
$S_i\subseteq W_i$, $a\leq i\leq b$, have common order $w$, with
$|W_i\setminus S_i|\leq1$ for all $a\leq i\leq b$.
The sets $S_a,\ldots,S_b$ are $(\cC,\frac{\nu}{24})$-typical, where
$\cC$ is the container family from Corollary~\ref{lem:containers}.
Set $t=b-a$. If $t\geq2\lceil\frac{16}{\nu}\log_2w\rceil$, then, for
all oriented paths $P_1,\ldots,P_w$ of length $t$, all but at most
three vertices $v\in S_b$ have the following property: there exists
a family of vertex-disjoint copies of $P_1,\ldots,P_w$ through
$S_a,\ldots,S_b$, using one vertex of each layer on each path, in
which the copy of $P_w$ ends at $v$.
\end{lemma}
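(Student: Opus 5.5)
We construct the robust pipeline by random sampling inside $U$. First apply Corollary~\ref{lem:containers} to $G$ to obtain the container family $\cC$ with $|\cC|\leq n^{48/\nu^2}$. We then choose, uniformly at random inside $U$, an ordered family of pairwise disjoint sets $W_0,\ldots,W_T$ with $|W_i|=s+1$ for $i\in J$ and $|W_i|=s$ otherwise. This is possible since $(T+1)s+|J|\leq|U|$.

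The sets $W_i$ are uniform subsets of $U$, not of $V(G)$. Since $|V(G)\setminus U|\leq\frac{\nu^3n}{4}$, a set typical with respect to $G[U]$-normalised counts is still typical for $G$ with slightly worse error. Alternatively, we apply Lemma~\ref{lem:chernoff-hoeffding} directly in $U$: the distortion in $\frac{|R\cap U|}{|U|}$ versus $\frac{|R|}{n}$ is at most $\nu^3$, which is absorbed into $\frac{\nu}{48}$ slack. So we run Proposition~\ref{prop:random-typical}(i) with $\gamma=\frac{\nu}{48}$ on each $W_i$. We also need typicality of every $S_i\subseteq W_i$ with $|W_i\setminus S_i|\leq1$. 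Deleting one vertex changes each count by at most $1$, and $1\leq\frac{\nu}{48}s$ since $s=\lceil(\log n)^9\rceil$ and $\log n\geq C/\nu^4$. Hence every such $S_i$ is $(\cC,\frac{\nu}{24})$-typical.

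The failure probability is at most
\[
(T+1)(2n+2n^{48/\nu^2})e^{-2(\nu/48)^2s}\leq n^{50/\nu^2}e^{-\nu^2(\log n)^9/1152}<1,
\]
because $(\log n)^8\gg\nu^{-4}\cdot 1152\cdot 50$ once $C$ is large. Fix such an outcome.

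Next we obtain the pipeline property. By Theorem~\ref{thm:lemma8}, every pair $(S_i,S_{i+1})$ of equal-sized typical subsets is $\frac{\nu}{4}$-expanding. We then invoke \cite[Theorem~6]{HT}, which says that a sequence of sets whose consecutive pairs are $\lambda$-expanding (equivalently, matched pairs satisfying the Hall-type condition in both directions) forms a pipeline. This gives that $(S_a,\ldots,S_b)$ is a pipeline of length $b-a$. Thus $W_0,\ldots,W_T$ is a robust pipeline.

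The quantitative ``moreover'' part is the main obstacle. We must revisit the proof of \cite[Theorem~6]{HT} with $\lambda=\frac{\nu}{4}$. Its argument routes the $w$ paths layer by layer, using the expansion to propagate reachable sets. From any $A\subseteq S_i$ of size at most $w/2$, the appropriate neighbourhood in $S_{i+1}$ grows by the factor $1+\frac{\nu}{4}$. After $\lceil\frac{16}{\nu}\log_2 w\rceil\geq\log_{1+\nu/4}w$ steps, a singleton's reachable set exceeds half of the layer. Running the same process backwards from the end layer over a second block of the same length, the two halves meet via Hall's theorem.

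We would adapt the switching argument of \cite{HT}, which alternates forward and backward matchings and uses the two-sided condition in Definition of $\lambda$-expanding. The aim is to show that the set of endpoints in $S_b$ attainable by the copy of $P_w$ has size at least $w-3$. The exceptional vertices arise only from the boundary cases where $|A|$ is near $w/2$ and rounding enters; this is where we expect the careful bookkeeping. If the direct argument gives only that more than $w/2$ endpoints are reachable, we would first try boosting via the backward expansion from the complement: a set of unreachable endpoints of size greater than $3$ would, by in-expansion of $(S_{b-1},S_b)$ iterated over the second half of the layers, yield a contradiction with the forward reachable set.
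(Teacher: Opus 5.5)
\textbf{There is a genuine gap: the pipeline step misreads \cite[Theorem~6]{HT}, and the ``moreover'' part is not proved.}

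Your sampling step is fine. Choosing the layers uniformly inside $U$ and applying Lemma~\ref{lem:chernoff-hoeffding} there does make every $W_i$, and every $S_i$ with $|W_i\setminus S_i|\le 1$, $(\cC,\frac{\nu}{24})$-typical.

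\textbf{The pipeline step.} The hypothesis of \cite[Theorem~6]{HT} is not that consecutive pairs are $\lambda$-expanding. It is that they are $(\lambda,r,\alpha)$-matched pairs, and this is not ``equivalent'' to having Hall matchings in both directions. The essential part of that condition is the following. Delete up to $h\le\frac{64r}{\nu}\log_2w$ vertices from two consecutive layers, leaving $X,Y$. Then for $u=0$, $u=w-h$ and all $r\le u\le w-h-r$, all but an $\alpha$-proportion (here $\alpha=\frac{2}{n}$) of pairs of $u$-subsets $A\subseteq X$, $B\subseteq Y$ must admit perfect matchings directed from $A$ to $B$ and from $Y\setminus B$ to $X\setminus A$.

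This is what handles a transition at which some paths need forward edges and others backward edges. It cannot be bypassed, since the robust pipeline property is required for all $0\le a\le b\le T$, including $b-a=1$, where it is exactly such a split matching. Expansion of $(X,Y)$ says nothing about a prescribed pair $A,B$; there may be no edges from $A$ to $B$ at all.

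The paper supplies this via Proposition~\ref{prop:random-typical}(ii), with $r=\Theta(\nu^{-4}\log n)$. Simultaneously for all deletions of at most $h_0+1$ vertices, almost all $u$-subsets and their complements are typical. Theorem~\ref{thm:lemma8} applied to $(A,B)$ and $(Y\setminus B,X\setminus A)$, together with Hall's theorem, then gives both matchings. Nothing in your proposal establishes typicality of such subsets.

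You could recover it from your typical layers by applying Lemma~\ref{lem:chernoff-hoeffding} again inside $X$. A uniform $u$-subset of a $(\cC,\gamma)$-typical set is $(\cC,2\gamma)$-typical except with probability at most $(2n+2|\cC|)e^{-2\gamma^2u}$, which is small for $u\ge r$. But this step has to be made and checked against $\alpha=\frac{2}{n}$.

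\textbf{The ``moreover'' part.} Your sketch tracks where a single path can reach by iterated expansion. The required property, however, concerns a simultaneous embedding of all $w$ paths in which $P_w$ ends at $v$. The constant three has nothing to do with rounding near $\frac{w}{2}$.

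The paper's argument is a short count. The proof of \cite[Theorem~6]{HT} with $\alpha=\frac{2}{n}$ (which again needs the matched-pair condition) shows the following: at least $(1-\frac{4t}{n})w!$ bijections $f\colon[w]\to S_b$ are realized by a family of disjoint copies in which the copy of $P_j$ ends at $f(j)$ for all $j$. A vertex $v$ that can never be the endpoint of $P_w$ rules out all $(w-1)!$ bijections with $f(w)=v$. Hence there are at most $\frac{4tw}{n}<4$ such vertices, using $(t+1)w\le n$ from the disjointness of the layers.
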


\begin{proof}
Set $N=|U|$, $K=\frac{96}{\nu^2}$, and $\gamma=\frac{\nu}{48}$.
To apply Proposition~\ref{prop:random-typical} inside $U$, let $\cC_U$
consist of the pairs $(P\cap U,Q\cap U)$, $(P,Q)\in\cC$, together
with $(R,R)$, where $R=U\setminus N_G^\sigma(v)$,
for all $v\in V(G)$ and $\sigma\in\{+,-\}$. For a
$(\cC_U,\gamma)$-typical set $S\subseteq U$, these pairs give
$|N_G^\sigma(v)\cap S|\geq
(\frac{|N_G^\sigma(v)\cap U|}{N}-\gamma)|S|$
for all $v\in V(G)$ and $\sigma\in\{+,-\}$.
For all $R\subseteq V(G)$,
$$
 \left|\frac{|R\cap U|}{N}-\frac{|R|}{n}\right|
 \leq\frac{n-N}{n}\leq\frac{\nu^3}{4}<\gamma.
$$
Thus every $(\cC_U,\gamma)$-typical set in $G[U]$ is
$(\cC,\frac{\nu}{24})$-typical in $G$. Since $N\geq\frac{n}{2}$,
we also have $|\cC_U|\leq n^{48/\nu^2}+2n\leq N^K$ for sufficiently
large $n$.

Choose
$$
 r=\left\lceil\frac{2(K+30)}{\gamma^2}\log n\right\rceil,~
 h_0=\frac{64r}{\nu}\log_2(s+1).
$$
For sufficiently large $C$, Proposition~\ref{prop:random-typical}(ii)
applies to $G[U]$ and $\cC_U$ with either sample order $s$ or $s+1$.
We also have $h_0+1<(\log N)^3$, $h_0\leq s-1$, and
$\frac{32}{\nu}\log_2(s+1)<\frac{n}{2}$.

Choose uniformly an ordered family of pairwise disjoint sets
$W_0,\ldots,W_T\subseteq U$ with $|W_i|=s+1$ for $i\in J$ and
$|W_i|=s$ otherwise. Each $W_i$ is a uniform subset of $U$ of its
prescribed order. Proposition~\ref{prop:random-typical}(ii) therefore
gives, with failure probability at most
$\frac{1}{N}$ per layer, the following property simultaneously for
all $H\subseteq W_i$ with $|H|\leq h_0+1$: if $w'=|W_i\setminus H|$,
then at least $\left(1-\frac{1}{2N}\right)\binom{w'}{u}$ of its
$u$-subsets are $(\cC,\frac{\nu}{24})$-typical in $G$ whenever
$u=0$, $u=w'$, or $r\leq u\leq w'-r$. A union bound over the layers
has failure probability at most $\frac{T+1}{N}<1$, so fix a family for
which this property holds in every layer. In particular, taking $u=w'$
shows that every whole set left after such a deletion is typical.

Fix $a,b$ and sets $S_a,\ldots,S_b$ of common order $w$ as in the
statement, and set $t=b-a$.
We have $w\in\{s-1,s,s+1\}$. We verify the
matched-pair conditions of~\cite{HT} for two consecutive remaining layers.
Delete a further $h$ vertices from each, where
$h\leq\frac{64r}{\nu}\log_2w$, and call the resulting sets $X,Y$.
At most $h+1\leq h_0+1$ vertices have been deleted from each original
layer, so $X,Y$ are typical and Theorem~\ref{thm:lemma8} makes
$(X,Y)$ $\frac{\nu}{4}$-expanding.

Let $u=0$, $u=w-h$, or $r\leq u\leq w-h-r$.
In each of $X$ and $Y$, at least a $1-\frac{1}{N}$ proportion of the
$u$-subsets have both the subset and its complement typical: each of
these two conditions fails for at most a $\frac{1}{2N}$ proportion.
For all such $A\subseteq X$ and $B\subseteq Y$,
Theorem~\ref{thm:lemma8} and Hall's theorem give perfect matchings
directed from $A$ to $B$ and from $Y\setminus B$ to $X\setminus A$;
this is the argument of~\cite[Lemma~5]{HT}.
Since $\frac{1}{N}\leq\frac{2}{n}$, this is precisely the matching
condition for a $(\frac{\nu}{4},r,\frac{2}{n})$-matched pair.
The other two conditions in that definition are
$\frac{64r}{\nu}\log_2w\leq w$ and
$\frac{32}{\nu}\log_2w<\frac{n}{2}$,
both of which follow from the bounds on $h_0$ and $s$ above.

The proof of~\cite[Theorem~6]{HT} uses only these expansion and matching
conditions, so it applies to digraphs as well as oriented graphs.
Since $t\leq T<\frac{n}{4}$, that theorem shows that the remaining
layers form a pipeline.

For the final assertion, suppose
$t\geq2\lceil\frac{16}{\nu}\log_2w\rceil$ and fix oriented paths
$P_1,\ldots,P_w$ of length $t$. The counting argument in that proof,
with $\alpha=\frac{2}{n}$, gives at least $(1-\frac{4t}{n})w!$
bijections $f:[w]\to S_b$ for which there exists a family of
vertex-disjoint copies of $P_1,\ldots,P_w$ through the layers with
the copy of $P_j$ ending at $f(j)$ for all $j\in[w]$.
If $v\in S_b$ cannot be the last vertex of the copy of $P_w$ in
such a family, all $(w-1)!$ bijections with $f(w)=v$ are excluded.
Thus the number of such vertices is at most $\frac{4tw}{n}<4$,
where the strict inequality follows from $(t+1)w\leq n$.
\end{proof}

\subsection{From Hall-type inheritance to universal
Hamilton-connectivity}
\label{sec:HT}

We adapt the construction from \cite[Theorem~12]{HT}, using the sorters
and pipelines supplied by Lemmas~\ref{lem:sorters}
and~\ref{lem:robust-pipeline}, to obtain a spanning oriented path with
prescribed first and last vertices.

\begin{proof}[Proof of Theorem~\ref{thm:main}]
Let $P=p_1p_2\cdots p_n$ be an oriented path and let $x,y\in V(G)$ be
distinct. Write $v_x=p_1$ and $v_y=p_n$; these are the vertices of $P$
which will be mapped to $x$ and $y$, respectively.
We will select disjoint subpaths with common initial and terminal
orientation patterns. Two sorters will realize these patterns, while a
pipeline will contain the equal-length middle paths. The intervening
paths may have different lengths; we join them temporarily into a single
path and embed it using a second pipeline and short connecting paths.
Extra vertices in selected layers of the first pipeline supply these
connections and the connections to the sorters.

Set
$$
 s=\left\lceil(\log n)^9\right\rceil,~
 q=\left\lceil\frac{1}{\nu^3}\right\rceil,~
 d=\left\lceil(\log_2s)^2\right\rceil,~
 \ell=(q+1)d,~
 k=\left\lceil\frac{16}{\nu}\log_2s\right\rceil+2,~
 c=5s(k-1).
$$
Here $s$ is the number of selected subpaths and the width of each sorter
and of the first pipeline. In the construction of
Lemma~\ref{lem:sorters}, there are $d$ sorting stages, each of length
$q+1$, so the paths through each sorter have length $\ell$.
The connecting paths have length $k$, and $c$ counts the internal
vertices of these $5s$ paths:
$s$ of them join the pieces routed through the second pipeline, and
$4s$ of them connect the remaining pieces to the sorters and the first pipeline.
The congruence below will allow us to split the auxiliary path formed
from the intervening pieces into $s+1$ equal-length paths, $s$ connecting
paths of length $k$, and its two end edges.
Choose $t$ with
\begin{equation}\label{eq:direct-congruence}
 c+1\leq t\leq c+s+1
 ~\text{and}~
 n-4+2\ell+5k+t\equiv0\pmod{s+1},
\end{equation}
which is possible since the indicated interval contains $s+1$
consecutive integers, and set $p=2\ell+2k+t$.
Take $K_2$ sufficiently large that Lemmas~\ref{lem:sorters}
and~\ref{lem:robust-pipeline} apply and the estimates below hold. In particular,
\begin{equation}\label{eq:direct-selection}
 \frac{1}{2}\left(\left\lfloor\frac{n-1}{p}\right\rfloor-3\right)
 >s2^{2\ell+2k}.
\end{equation}
Indeed, set $L=\log n$. The definitions give
$\ell=O(\frac{(\log L)^2}{\nu^3})$,
$k=O(\frac{\log L}{\nu})$, and $\log(sp)=O(\log L)$,
with absolute implied constants. Since $L\geq\frac{K_2}{\nu^4}$,
$$
 \frac{(2\ell+2k)\log 2+\log(8sp)}{L}
 =O\left(\frac{(\log K_2)^2}{K_2}\right)
$$
uniformly in $\nu$. Taking $K_2$ sufficiently large gives
$n>8sp2^{2\ell+2k}$, which implies~\eqref{eq:direct-selection}.

Split the first $mp$ edges of $P$, where
$m=\lfloor\frac{n-1}{p}\rfloor$, into consecutive paths
$Z_1,\ldots,Z_m$ of length $p$. Discard $Z_1$ and $Z_m$, and retain
either those with even indices or those with odd indices, whichever class
is larger. The paths in the retained class are pairwise vertex-disjoint,
with at least $p$ edges between consecutive paths, before the first,
and after the last. By~\eqref{eq:direct-selection} and the pigeonhole
principle, the retained class contains paths
$P'_1,\ldots,P'_s$, listed in their order along $P$, whose first and last
$\ell+k$ edges have the same orientation patterns. Let
$Q'_1,\ldots,Q'_{s+1}$ be the intervening paths, including the initial
and final paths, so that
$$
 P=Q'_1P'_1Q'_2P'_2\cdots Q'_sP'_sQ'_{s+1}.
$$
Each $Q'_j$ has length at least $p$. Write $P'_j=A_jB_jP_jC_jD_j$,
where $A_j,D_j$ have length $\ell$, $B_j,C_j$ have length $k$, and
$P_j$ has length $t$. The paths $A_j$ are mutually isomorphic, and
likewise the paths $B_j$, $C_j$, and $D_j$.

Similarly, write
$$
 Q'_1=Q_1F_1,~
 Q'_j=E_jQ_jF_j~(2\leq j\leq s),~
 Q'_{s+1}=E_{s+1}Q_{s+1},
$$
where all $E_j,F_j$ have length $k$. The first vertex of $Q_1$ is
$v_x$, and the last vertex of $Q_{s+1}$ is $v_y$.
Figure~\ref{fig:path-decomposition} shows this decomposition.

\begin{figure}[htbp]
\centering
\begingroup
\def\HTFigurePage{1}
\input{HT_figures.tex}
\endgroup
\caption{The decomposition of the prescribed oriented path $P$.}
\label{fig:path-decomposition}
\end{figure}

For all $j\in[s]$, add an edge $e_j$ of arbitrary orientation between
the last vertex of $Q_j$ and the first vertex of $Q_{j+1}$, and let
$
 Q=Q_1e_1Q_2e_2\cdots Q_se_sQ_{s+1}.
$
This oriented path has length $M=n+s-1-sp-2sk$. We shall construct a
copy of $Q$ and then delete the edges corresponding to
$e_1,\ldots,e_s$.
By~\eqref{eq:direct-congruence}, $u=\frac{M-2-sk}{s+1}$ is an integer,
so $M=2+(s+1)u+sk$. Direct substitution gives
\begin{equation}\label{eq:direct-residual}
 n-2s(\ell+1)-2-(u+1)(s+1)-(t+1)s=c.
\end{equation}
The subtracted terms account for the two sorters, the endpoints $x,y$,
and the two pipelines before selected layers are enlarged; the remaining
$c$ vertices are reserved for the interiors of the connecting paths.

Lemma~\ref{lem:sorters} gives vertex-disjoint sorters
$\Sigma_A,\Sigma_D$ for the common orientation patterns of the $A_j,D_j$,
avoiding $x$ and $y$.

Let $U$ be the set of unused vertices other than $x$ and $y$. The choice
of $K_2$ gives
$|V(G)\setminus U|\leq\frac{\nu^3n}{4}$ and
$3\leq u,t$, $u+t+1<\frac{n}{8}$, and $(\eta-\frac{\nu}{24})s>3$. Set
$$
 T=u+t+1,~
 J_P=\{u+2,\ldots,u+c+1\},~
 J=\{0,\ldots,u\}\cup J_P.
$$
By~\eqref{eq:direct-residual},
$|U|=(T+1)s+|J|$. Apply Lemma~\ref{lem:robust-pipeline} to obtain a
robust pipeline $W_0,\ldots,W_T$ whose $(s+1)$-sets are precisely those
indexed by $J$. By Definition~\ref{def:robust-pipeline},
$\Pi_R=(W_0,\ldots,W_u)$ and $\Pi_P=(W_{u+1},\ldots,W_T)$
are respectively a pipeline of width $s+1$ and a robust pipeline of
width $s$, of lengths $u$ and $t$. For $u+1\leq i\leq T$, choose an
$s$-subset $S_i\subseteq W_i$. By Lemma~\ref{lem:robust-pipeline},
each $S_i$ is $(\cC,\frac{\nu}{24})$-typical, and by
Theorem~\ref{thm:lemma8}, each consecutive pair $(S_i,S_{i+1})$ is
$\frac{\nu}{4}$-expanding. Divide the first $s(k-1)$ indices of $J_P$ into $s$
consecutive blocks of $k-1$ indices.

\begin{claim}
The vertices $x,y$, the pipeline $\Pi_R$, and one vertex from each of
these $s(k-1)$ layers of width $(s+1)$ of $\Pi_P$ contain pairwise vertex-disjoint
copies of $Q_1,\ldots,Q_{s+1}$ in which the copy of $Q_1$ starts at $x$
and the copy of $Q_{s+1}$ ends at $y$.
\end{claim}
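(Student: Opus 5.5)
The plan is to write $Q$ as a concatenation that matches the available layers exactly, embed the $s+1$ long pieces through $\Pi_R$, and then close the gaps with short connecting paths inside the reserved $(s+1)$-layers of $\Pi_P$. Since $M=2+(s+1)u+sk$, I would decompose
$$
Q=f_1R_1K_1R_2K_2\cdots K_sR_{s+1}f_2,
$$
where $f_1,f_2$ are single edges, each $R_j$ has length $u$, and each $K_j$ has length $k$. The vertex count is $2+(s+1)(u+1)+s(k-1)=M+1$. The edges $e_j$ lie somewhere inside this decomposition, but that is irrelevant because we embed $Q$ as a single oriented path. The target layout is as follows. Each $R_j$ uses one vertex from each of $W_0,\ldots,W_u$. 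The interior of $K_j$ uses one vertex from each of the $k-1$ layers in the $j$th block of $J_P$. The two end edges join $x$ to the first vertex of $R_1$, and the last vertex of $R_{s+1}$ to $y$. Since different $K_j$ use different blocks, the connecting paths never compete for vertices, and only one vertex of each width-$(s+1)$ layer is used.

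First I fix the ends. Every layer of $\Pi_R$ is $(\cC,\frac{\nu}{24})$-typical, by Lemma~\ref{lem:robust-pipeline} with $u=w'$ and no deletion. Hence \eqref{eq:degree-typical} gives $y$ at least $(\eta-\frac{\nu}{24})(s+1)>3$ appropriate neighbours in $W_u$, and likewise gives $x$ more than $3$ appropriate neighbours in $W_0$. Next I embed $R_1,\ldots,R_{s+1}$ through $\Pi_R$ so that the copy of $R_{s+1}$ ends at a chosen $y'\in W_u\cap N^{\mp}(y)$ and the copy of $R_1$ starts at a chosen $x'\in W_0\cap N^{\pm}(x)$. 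I would use the final assertion of Lemma~\ref{lem:robust-pipeline}, applicable since $u\ge 2\lceil\frac{16}{\nu}\log_2(s+1)\rceil$, which excludes at most three end vertices. To control both ends simultaneously, I would split $\Pi_R$ at a middle layer $W_{m}$ with both halves long enough, and apply the bijection-counting form of that argument in each half: the first half reversed (so that $R_1$ starts at $x'$), and the second half forwards (so that $R_{s+1}$ ends at $y'$). The middle layer is then glued by choosing compatible bijections. In the \cite[Theorem~6]{HT} counting, all but a $\frac{4t}{n}$-fraction of bijections are realizable on each side. A union bound over the two sides therefore leaves a bijection pattern realizable in both halves with the prescribed ends, and the paths can be concatenated at $W_m$.

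It remains to connect the end of the copy of $R_j$ (in $W_u$) to the start of the copy of $R_{j+1}$ (in $W_0$) by a copy of $K_j$ whose interior passes through the $j$th block $W_{i_1},\ldots,W_{i_{k-1}}$, taking one vertex per layer. For the first and last steps I use \eqref{eq:degree-typical}: the two fixed endpoints have at least $(\eta-\frac{\nu}{24})(s+1)\ge\frac{7\eta}{8}(s+1)$ appropriate neighbours in $W_{i_1}$ and in $W_{i_{k-1}}$, respectively. For the interior I grow reachable sets forwards from the first endpoint and backwards from the second, with each step respecting the orientation of the corresponding edge of $K_j$. Consecutive layers are $\frac{\nu}{4}$-expanding by Theorem~\ref{thm:lemma8}, so each reachable set grows by a factor $1+\frac{\nu}{4}$ until it exceeds half its layer. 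After $\lceil\frac{8}{\nu}\log_2(s+1)\rceil$ steps from each side, the two sets in the meeting layer both exceed half, so they intersect. The choice $k-1\ge\lceil\frac{16}{\nu}\log_2 s\rceil+1$ leaves enough layers for this. Tracing back through the intersection yields a path with exactly one vertex per layer of the block.

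The main obstacle is the second step, prescribing both $x'$ and $y'$ inside a single pipeline, since Lemma~\ref{lem:robust-pipeline} as stated controls only one end vertex. If the gluing via the bijection count becomes awkward, my first fallback is to exploit robustness instead. I would delete $x'$ from $W_0$, run the pipeline on $(W_0\setminus\{x'\},W_1,\ldots,W_u)$ with width $s$ for $R_2,\ldots,R_{s+1}$ and prescribed end $y'$. I would then route $R_1$ from $x'$ through the leftover vertex of each layer using the Hall/matching slack of the $(s+1)$-sets. Checking that this leftover-vertex path exists is where I expect the real work to lie.
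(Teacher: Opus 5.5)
\textbf{There is a genuine gap in your second step.} Your decomposition of $Q$ matches the paper. So does your handling of the connecting paths $K_j$: degree-typicality for the two end edges, then the two-sided expansion argument of \cite[Lemma~3]{HT} inside the $j$th block.

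The problem is how you prescribe both ends in $\Pi_R$. The final assertion of Lemma~\ref{lem:robust-pipeline}, and the bijection count in its proof, control only the \emph{terminal} layer of the segment they are applied to. The assignment of paths to the initial layer is only asserted to exist.
\begin{itemize}
\item In your split, you reverse the first half so that $W_0$ is terminal, and run the second half forwards so that $W_u$ is terminal. So $W_m$ is the uncontrolled initial layer of \emph{both} halves, and no union bound can produce a common assignment there.
\item If instead you run both halves into $W_m$, then the prescribed vertices $x'$ and $y'$ sit at the uncontrolled ends.
\end{itemize}
Gluing would need a two-ended statement, for instance that most bijections at $W_m$ extend to an embedding of the second half with $R_{s+1}$ ending at $y'$. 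Nothing in the paper gives this. Conversely, if the count held for every fixed initial bijection, the split would be unnecessary.

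Your fallback fails for a related reason. The layers $W_1,\ldots,W_u$ have order $s+1$, so there is no width-$s$ pipeline for $R_2,\ldots,R_{s+1}$ until you decide which vertex of each layer to omit. Nothing forces the omitted vertices to form a copy of $R_1$ starting at $x'$.

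\textbf{The fix is to run your fallback in the opposite order, which is what the paper does.}
\begin{itemize}
\item Embed $R_1$ \emph{first}, greedily from an appropriate neighbour $a\in W_0$ of $x$. By \eqref{eq:degree-typical}, every vertex has at least $(\eta-\frac{\nu}{24})(s+1)>0$ appropriate neighbours in the next layer, and the last vertex of $R_1$ is left free.
\item Removing the vertices of $R_1$ leaves $s$-sets $W_i'$ with $|W_i\setminus W_i'|=1$. By Definition~\ref{def:robust-pipeline} and Lemma~\ref{lem:robust-pipeline}, these are typical and form a pipeline of width $s$.
\item Since $u\geq2\lceil\frac{16}{\nu}\log_2s\rceil$, the final assertion of that lemma routes $R_2,\ldots,R_{s+1}$ through $W_0',\ldots,W_u'$. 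The copy of $R_{s+1}$ can end at any vertex of $W_u'$ outside an exceptional set of at most three vertices.
\item Typicality of $W_u'$ gives $y$ at least $(\eta-\frac{\nu}{24})s>3$ appropriate neighbours there, so one of them is a valid endpoint.
\end{itemize}
So only one end of the pipeline is ever prescribed. Note also that the neighbour of $y$ must be chosen after the deletion, not fixed in advance as in your plan. A preselected $y'$ could be the vertex used by $R_1$ or one of the at most three exceptions.
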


\begin{figure}[htbp]
\centering
\begingroup
\def\HTFigurePage{2}
\input{HT_figures.tex}
\endgroup
\caption{The construction of the oriented path $Q$.  The paths $R_i$ are routed through $\Pi_R$, while the paths
$H_i$ use one vertex from each of the first $s(k-1)$ layers of width $s+1$ in
$\Pi_P$.  After deleting the edges $e_1, \dots, e_s$, we obtain the desired paths $Q_1, \dots, Q_{s+1}$.}
\label{fig:claim-routing}
\end{figure}

\begin{claimproof}
In what follows, see Figure \ref{fig:claim-routing}.  Let $e_x$ and $e_y$ be the first and last edges of $Q$, respectively.
Since $M=2+(s+1)u+sk$, write
$$
 Q=e_xR_1H_1R_2H_2\cdots R_sH_sR_{s+1}e_y,
$$
where every $R_i$ has length $u$ and every $H_i$ has length $k$.
Since $W_0,\ldots,W_u$ are typical, we may choose an appropriate
neighbor $a\in W_0$ of $x$ for $e_x$ and embed $R_1$ greedily from $a$,
using one vertex from each layer. Indeed,~\eqref{eq:degree-typical}
gives at least $(\eta-\frac{\nu}{24})(s+1)>0$ choices for the next
vertex. Let $W_i'$ be the $s$-set left in $W_i$ after deleting the
vertex used by this copy of $R_1$, for all $0\leq i\leq u$.
By Lemma~\ref{lem:robust-pipeline}, these sets are typical.

Typicality of $W_u'$ gives at least $(\eta-\frac{\nu}{24})s>3$
appropriate neighbors of $y$ for $e_y$.
Since $u\geq2\lceil\frac{16}{\nu}\log_2s\rceil$,
Lemma~\ref{lem:robust-pipeline} therefore gives a family of
vertex-disjoint copies of $R_2,\ldots,R_{s+1}$ through
$W_0',\ldots,W_u'$ in which $R_{s+1}$ ends at an appropriate
neighbor $b$ of $y$. Adding the edges between
$x$ and $a$ and between $b$ and $y$ gives copies of $e_xR_1$ and
$R_{s+1}e_y$.

For all $i\in[s]$, assign the $i$th block of $k-1$ $(s+1)$-vertex layers to
$H_i$. Choose appropriate neighbors of the end of $R_i$ and the start
of $R_{i+1}$ in the first and last sets $S_j$ of this block; these
neighbors exist by typicality. Since consecutive pairs are
$\frac{\nu}{4}$-expanding and $k-2\geq\frac{16}{\nu}\log_2s$, the
argument of~\cite[Lemma~3]{HT} supplies the intervening path of length
$k-2$. Adding the two end edges gives a copy of $H_i$ using one vertex
from each assigned set.

We have now constructed a copy of $Q$. Deleting the edges corresponding
to $e_1,\ldots,e_s$ leaves pairwise vertex-disjoint copies of
$Q_1,\ldots,Q_{s+1}$. The construction used exactly $s(k-1)$ vertices
of $\Pi_P$, as claimed.
\end{claimproof}

Divide the remaining $4s(k-1)$ indices of $J_P$ into $4s$ consecutive
blocks of $k-1$ indices and construct the remaining paths of length $k$
in the same way, using one block per path. The copies of $F_j$
join the ends of $Q_j$ to distinct input vertices of $\Sigma_A$, and
the copies of $E_{j+1}$ join distinct output vertices of $\Sigma_D$ to
the starts of $Q_{j+1}$. The copies of $B_1$ join the output vertices of
$\Sigma_A$ bijectively to $W_{u+1}$, and the copies of $C_1$ join $W_T$
bijectively to the input vertices of $\Sigma_D$.

Delete the used vertex from each $W_i$ with $i\in J_P$, leaving every
other $W_i$ unchanged. By Definition~\ref{def:robust-pipeline}, the
remaining layers of $\Pi_P$ form a pipeline of width $s$. Embed
$P_1,\ldots,P_s$ through it, and label the copies of $B_1,C_1$ incident
with $P_j$ as $B_j,C_j$, respectively. Since $\Sigma_A,\Sigma_D$ are
sorters, choose their decompositions into copies of $A_j,D_j$ so that,
for all $j\in[s]$, the path
$F_jA_jB_jP_jC_jD_jE_{j+1}$ joins the end of $Q_j$ to the start of
$Q_{j+1}$. Together with $Q_1,\ldots,Q_{s+1}$, these paths form a copy
of $P$ from $x$ to $y$, as shown in Figure~\ref{fig:final-assembly}.

\begin{figure}[!htb]
\centering
\begingroup
\def\HTFigurePage{3}
\input{HT_figures.tex}
\endgroup
\caption{The final construction of the oriented path $P$. The dashed paths are built using the layers of width $s+1$ in $\Pi_P$, leaving behind a pipeline of width $s$.  The paths $P_1, \dots, P_s$ are then routed through this remaining pipeline.}
\label{fig:final-assembly}
\end{figure}

This copy uses every vertex of the two sorters and of $W_0,\ldots,W_T$,
as well as $x,y$, and hence spans $G$.

Finally, let $C$ be an orientation of a Hamilton cycle. Choose
any directed edge $yx\in E(G)$, choose an edge $uv$ of $C$ directed from
$u$ to $v$, and delete $uv$. Regard $v$ as the first vertex and $u$ as
the last vertex of the resulting oriented path. This path has a spanning
copy in $G$ from $x$ to $y$. Adding the edge $yx$, which plays the role
of $uv$, gives a copy of $C$.
\end{proof}

\section{Universal linkage in robust expanders}
\label{sec:linkage}

We first prove Theorem~\ref{thm:robust-inherit} and then use it together
with Theorem~\ref{thm:main} to adapt the DeBiasio--Treglown argument and
prove Theorem~\ref{thm:klinked}.

\subsection{Containing sparse pairs}

Lemma~\ref{lem:sparse-threshold} below is the sparse analogue of
Lemma~\ref{lem:threshold-container}: it extends the container algorithm to
sets which span a small number of edges by introducing an additional set
$X$ of controlled size. Container results for sets spanning few edges
appear in the general hypergraph framework of Saxton and
Thomason~\cite{STonline}. The elementary
graph algorithm used here is adapted from Balogh and
Li~\cite[Lemma~2.6]{BL}. The union bound includes the probability that all
vertices represented by a fingerprint lie in the random sample, as in
Blais and Seth~\cite[Section~3]{BlaisSeth}.

\begin{lemma}[Sparse-set container lemma]\label{lem:sparse-threshold}
Let $H$ be a graph on $N$ vertices, let $d>0$, let
$0<\theta<1$, and let
$M\geq0$. There exists a family $\mathcal F$ of ordered pairs of subsets
of $V(H)$ together with a map $C\colon\mathcal F\to2^{V(H)}$
such that
\begin{enumerate}
\item for all $S\subseteq V(H)$ with $e(H[S])\leq M$, there exists
      $(F,X)\in\mathcal F$ such that
      $F\cup X\subseteq S\subseteq C(F,X)$.
\item for all $(F,X)\in\mathcal F$, the sets $F$ and $X$ are disjoint and
      satisfy $|F|\leq \frac{N}{d}$ and
      $|X|\leq\theta N+\frac{M}{\theta d}$.
      Moreover, we have
      $\Delta(H[C(F,X)\setminus X])<d$.
\end{enumerate}
Consequently,
$$
 |\mathcal F|
 \leq
 \left(\sum_{i=0}^{\lfloor N/d\rfloor}\binom Ni\right)
 \left(\sum_{j=0}^{\left\lfloor\min\{N,\theta N+M/(\theta d)\}\right\rfloor}
             \binom Nj\right).
$$
Every container also satisfies
\begin{equation}\label{eq:sparse-container-edges}
 e\bigl(H[C(F,X)]\bigr)
 <\frac{Nd}{2}
   +N\left(\theta N+\frac{M}{\theta d}\right).
\end{equation}
\end{lemma}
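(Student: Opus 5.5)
We modify the Kleitman--Winston run from the proof of Lemma~\ref{lem:threshold-container} so that it tolerates a few edges inside $S$. Fix a linear order $\prec$ of $V(H)$ and let $S$ satisfy $e(H[S])\leq M$.

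\textbf{Preprocessing.} Define $X_0$ to be the set of vertices $v\in S$ with $|N(v)\cap S|\geq\theta d$; call these the \emph{high} vertices of $S$. Since $\sum_{v\in S}|N(v)\cap S|=2e(H[S])\leq 2M$, we get $|X_0|\leq\frac{2M}{\theta d}$. This factor of $2$ does not match the stated bound, so I would instead count each edge only at its $\prec$-larger endpoint, or use a threshold that absorbs the constant. Failing that, following Balogh--Li, I would let $X$ collect vertices adaptively during the run: a vertex $v\in S$ is put into $X$ when it is selected with many neighbours in the current $S$-part. The term $\theta N$ in the bound on $|X|$ then suggests a second source for $X$, namely vertices of $S$ that are selected while having fewer than $\theta d$ of their neighbours inside $S$.

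\textbf{The main run.} Start with $U_0=V(H)\setminus X$ (or with $X$ built up along the way), $F_0=\emptyset$. While $\Delta(H[U_i])\geq d$, let $v_i$ be the $\prec$-first vertex of $U_i$ with $\deg_{H[U_i]}(v_i)\geq d$. There are two cases.
\begin{itemize}
\item If $v_i\notin S$, delete $v_i$ alone.
\item If $v_i\in S$, split according to $|N(v_i)\cap U_i\cap S|$. If this is less than $\theta d$, put $v_i$ into $F$ and delete $v_i$ together with $N(v_i)\cap U_i$. The roughly $\theta d$ vertices of $S$ lost this way must be recorded in $X$, and this is what produces the $\theta N$ term. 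Otherwise $v_i$ is high, so we place $v_i$ into $X$ and delete only $v_i$.
\end{itemize}
I would fix exactly which case feeds $X$ so that both bounds come out right.
\begin{itemize}
\item Each addition to $F$ removes at least $d$ vertices from disjoint pools, so $|F|\leq N/d$.
\item The vertices of $S$ removed as neighbours of fingerprint vertices number at most $\theta d\cdot|F|\leq\theta N$, and these are put into $X$.
\item The high selections number at most $\frac{M}{\theta d}$ up to the constant.
\end{itemize}
Together these give $|X|\leq\theta N+\frac{M}{\theta d}$.

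\textbf{Reconstruction and containment.} As in Lemma~\ref{lem:threshold-container}, the terminal set $U$ is determined by $(F,X)$: rerun the algorithm, treating $F$ as the vertices on which it branches to ``delete the neighbourhood'' and $X$ as marked vertices. By induction both runs select the same $v_i$ and make the same decisions, provided every decision depends only on membership in $F$ and $X$. Set $C(F,X)=F\cup X\cup U$. Then $F\cup X\subseteq S$, and every vertex of $S$ is in $F$, in $X$, or survives in $U$, so $S\subseteq C(F,X)$. For the maximum-degree condition on $C\setminus X=F\cup U$, the stopping rule gives $\Delta(H[U])<d$. A vertex of $F$ has no neighbours left in the final $U$, since its neighbourhood was deleted when it was selected. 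Two vertices of $F$ may still be adjacent, but each has degree below $N/d$ inside $F$ since $|F|\leq N/d$; where needed I would check that degrees stay below $d$.

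\textbf{Counting.} The count of $|\mathcal F|$ follows by choosing $F$ and $X$ separately. For the edge bound, note that $e(H[C])\leq e(H[C\setminus X])+|X|N<\frac{Nd}{2}+N\bigl(\theta N+\frac{M}{\theta d}\bigr)$.

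\textbf{Main obstacle.} The step I expect to be hardest is the determinism of the rerun. The algorithm's branching consults $S$ through quantities such as $|N(v_i)\cap U_i\cap S|$, which cannot be recovered from $(F,X)$ alone. So the algorithm must be arranged so that $S$ enters only through the membership tests $v_i\in F$ and $v_i\in X$, with $X$ recording every vertex of $S$ that would otherwise be deleted. Making this arrangement work while keeping the stated constants is the delicate part.
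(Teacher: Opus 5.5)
\textbf{Verdict.} Your three-case run is exactly the paper's algorithm, but the maximum-degree property is not established as you wrote it, and the bound on $X$ is left up to a constant. The rest carries over from the paper: the bound $|F|\le N/d$, the $\theta N$ contribution to $X$, the containment $F\cup X\subseteq S\subseteq C(F,X)$, the count of $\mathcal F$, and the edge bound \eqref{eq:sparse-container-edges}. Commit to this adaptive version and drop the static preprocessing.

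\textbf{The rerun is not an obstacle.} The update of $U_i$ uses $S$ only through whether $v_i$ enters $F$, since the other two cases both delete only $v_i$. Each vertex is selected at most once, so $v_i\in F$ exactly when the neighbourhood-deleting case occurred at that step. Hence the rerun that deletes the current neighbourhood precisely when $v_i\in F$ reproduces every $U_i$. It never needs $|N(v_i)\cap U_i\cap S|$ or $X$. The paper instead reruns with $F$ in place of $S$, which uses independence of $F$.

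\textbf{Gap 1: the maximum-degree property.} You assert that two vertices of $F$ may be adjacent, and you fall back on degrees at most $|F|\le N/d$ inside $F$. But $N/d$ need not be less than $d$, so this does not give $\Delta(H[C(F,X)\setminus X])<d$. The missing observation is that $F$ is always independent, even though $S$ is not. If $v$ enters $F$ at step $i$, then $N(v)\cap U_i$ is deleted. Every later selection lies in $U_{i+1}\subseteq U_i\setminus N(v)$, so it cannot be a neighbour of $v$. Consequently:
\begin{itemize}
\item every vertex of $F$ is isolated in $H[F\cup U]$;
\item every vertex of $U$ has no neighbour in $F$ and fewer than $d$ neighbours in $U$, by the stopping rule.
\end{itemize}
Together these give the required bound on $\Delta(H[C(F,X)\setminus X])$.

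\textbf{Gap 2: the constant in $|X|$.} You leave the bound on the third-case selections ``up to the constant'', but the lemma requires exactly $\frac{M}{\theta d}$. To get it, charge to each third-case vertex $v$ the set $E_v$ of edges from $v$ to $S\cap U_i$ at the moment $v$ is selected. These sets are pairwise disjoint, because any previously selected vertex has already left $U$. So each edge of $H[S]$ is charged at most once, and $\theta d\,|X^{\rm sel}|\le e(H[S])\le M$.

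Your static version with $X_0$ only gives $\frac{2M}{\theta d}$. Counting each edge at its $\prec$-larger endpoint does not fit the algorithm's case split either. The selection order is what makes the charging injective.
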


\begin{proof}
Fix a linear ordering $\prec$ of $V(H)$. For all sets
$S\subseteq V(H)$ satisfying $e(H[S])\leq M$, we define sets
$U_i,F_i,X_i$ recursively. Begin with $U_0=V(H)$ and
$F_0=X_0=\emptyset$.
Suppose that $U_i,F_i,X_i$ have been defined. If
$\Delta(H[U_i])<d$, stop and set $U_S=U_i$, $F_S=F_i$, and $X_S=X_i$.
Otherwise, let $v_i$ be the $\prec$-first vertex of $U_i$ whose degree in
$H[U_i]$ is at least $d$. Update the three sets according to the
following three cases:  If $v_i\notin S$, delete only $v_i$. If $v_i\in S$ and
$|N_{H[U_i]}(v_i)\cap S|<\theta d$,
add $v_i$ to $F_i$, add $N_{H[U_i]}(v_i)\cap S$ to $X_i$, and delete
$v_i$ together with all its neighbors in $U_i$. Finally, if $v_i\in S$
and $|N_{H[U_i]}(v_i)\cap S|\geq\theta d$,
add $v_i$ to $X_i$ and delete only $v_i$. All sets not mentioned in
a case are left unchanged.  We summarize these cases in the following table:
$$
\setlength{\arraycolsep}{4pt}
\begin{array}{c|c|c|c}
 \text{condition} & U_{i+1} & F_{i+1} & X_{i+1}\\
 \hline
 v_i\notin S
   &U_i\setminus\{v_i\}
   &F_i
   &X_i\\[2mm]
 \substack{v_i\in S,\\
 |N_{H[U_i]}(v_i)\cap S|<\theta d}
   &U_i\setminus\bigl(\{v_i\}\cup N(v_i)\bigr)
   &F_i\cup\{v_i\}
   &X_i\cup\bigl(N_{H[U_i]}(v_i)\cap S\bigr)\\[3mm]
 \substack{v_i\in S,\\
 |N_{H[U_i]}(v_i)\cap S|\geq\theta d}
   &U_i\setminus\{v_i\}
   &F_i
   &X_i\cup\{v_i\}
\end{array}
$$
At least one vertex is deleted at every step, so the algorithm terminates. Note that $F_S$ is independent, since whenever a vertex is added to $F_S$, all of its current neighbors are deleted.

Whenever a vertex is added to $F_S$, at least $d$ vertices of the
current set $U_i$ are deleted as its neighbors. The sets deleted at different
such steps are disjoint, and hence $|F_S|\leq \frac{N}{d}$.
A member of $F_S$ contributes fewer than $\theta d$ neighbors to $X_S$,
so these additions contribute at most
$\theta d|F_S|\leq\theta N$ vertices. Let $X_S^{\rm sel}$ be the set of
vertices added to $X_S$ in the third case. For all $v\in X_S^{\rm sel}$,
let $E_v$ be the set of edges from $v$ to vertices of $S$ which belong to
the current set $U_i$ when $v$ is selected. We have $|E_v|\geq\theta d$. Moreover,
the sets $E_v$, for $v\in X_S^{\rm sel}$, are pairwise disjoint. Indeed,
if $v$ is selected before $w$, then $v$ has already been deleted when $w$
is selected, so the edge $vw$ cannot belong to $E_w$. It follows that
$$
 \theta d|X_S^{\rm sel}|
 \leq\sum_{v\in X_S^{\rm sel}}|E_v|
 \leq e(H[S])
 \leq M.
$$
Thus $|X_S^{\rm sel}|\leq \frac{M}{\theta d}$, and consequently $|X_S|\leq\theta N+\frac{M}{\theta d}$.

As in the proof of Lemma~\ref{lem:threshold-container}, rerunning the
algorithm with $F_S$ in place of $S$ gives the same current sets.
Indeed, if the current sets agree, both runs select the same vertex $v_i$.
If $v_i\in F_S$, the original run used the second case, and so does the
second run because $F_S$ is independent; both delete $v_i$ and its current
neighbors. Otherwise, the original run used the first or third case,
and both delete only $v_i$. Induction therefore shows that the terminal
set $U_S$ is determined by $F_S$; write it as $U_{F_S}$.

Let $\mathcal F$ be the family of fingerprints
$(F_S,X_S)$ arising from sets $S$ with
$e(H[S])\leq M$, and define
$C(F,X)=F\cup X\cup U_F$.
The two members of a fingerprint are disjoint and are contained in $S$.
A vertex of $S$ which is selected belongs to $F_S$ or $X_S$, while a
vertex of $S$ deleted as a neighbor of a member of $F_S$ is placed in
$X_S$. Every other vertex of $S$ remains in the
terminal set. Thus $F_S\cup X_S\subseteq S\subseteq C(F_S,X_S)$.
Every vertex of $F$ is isolated in $H[C(F,X)\setminus X]$: the set $F$
is independent, and all remaining neighbors of a fingerprint vertex are
deleted when that vertex is selected. Since
$C(F,X)\setminus X=F\cup U_F$, while the stopping rule gives
$\Delta(H[U_F])<d$, this proves the maximum-degree assertion. Counting the possible choices
for the two members of a fingerprint gives the stated bound on
$|\mathcal F|$.

Finally, the maximum-degree assertion gives
$e(H[C(F,X)\setminus X])<\frac{Nd}{2}$. Every remaining edge of
$H[C(F,X)]$ is incident with $X$, so there are at most $N|X|$ such edges.
The bound on $|X|$ now gives
\eqref{eq:sparse-container-edges}.
\end{proof}

\subsection{Inheritance of robust expansion}\label{sec:inherit_robust}

\begin{proof}[Proof of Theorem~\ref{thm:robust-inherit}]
A failure of robust expansion in the sample gives a sparse ordered pair,
which we place in one of the containers from
Lemma~\ref{lem:sparse-threshold}. We include the probability that its
fingerprint lies in the sample when summing over containers; this is
what allows the estimate to remain useful for small samples.

Choose $K_1$ sufficiently large, and set
$\zeta=(\frac{16}{\nu^2}+1)\sqrt\rho$.
We may assume that, for all $0<\nu<\frac{1}{2}$,
\begin{equation}\label{eq:log-inheritance-parameters}
 \rho+4\zeta\leq\frac{\nu}{4}
 ~\text{and}~
 2\zeta\left(1+\log\frac{9e}{2\zeta}\right)
 \leq\frac{\nu^2}{64}.
\end{equation}
Indeed, $\zeta=\frac{(16+\nu^2)\nu^2}{\sqrt{K_1}\log(K_1/\nu)}$, and, for large
$K_1$, we have $1+\log(\frac{9e}{2\zeta})\leq4\log(\frac{K_1}{\nu})$. The second left-hand
side is therefore at most $\frac{136\nu^2}{\sqrt{K_1}}$, and the first
inequality follows from the same expression for $\zeta$.

Apply Lemma~\ref{lem:sparse-threshold} to the bipartite representation
$H_G$ with parameters
$$
 d=\frac{\nu^2n}{8},~
 M=\rho m^2,~
 \theta=\frac{8\sqrt\rho\,m}{\nu^2n}.
$$
The choice of $K_1$ gives $0<\theta<1$. For all fingerprints $(F,X)\in\mathcal F$, let
$P=\{v:v_L\in C(F,X)\}$ and $Q=\{v:v_R\in C(F,X)\}$.
Since $H_G$ has $2n$ vertices, the fingerprint bounds give
$|F|\leq\frac{16}{\nu^2}\leq\zeta m$ and $|X|\leq\zeta m$.
Since \eqref{eq:log-inheritance-parameters} gives
$2\zeta\leq\frac{\nu^2}{64}$ and $m\leq n$,
\eqref{eq:sparse-container-edges} implies
$$
 e_G(P,Q)<\frac{\nu^2n^2}{8}+2\zeta mn
 <\frac{\nu^2n^2}{4}.
$$

If $G[W]$ is not a robust $(\rho,2\tau)$-outexpander, choose
$A\subseteq W$ with $2\tau m\leq|A|\leq(1-2\tau)m$ and
$|\RN_{\rho,G[W]}(A)|<|A|+\rho m$. Set
$B=W\setminus\RN_{\rho,G[W]}(A)$. We have
$$
 e_G(A,B)<\rho m^2,~
 |B|>(2\tau-\rho)m,~
 |A|+|B|>(1-\rho)m.
$$
The set $A_L\cup B_R$ spans $e_G(A,B)<M$ edges in $H_G$, so there exists
$(F,X)\in\mathcal F$ such that
$F\cup X\subseteq A_L\cup B_R\subseteq C(F,X)$.
Thus $A\subseteq W\cap P$ and $B\subseteq W\cap Q$, and every vertex
represented in $F\cup X$ lies in $W$.

Fix a fingerprint $(F,X)$, let $\pi(v_L)=\pi(v_R)=v$, and set
$T=\pi(F\cup X)$ and $t=|T|\leq2\zeta m$. Conditional on
$T\subseteq W$, the set $W\setminus T$ is a uniformly random
$(m-t)$-subset of $V(G)\setminus T$. For either $U=P$ or $U=Q$,
$$
 \mathbb E(|W\cap U|\mid T\subseteq W)
 =|T\cap U|+\frac{m-t}{n-t}|U\setminus T|
 \leq t+\frac{m}{n}|U|.
$$
Lemma~\ref{lem:chernoff-hoeffding} therefore shows that, except with
conditional probability at most $2e^{-\nu^2m/32}$, both inequalities
$$
 |W\cap P|\leq\frac{m}{n}|P|+t+\frac{\nu m}{8}
 ~\text{and}~
 |W\cap Q|\leq\frac{m}{n}|Q|+t+\frac{\nu m}{8}
$$
hold. If such sets $A$ and $B$ lie in $P$ and $Q$, respectively,
these inequalities and \eqref{eq:log-inheritance-parameters} imply
\begin{align*}
|P|\geq(2\tau-2\zeta-\frac{\nu}{8})n\geq\tau n,~ |Q|>(2\tau-\rho-2\zeta-\frac{\nu}{8})n\geq\tau n\text{,~and}\\
 |P|+|Q|>(1-\rho-4\zeta-\frac{\nu}{4})n\geq(1-\frac{\nu}{2})n.
\end{align*}
Lemma~\ref{lem:density} would then give $e_G(P,Q)\geq\frac{\nu^2n^2}{4}$,
contrary to $e_G(P,Q)<\frac{\nu^2n^2}{4}$. Hence, conditional on $T\subseteq W$,
the probability that this container contains sets $A_L\cup B_R$ as
above is at most $2e^{-\nu^2m/32}$.

As in Blais and Seth~\cite[Section~3]{BlaisSeth}, we include the factor
$\Pr(T\subseteq W)$ in the union bound. For a fixed $t$-set $T$, at
most $9^t$ fingerprints satisfy $\pi(F\cup X)=T$, since each of the two
copies of every vertex in $T$ can lie in $F$, in $X$, or in neither set.
Since
$\Pr(T\subseteq W)\leq(\frac{m}{n})^t$, we obtain
\begin{align*}
 \sum_{(F,X)\in\mathcal F}\Pr(\pi(F\cup X)\subseteq W)
 &\leq\sum_{t=0}^{\lfloor2\zeta m\rfloor}
       \binom nt9^t\left(\frac{m}{n}\right)^t\\
 &\leq(2\zeta m+1)
       \left(\frac{9e}{2\zeta}\right)^{2\zeta m}
 \leq\exp(\frac{\nu^2m}{64}).
\end{align*}
The second inequality uses the monotonicity of
$x\log(\frac{9em}{x})$ for $0<x\leq2\zeta m$, and the last follows
from \eqref{eq:log-inheritance-parameters} and
$\log(2\zeta m+1)\leq2\zeta m$. Every failure of robust expansion
gives one of the fingerprints considered above, so
$$
 \Pr(G[W]\text{ is not a robust $(\rho,2\tau)$-outexpander})
 \leq2e^{-\nu^2m/32}e^{\nu^2m/64}
 =2e^{-\nu^2m/64}.
$$

Finally, for all $v\in V(G)$ and $\sigma\in\{+,-\}$,
Lemma~\ref{lem:chernoff-hoeffding} gives
$\Pr(|N^\sigma(v)\cap W|<\frac{\eta m}{2})\leq e^{-\eta^2m/2}$.
A union bound over the $2n$ choices proves the second estimate.
\end{proof}

\begin{remark}\label{rem:hall-growing-samples}
For fixed $\nu,\tau,\eta$, the pair $(S,S')$ in
Theorem~\ref{thm:hall-inherit} is $\frac{\nu}{4}$-expanding with
probability $1-o(1)$ whenever $m\to\infty$. Indeed, retain the fingerprints
$\mathcal F$ used in the proof of Corollary~\ref{lem:containers}, and set
$h=\lfloor\frac{16}{\nu^2}\rfloor$. For all $F\in\mathcal F$, let
$E_F=\{F\subseteq S_L\cup S'_R\}$. Since $|F|\leq h$ and
$|S_L\cup S'_R|=2m$,
$$
 \sum_{F\in\mathcal F}\Pr(E_F)
 \leq\sum_{j=0}^h\binom{2m}{j}
 \leq(h+1)(2m)^h.
$$
For a fingerprint with $\Pr(E_F)>0$, conditioning on $E_F$ fixes at most
$h$ vertices in prescribed parts. The means of $|S\cap P(F)|$ and
$|S'\cap Q(F)|$ consequently increase by at most $h$. If
$m\geq\frac{48h}{\nu}$, Lemma~\ref{lem:chernoff-hoeffding} shows that the
two corresponding bounds in~\eqref{eq:container-typical}, with
$\gamma=\frac{\nu}{24}$, fail with conditional probability at most
$2e^{-\nu^2m/1152}$.
The probability that any sampled vertex has fewer than
$(\eta-\frac{\nu}{24})m$ inneighbors or outneighbors in the other sample
is at most $4m e^{-\nu^2m/288}$, by summing the degree bounds conditional
on membership in the sample.
Applying the argument in the proof of Theorem~\ref{thm:lemma8} and
summing over fingerprints for both orders of $S,S'$ therefore bounds
the failure probability by
$$
 4(h+1)(2m)^h e^{-\nu^2m/1152}
 +4m e^{-\nu^2m/288}=o(1).
$$
\end{remark}

The following corollary is the regularity-free replacement for the
splitting lemma~\cite[Lemma~5.2]{DT}. We include the prescribed
exceptional set because, in the linkage application, it consists of the
endvertices of the paths still to be embedded.

\begin{corollary}
\label{cor:random-linear-partition}
Let $K_1$ be the absolute constant in
Theorem~\ref{thm:robust-inherit}. Let
$0<\nu\leq\tau\leq\frac{\eta}{8}<\frac{1}{8}$, set
$\rho=\frac{(\nu/2)^8}{K_1\log^2(2K_1/\nu)}$,
and let $G$ be an $n$-vertex robust $(\nu,\tau)$-outexpander satisfying
$\dmin(G)\geq\eta n$. Let $W_0\subseteq V(G)$ have order
$m_0\leq\frac{\nu n}{16}$, and suppose that
$m_0+m_1+\cdots+m_t=n$, where
$$
 m_i\geq\max\left\{
 \frac{16K_1}{\nu^4}\log(\frac{2K_1}{\nu}),
 \frac{256}{\nu^2}(4\log n+\log 4),
 \frac{8}{\eta^2}(5\log n+\log 4)
 \right\}
 ~\text{for all}~i\in [t].
$$
Choose an ordered partition
$V(G)\setminus W_0=W_1\cup\cdots\cup W_t$ uniformly subject to
$|W_i|=m_i$. With probability at least $1-\frac{1}{n^2}$, for all $i\in[t]$,
the digraph $G[W_i]$ is a robust $(\rho,4\tau)$-outexpander and, for all
$v\in V(G)$, we have
$|N_G^\pm(v)\cap W_i|\geq\frac{\eta m_i}{4}$.
\end{corollary}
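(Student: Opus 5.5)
The plan is to reduce to Theorem~\ref{thm:robust-inherit}, applied to a slightly smaller host digraph. First pass to $G'=G-W_0$. Since $|W_0|=m_0\leq\frac{\nu n}{16}\leq\frac{\nu n}{4}$, Lemma~\ref{lem:bounded-deletion} makes $G'$ a robust $(\frac{\nu}{2},2\tau)$-outexpander on $n'=n-m_0\geq\frac{n}{2}$ vertices with $\delta^0(G')\geq\frac{3\eta}{4}n'$. The parameters are compatible with Theorem~\ref{thm:robust-inherit} with $(\nu',\tau',\eta')=(\frac{\nu}{2},2\tau,\frac{3\eta}{4})$, because $\nu'\leq\tau'\leq\frac{\eta'}{2}<\frac12$ follows from $\tau\leq\frac{\eta}{8}$. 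The resulting $\rho'=\frac{(\nu/2)^8}{K_1\log^2(2K_1/\nu)}$ is exactly the $\rho$ in the statement, and $2\tau'=4\tau$.

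Next, each $W_i$ in the uniform ordered partition of $V(G')$ is a uniformly random $m_i$-subset of $V(G')$, by the sampling observation stated before Proposition~\ref{prop:random-typical}. The lower bound $m_i\geq\frac{16K_1}{\nu^4}\log\frac{2K_1}{\nu}=\frac{K_1}{\nu'^4}\log\frac{K_1}{\nu'}$ is exactly the size hypothesis of Theorem~\ref{thm:robust-inherit}. That theorem then says $G[W_i]=G'[W_i]$ fails to be a robust $(\rho,4\tau)$-outexpander with probability at most $2\exp(-\frac{\nu^2m_i}{256})$. The hypothesis $m_i\geq\frac{256}{\nu^2}(4\log n+\log 4)$ bounds this by $\frac{1}{2n^4}$.

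For the degree condition, Theorem~\ref{thm:robust-inherit} only controls neighbours of $v\in V(G')$, so I would argue directly. For each $v\in V(G)$ we have $|N_G^\pm(v)\cap V(G')|\geq\eta n-m_0\geq\frac{15\eta}{16}n\geq\frac{15\eta}{16}n'$, using $\nu\leq\eta$. Lemma~\ref{lem:chernoff-hoeffding}, with deviation $\frac{\eta m_i}{2}$ below a mean of at least $\frac{15\eta m_i}{16}$, then gives $|N^\pm_G(v)\cap W_i|\geq\frac{\eta m_i}{4}$ except with probability $\exp(-\frac{\eta^2m_i}{2})$. Summing over the $2n$ pairs $(v,\sigma)$ and using $m_i\geq\frac{8}{\eta^2}(5\log n+\log4)$ gives a bound of at most $2n\cdot\frac{1}{(4n^5)^4}$, which is tiny. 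Alternatively, the second estimate of Theorem~\ref{thm:robust-inherit} handles $v\in V(G')$ and the same Chernoff argument covers $v\in W_0$.

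Finally, take a union bound over $i\in[t]$. Since $t\leq n$, the total failure probability is at most $n(\frac{1}{2n^4}+\frac{1}{2n^4})\leq\frac{1}{n^2}$. The main obstacle is purely bookkeeping: checking that the rescaled parameters $(\frac{\nu}{2},2\tau,\frac{3\eta}{4})$ with $n'$ in place of $n$ match the given thresholds. The $\log n$ versus $\log n'$ discrepancy is harmless because $n'\leq n$.
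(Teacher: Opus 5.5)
Your proposal is correct and follows essentially the same route as the paper: delete $W_0$, invoke Lemma~\ref{lem:bounded-deletion}, apply Theorem~\ref{thm:robust-inherit} to $G-W_0$ with parameters $\frac{\nu}{2}$, $2\tau$ and a reduced semidegree constant, and finish with union bounds over $i\in[t]$. The differences are cosmetic. The paper uses $\frac{\eta}{2}$ in place of your $\frac{3\eta}{4}$. It also takes the degree bounds for vertices of $G-W_0$ from the theorem's second estimate and adds a direct Chernoff--Hoeffding bound only for the vertices of $W_0$, which is exactly the alternative you mention.
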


\begin{proof}
Let $F=G-W_0$ and $N=|F|$. Lemma~\ref{lem:bounded-deletion} shows that
$F$ is a robust $(\frac{\nu}{2},2\tau)$-outexpander with
$\delta^0(F)\geq\frac{\eta N}{2}$.

Each $W_i$ is a uniformly random $m_i$-subset of $V(F)$.
Theorem~\ref{thm:robust-inherit}, applied to $F$ with parameters
$\frac{\nu}{2},2\tau,$ and $\frac{\eta}{2}$, together with a union bound over $i\in[t]$,
shows that, with failure probability at most $\frac{1}{2n^2}$, every $G[W_i]$
has the required robust expansion and every vertex of $F$ has at least
$\frac{\eta m_i}{4}$ inneighbors and outneighbors in $W_i$.

For $z\in W_0$, we have
$|N_G^\pm(z)\cap V(F)|\geq\eta n-|W_0|\geq\frac{\eta N}{2}$.
The Chernoff--Hoeffding bound therefore gives
$\Pr(|N_G^\pm(z)\cap W_i|<\frac{\eta m_i}{4})\leq e^{-\eta^2m_i/8}$.
The stated lower bound on $m_i$ and a union bound over the at most
$2m_0t\leq2n^2$ choices of $z,i,$ and the sign show that these endpoint
degree conditions fail with probability at most $\frac{1}{2n^2}$.
\end{proof}

\subsection{From robust inheritance to universal linkage}

We now follow the proof of~\cite[Theorem~3.5]{DT}, replacing their
regularity-based splitting lemma by
Corollary~\ref{cor:random-linear-partition} and Taylor's theorem by
Theorem~\ref{thm:main}.

\begin{proof}[Proof of Theorem~\ref{thm:klinked}]
Let $K_1$ and $K_2$ be the absolute constants in
Theorems~\ref{thm:robust-inherit} and~\ref{thm:main}, and set
$a=\frac{\beta^2\nu}{64}$. Since $\ell_1+\cdots+\ell_k=n$ and
$\ell_i\geq\frac{1}{\beta}$, we also have $k\leq\beta n$.

First embed successively every $Q_i$ with $\ell_i\leq an$. When embedding
$Q_i$, delete the vertices in the copies already constructed and all
prescribed endpoints other than $u_i,v_i$. The number of deleted vertices
is at most $kan+2k\leq\frac{\nu n}{64}+2\beta n\leq\frac{\nu n}{16}$.
Let $F$ be the remaining digraph. By Lemma~\ref{lem:bounded-deletion},
$F$ is a robust $(\frac{\nu}{2},2\tau)$-outexpander with
$\delta^0(F)\geq\frac{\eta|F|}{2}$.
Since $\beta\leq\frac{\nu^3}{64}$ and $|F|\geq(1-\frac{\nu}{16})n$,
$$
 \frac{16}{\nu^3}
 \leq\ell_i-1
 \leq an
 \leq\frac{\nu^3}{32}|F|.
$$
Lemma~\ref{lem:fixed-endpoint-paths},
with parameters $\frac{\nu}{2},2\tau,$ and $\frac{\eta}{2}$, therefore gives
the desired copy from $u_i$ to $v_i$. Repeating this embeds all paths with
$\ell_i\leq an$, with pairwise disjoint copies avoiding the endpoints of
the remaining paths.

Delete these short paths, and call the remaining graph $G'$ and its order
$N$. Their union has order at most $kan\leq\frac{\nu n}{64}$, so
$N\geq(1-\frac{\nu}{64})n$. Lemma~\ref{lem:bounded-deletion} shows that
$G'$ is a robust
$(\frac{\nu}{2},2\tau)$-outexpander with
$\delta^0(G')\geq\frac{\eta N}{2}$. Relabel the paths still to be embedded
as $Q_1,\ldots,Q_t$, and set $W_0=\{u_1,\ldots,u_t,v_1,\ldots,v_t\}$ and $m_i=\ell_i-2$.
Since $t\leq k\leq\beta n$, we have
$|W_0|=2t\leq2\beta n\leq\frac{\nu N}{32}$.
Each remaining path satisfies
$$
 \ell_i\geq\max\left\{an,\frac{1}{\beta}\right\}
 \geq\left(\frac{\nu n}{64}\right)^{1/3}.
$$
Consequently, $m_i=\ell_i-2\geq n^{1/4}$ for sufficiently large $K_3$.
Set $\rho=\frac{(\nu/4)^8}{K_1\log^2(4K_1/\nu)}$.
There exists an absolute constant $D$ such that
$\frac{1}{\rho^4}\leq\frac{D}{\nu^{40}}$. Since
$\log m_i\geq\frac{\log n}{4}\geq\frac{K_3}{4\nu^{40}}$, taking $K_3$
sufficiently large ensures that every $m_i$ meets
the size hypotheses of Corollary~\ref{cor:random-linear-partition} with
parameters $\frac{\nu}{2},2\tau,$ and $\frac{\eta}{2}$, and also that
$\rho m_i\geq8$ and $m_i\geq\exp(\frac{16K_2}{\rho^4})$.

Apply Corollary~\ref{cor:random-linear-partition} to $G'$ with parameters
$\frac{\nu}{2},2\tau,$ and $\frac{\eta}{2}$, prescribed exceptional set
$W_0$, and part sizes $m_1,\ldots,m_t$. We obtain a partition
$V(G')=W_0\cup W_1\cup\cdots\cup W_t$ such that, for all $i\in[t]$,
$G[W_i]$ is a robust
$(\rho,8\tau)$-outexpander and
$|N_G^\pm(v)\cap W_i|\geq\frac{\eta m_i}{8}$ for all $v\in V(G')$.

For all $i\in[t]$, set $U_i=W_i\cup\{u_i,v_i\}$. The degree bound
applies to both endpoints as well as to every vertex of $W_i$, so
$\delta^0(G[U_i])\geq\frac{\eta m_i}{8}\geq\frac{\eta}{16}|U_i|$.
To check robust expansion, let $S\subseteq U_i$ satisfy
$16\tau|U_i|\leq|S|\leq(1-16\tau)|U_i|$ and
set $S_0=S\cap W_i$. Since $\rho m_i\geq8$ and $\rho\leq\tau$, we have
$8\tau m_i\leq|S_0|\leq(1-8\tau)m_i$.
Every vertex of $\RN_{\rho,G[W_i]}(S_0)$ has at least
$\rho m_i\geq\frac{\rho}{2}|U_i|$ inneighbors in $S$, and hence
\begin{align*}
 |RN^+_{\rho/2,G[U_i]}(S)|
 \geq |S_0|+\rho m_i
 \geq |S|-2+\rho m_i
 \geq |S|+\frac{\rho}{2}|U_i|.
\end{align*}
Thus $G[U_i]$ is a robust $(\frac{\rho}{2},16\tau)$-outexpander.

Since $\frac{\rho}{2}\leq16\tau\leq\frac{\eta}{128}$ and
$|U_i|\geq\exp(\frac{16K_2}{\rho^4})$, Theorem~\ref{thm:main}, with
parameters $\frac{\rho}{2},16\tau,$ and $\frac{\eta}{16}$, gives a spanning
copy $P_i$ of $Q_i$ from $u_i$ to $v_i$ in each $G[U_i]$.
Together with the short paths embedded earlier, these paths are pairwise
disjoint and cover $V(G)$.
\end{proof}

\section{Concluding remarks}
\label{sec:concluding}

Cai, Wang, Wang, and Zhang~\cite{CWWZ} recently determined the exact
Ore-type degree threshold for an anti-directed Hamilton cycle in an oriented
graph. Their proof uses Taylor's theorem in the robust-expander case and
its fixed-endpoint Hamilton-path version in the final step of the
non-expander case. Replacing both inputs with
Theorem~\ref{thm:main} gives a
regularity-free proof of their result.

Wang, Wang, and Zhang~\cite{WWZ} recently determined the exact minimum
semidegree threshold for containing every orientation of a Hamilton cycle,
together with a corresponding pancyclicity result. Theorem~\ref{thm:main}
removes the use of regularity from the robust-expander part of their proof.
Their treatment of the extremal cases still uses the diregularity lemma and
the blow-up lemma. It remains an interesting problem to find a completely
regularity-free proof of their results.

Lang~\cite[Section~9]{Lang} asks whether his inheritance lemma for
uniformly dense hypergraphs can be proved without the weak hypergraph
regularity lemma. It seems likely that the general hypergraph
partition-testing theorem of Fischer, Matsliah, and Shapira~\cite{FMS}
provides an affirmative answer, although we do not work out this
deduction here. It would also be interesting to obtain a direct proof
using containers.

A related inheritance problem arises in sparse undirected graphs.
For an undirected graph $G$ on $n$ vertices, let $e_G(X,Y)$ count
ordered adjacent pairs in $X\times Y$. Following~\cite{MST}, $G$ is
\emph{$(p,\beta)$-sparse} if
$e_G(X,Y)\leq p|X||Y|+\beta\sqrt{|X||Y|}$ for all
$X,Y\subseteq V(G)$, and is a \emph{$\gamma p$-expander} if
$e_G(X,Y)\geq\gamma p|X||Y|$ for all partitions $V(G)=X\cup Y$.
Mousset,
\v{S}kori\'c, and Truji\'c~\cite[Lemma~3.5]{MST} proved that if $G$ is a
$(p,\beta)$-sparse $\gamma p$-expander and $R$ is a uniformly random
$r$-set satisfying $rp\geq C\max\{\log n,\beta\}$, then with probability
at least $1-\frac{1}{n}$, the graph $G[R]$ is a $\gamma_1p$-expander. Their
proof uses the sparse regularity lemma and gives
$\gamma_1=\Theta(\gamma^4)$; they ask whether one can take
$\gamma_1=\Omega(\gamma)$. It would be interesting to determine whether
the sparse-pair container method developed here gives a regularity-free
proof of their inheritance lemma, possibly with improved dependence on
$\gamma$. This would not by itself give a regularity-free proof of their
cycle-cover theorem, since their Embedding Lemma uses sparse regularity
separately.

\section*{Acknowledgements}

Thank you to Bob Krueger for a discussion about the container arguments used in this paper, and thank you to Andrew Treglown for comments on an earlier draft of this paper.

This paper completes a long-standing goal of the author, going back to the time of \cite{DM, DKMOT}. The renewed interest in the problem was inspired by three unrelated interactions, all of which happened within a two-week period in the summer of 2026. First, I was on the master's thesis committee of William Joseph who gave a talk about his work with Tao Jiang involving the container method in June 2026. Second, I had a conversation with Alp M\"uyesser and Arjun Ranganathan at the British Combinatorial Conference in July 2026 about the H\"aggkvist-Thomason paper. Third, a referee of \cite{DT} mentioned their surprise that regularity was needed to prove \cite[Lemma 5.2]{DT} and suggested that we add a comment about why this was the case. I had long wondered if containers could be used as the ingredient H\"aggkvist and Thomason needed to complete their proof as they envisioned it. Reading their proof again primed by the three interactions described above, I was struck by how the H\"aggkvist-Thomason lemma seemed like a proto-container argument.

\section*{AI declaration}

The proofs of Theorems~\ref{thm:hall-inherit}
and~\ref{thm:robust-inherit} were conceived of and written by the author.
ChatGPT-5.6 Sol was used throughout Section \ref{sec:hamilton} and Section \ref{sec:linkage} to check the parameters as we made the appropriate modifications to the arguments of H\"aggkvist--Thomason and DeBiasio--Treglown. ChatGPT was also used to turn my hand-drawn figures into TikZ figures.

Throughout the writing of this paper I was vaguely aware of some connection between my result and the literature on property testing and the weak regularity lemma of Frieze and Kannan.  However, ChatGPT-5.6 couldn't help me prove that any of the existing methods directly implied Theorem~\ref{thm:hall-inherit} (which is the main result needed in the H\"aggkvist--Thomason argument).  However, during the final stages of writing, I asked ChatGPT-6 Astra to carry out a deeper literature search to make sure we weren't missing anything. During this process it identified a number of related results, which led to an expansion in the scope of the paper.  In particular, the original proof of Theorem~\ref{thm:hall-inherit} only applied to sets of at least logarithmic size, and the original proof of Theorem~\ref{thm:robust-inherit} only applied to linear sized sets.  ChatGPT-6 Astra found the paper of Blais and Seth which used a weighted union bound calculation.  We were able to easily adapt this to our setting, thereby extending both results to hold for sets of arbitrary growing size. Additionally, it found the partition-testing and cut-norm sampling results from which the alternate proofs (with slightly different quantitative conclusions) of Theorems~\ref{thm:hall-inherit} and~\ref{thm:robust-inherit} in the appendix were obtained.  The appendix was almost entirely written by ChatGPT-6 Astra; I only made small edits.  While I was initially disappointed to learn that my proof had lost some of its novelty, I am hopeful that highlighting these different approaches will be of value to the relevant research communities.

\newpage

\appendix
\section{Alternate proofs of expansion inheritance}
\label{app:alternate-inheritance}

We give two alternate proofs of the inheritance conclusions of
Theorems~\ref{thm:hall-inherit} and~\ref{thm:robust-inherit}, with different
quantitative bounds. The first uses the Frieze--Kannan weak regularity
lemma together with a cut-norm sampling theorem of Alon, Fernandez de la
Vega, Kannan, and Karpinski. The second uses the partition-testing theorem
of Fischer, Matsliah, and Shapira, whose proof does not use regularity.
Both arguments give the following result, whose parts correspond to
Theorems~\ref{thm:hall-inherit} and~\ref{thm:robust-inherit}, respectively.

\begin{theorem}\label{thm:alternate-inheritance}
For all $0<\nu<\frac{1}{2}$, there exist $m_0=m_0(\nu)$ and $c=c(\nu)>0$
such that the following holds. Let
$\nu\leq\tau\leq\frac{\eta}{2}<\frac{1}{2}$, let $G$ be an $n$-vertex
robust $(\nu,\tau)$-outexpander with $\dmin(G)\geq\eta n$, and let
$m\geq m_0$.
\begin{enumerate}[label=\textup{(\roman*)}]
\item If $\tau\leq\frac{\eta}{8}$, $2m\leq n$, and $(S,S')$ is chosen
uniformly from the ordered pairs of disjoint $m$-subsets of $V(G)$,
then $(S,S')$ is $\frac{\nu}{4}$-expanding with probability at least
$1-e^{-cm}$.
\item If $m\leq n$ and $W$ is a uniformly random $m$-subset of $V(G)$,
then $G[W]$ is a robust $(\frac{\nu^2}{64},2\tau)$-outexpander with
probability at least $1-e^{-cm}$.
\end{enumerate}
In~\textup{(ii)}, the degree estimate in
Theorem~\ref{thm:robust-inherit} also holds.
\end{theorem}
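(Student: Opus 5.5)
We follow the first of the two routes announced above: Frieze--Kannan weak regularity combined with the cut-norm sampling theorem of Alon, Fernandez de la Vega, Kannan, and Karpinski~\cite{AFKK}. The point is that both parts reduce to one deterministic statement about cut densities, which Lemma~\ref{lem:density} already provides for $G$.

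\emph{Transfer step.} Fix $\varepsilon=\varepsilon(\nu)$ small, say $\varepsilon=\nu^2/1000$. Apply the cut-norm sampling theorem to the adjacency matrix of $G$, viewed as an $n\times n$ $0/1$ matrix. It gives $m_0(\nu)$ and $c(\nu)>0$ such that, for a uniformly random $m$-set $W$ with $m\ge m_0$, the following holds with probability at least $1-e^{-cm}$. For all $A,B\subseteq W$,
\[
\frac{e_G(A,B)}{m^2}\ \ge\ \min_{P,Q}\frac{e_G(P,Q)}{n^2}-\varepsilon,
\]
where the minimum is over sets $P,Q$ with $|P|\ge \frac{|A|n}{m}-\varepsilon n$ and $|Q|\ge\frac{|B|n}{m}-\varepsilon n$. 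This is the standard statement that the sample's cut norm (or its ``minimum cut density profile'') approximates that of the whole graph up to $\varepsilon$. Weak regularity is what turns this into a statement over all pairs $(A,B)$ simultaneously. It partitions $V(G)$ into $2^{O(\varepsilon^{-2})}$ parts, so that every cut density is determined, up to $\varepsilon$, by the fractions of $P$ and $Q$ in each part. The fractions of the parts inside $W$ concentrate with failure probability $e^{-\Omega(\varepsilon^2 m)}$, and a union bound over the bounded number of parts finishes. The exponential rate $e^{-cm}$ requires the sharper AFKK tail bound rather than the basic expectation version. I expect this to be the main obstacle: extracting an explicit exponential tail in $m$, uniform in $n$, from the cut-norm sampling theorem as stated.

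\emph{Part (ii).} Suppose $A\subseteq W$ witnesses non-expansion, with $2\tau m\le|A|\le(1-2\tau)m$, and put $B=W\setminus\RN_{\rho,G[W]}(A)$ with $\rho=\nu^2/64$. As in the proof of Theorem~\ref{thm:robust-inherit}, we get $e(A,B)<\rho m^2$, $|B|>(2\tau-\rho)m$, and $|A|+|B|>(1-\rho)m$. The transfer step produces $P,Q$ with $|P|,|Q|\ge\tau n$ and $|P|+|Q|\ge(1-\rho-2\varepsilon)n\ge(1-\frac{\nu}{2})n$. Lemma~\ref{lem:density} then gives $e_G(P,Q)\ge\frac{\nu^2n^2}{4}$, so $e(A,B)\ge(\frac{\nu^2}{4}-\varepsilon)m^2>\rho m^2$, a contradiction. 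The degree estimate follows from Lemma~\ref{lem:chernoff-hoeffding} exactly as before.

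\emph{Part (i).} Apply the transfer step to the pair $S\cup S'$, noting that $S$ and $S'$ are each uniform $m$-sets. Then rerun the proof of Theorem~\ref{thm:lemma8}. Small sets $A$ are handled by degree typicality, which follows from Chernoff and a union bound over the sampled vertices, each with failure $e^{-\Omega(\eta^2 m)}$. For a large failing $A$, the empty pair $(A,B)$ has $|A|+|B|>(1-\frac{\nu}{8})m$. The transfer step gives $P,Q$ with $|P|+|Q|\ge(1-\frac{\nu}{2})n$ and both of size at least $\tau n$, so Lemma~\ref{lem:density} applies. This contradicts $e(A,B)=0<(\frac{\nu^2}{4}-\varepsilon)m^2$.

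For the Fischer--Matsliah--Shapira alternative, encode ``there exist $A,B$ with prescribed densities and sparse $e(A,B)$'' as a partition property of $G$. The partition-testing theorem then gives that $W$ has the property only if $G$ is $\varepsilon$-close to having it. By Lemma~\ref{lem:density} the latter is false, which yields the same conclusion.
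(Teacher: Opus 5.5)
Your reduction matches the paper's: both parts follow from the inheritance by random samples of the edge bound in Lemma~\ref{lem:density} (the paper's Proposition~\ref{prop:alternate-sparse-pairs}). Your deduction of (ii), and your handling of small $A$ in (i), are essentially the paper's. The gap is in the transfer step, which carries all the content.

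Applying the cut-norm sampling theorem to the adjacency matrix $M$ gives nothing useful. Since $M$ is nonnegative, $\|M\|_\square=e(G)/n^2$ and $\|M[W]\|_\square=e(G[W])/m^2$, so you only learn that $W$ is not much denser than $G$. Concentration of the part fractions $|W\cap V_i|/m$ does not rescue this. The Frieze--Kannan guarantee $|e_G(X,Y)-e_{\mathcal P}(X,Y)|\le\delta n^2$ says nothing about sets $A,B\subseteq W$ once $m\ll n$, because the error $\delta n^2$ dwarfs $m^2$. Knowing how many sampled vertices lie in each part does not determine how edges between arbitrary subsets of $W$ are distributed.

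The missing idea is to apply the sampling theorem to the \emph{error matrix} $R=M-M_{\mathcal P}$, which satisfies $\|R\|_\square\le\delta$. This gives $\|R[Z]\|_\square\le2\delta$ for the sample $Z$, hence $|e_G(A,B)-e_{\mathcal P}(A,B)|\le8\delta m^2$ simultaneously for all $A,B$ inside the sample. Only then can you enlarge $A,B$ cell by cell to $P,Q$ with $|P\cap V_i|\approx|A\cap V_i|\,|V_i|/|W\cap V_i|$. Using $d_{ij}\ge0$ you then get $e_{\mathcal P}(P,Q)\le e_{\mathcal P}(A,B)/((1-\frac{\nu}{8})^2p^2)$, which contradicts Lemma~\ref{lem:density}.

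The exponential tail you leave open as ``the main obstacle'' has a short resolution. Replacing one vertex of $Z$ changes $\|R[Z]\|_\square$ by at most $\frac{4}{|Z|}$, so Azuma's inequality concentrates it around its mean. The constant-probability bound of Theorem~\ref{thm:alternate-cut-sampling} then forces that mean to be at most $\|R\|_\square+\frac{\varepsilon}{2}$.

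Part (i) also contains a normalization error. Your transfer step rescales by $|W|$. Applying it to $W=S\cup S'$, which has order $2m$, with $|A|+|B|>(1-\frac{\nu}{8})m$ gives only $|P|+|Q|\ge(\frac12-\frac{\nu}{16}-2\varepsilon)n$. This is far short of the $(1-\frac{\nu}{2})n$ needed to invoke Lemma~\ref{lem:density}. Applying it to $S$ and $S'$ separately does not help, since the step only concerns pairs with both $A,B$ in the same sampled set. You need a two-sample version in which $A$ is rescaled relative to $S$ and $B$ relative to $S'$, cell by cell. This is how the paper sets up the disjoint model of Proposition~\ref{prop:alternate-sparse-pairs}.

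Finally, your Fischer--Matsliah--Shapira paragraph is not a proof. A tester only yields constant-probability statements and is told the order of its input. Turning it into the claim you state needs four further ingredients:
\begin{enumerate}
\item balanced blow-ups $G^{(m)}$ and $(G[U])^{(n)}$ of equal order;
\item a coupling of the tester's runs on these two inputs;
\item a weighted form of Lemma~\ref{lem:density} for the blow-up;
\item Azuma's inequality applied to a Lipschitz distance-to-failure functional, to reach $e^{-cm}$.
\end{enumerate}
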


Part~(i) gives the same $\frac{\nu}{4}$-expansion as
Theorem~\ref{thm:hall-inherit}, with failure probability $e^{-cm}$ in
place of $8n^{48/\nu^2}e^{-\nu^2m/288}$, provided $m\geq m_0(\nu)$.
For fixed parameters, this gives probability $1-o(1)$ whenever
$m\to\infty$. The container argument in
Remark~\ref{rem:hall-growing-samples} gives the same conclusion, with an
explicit failure bound independent of $n$.
Part~(ii) replaces the robust expansion parameter
$\rho=\frac{\nu^8}{K_1\log^2(K_1/\nu)}$ in
Theorem~\ref{thm:robust-inherit} by $\frac{\nu^2}{64}$.
That theorem already allows $m$ to grow arbitrarily slowly.
The difference is that its sample threshold and exponential rate are
explicit, whereas we leave $m_0(\nu)$ and $c(\nu)$ unquantified here.
The simultaneous degree estimate for all vertices of $G$ still has
failure probability at most $2n e^{-\eta^2m/2}$.

The following proposition shows that the lower bound on edges between
large sets in Lemma~\ref{lem:density} is inherited by random samples,
after increasing the relative size thresholds and decreasing the
normalized edge bound. Failure of robust expansion gives
a sparse pair violating this bound. For Hall-type expansion, a large
witnessing set gives an empty pair violating the bound, while small
sets are handled by degree concentration. 

\begin{proposition}\label{prop:alternate-sparse-pairs}
For all $0<\nu<\frac{1}{2}$, there exist $m_0=m_0(\nu)$ and $c=c(\nu)>0$
such that the following holds. Let $\nu\leq\tau<\frac{1}{2}$, and suppose
that an $n$-vertex digraph $G$ satisfies the following condition for all
$P,Q\subseteq V(G)$:
\begin{equation}\label{eq:alternate-host-density}
 e_G(P,Q)\geq\frac{\nu^2n^2}{4}
 ~\text{whenever}~
 |P|,|Q|\geq\tau n,~
 |P|+|Q|\geq\left(1-\frac{\nu}{2}\right)n.
\end{equation}
Let $m\geq m_0$. Either let $U=V$ be a uniformly random $m$-subset of
$V(G)$, or choose $(U,V)$ uniformly from the ordered pairs of disjoint
$m$-subsets, with $m\leq n$ or $2m\leq n$, respectively. With probability
at least $1-e^{-cm}$, all $A\subseteq U$ and $B\subseteq V$ satisfying
\begin{equation}\label{eq:alternate-sample-sizes}
 |A|,|B|\geq\frac{3\tau m}{2},~
 |A|+|B|\geq\left(1-\frac{\nu}{4}\right)m
\end{equation}
also satisfy $e_G(A,B)>\frac{\nu^2m^2}{64}$.
\end{proposition}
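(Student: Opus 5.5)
The hypothesis \eqref{eq:alternate-host-density} is a statement about \emph{all} pairs of large sets in $G$, and the conclusion asks for the same statement inside the sample, with the slack $\frac{\nu^2}{4}\to\frac{\nu^2}{64}$, failure probability $e^{-cm}$ and $m_0$ independent of $n$. I do not expect the sparse container argument of Section~\ref{sec:inherit_robust} to give this. With $M=\frac{\nu^2m^2}{64}$, the two contributions $\theta N$ and $\frac{M}{\theta d}$ to $|X|$ multiply to order $m^2$ for every $\theta$, so $|X|$ cannot be made small compared with $\nu m$. That argument only works with a tiny $\rho$. I would therefore follow the first route announced in the appendix: Frieze--Kannan weak regularity combined with the cut-norm sampling theorem of Alon, Fernandez de la Vega, Kannan, and Karpinski~\cite{AFKK}.

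\textbf{Step 1 (weak regularity in $G$).} Fix a small $\varepsilon=\varepsilon(\nu)$, say $\varepsilon\leq\nu^3/10^4$. Apply the Frieze--Kannan lemma to the (non-symmetric) adjacency matrix of $G$. This gives a partition $V(G)=V_1\cup\cdots\cup V_k$ with $k\leq 2^{O(\varepsilon^{-2})}$ and a step matrix $D$, constant $d_{ij}$ on $V_i\times V_j$, such that the adjacency matrix decomposes as $D+E$ with $\|E\|_\square\leq\varepsilon n^2$. We may also assume every $|V_i|$ is either at least $\varepsilon n/k$ or absorbed into an error of order $\varepsilon n$.

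\textbf{Step 2 (sampling).} Two events should hold with probability at least $1-e^{-cm}$ once $m\geq m_0(\nu)$.
\begin{enumerate}
\item \emph{Part sizes.} Lemma~\ref{lem:chernoff-hoeffding} and a union bound over the $k$ parts give $\bigl||U\cap V_i|-\frac{m}{n}|V_i|\bigr|\leq\frac{\varepsilon m}{k}$ for every $i$, and the same for $V$.
\item \emph{Cut norm.} The sampling theorem of~\cite{AFKK} gives $\|E[U,V]\|_\square\leq 2\varepsilon m^2$.
\end{enumerate}
In the disjoint-pair case I would first sample $U\cup V$ and then split it randomly, or check that the theorem handles two disjoint samples directly. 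This is the step I expect to be the main obstacle. One must verify that~\cite{AFKK} gives an exponential tail $e^{-cm}$ rather than merely constant probability. If the tail is too weak, I would boost it: either use McDiarmid's inequality, since $\|E[U,V]\|_\square/m^2$ changes by $O(1/m)$ when one sample vertex is swapped, or partition the sample into blocks and take a majority vote.

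\textbf{Step 3 (lifting back to $G$).} Fix $A\subseteq U$ and $B\subseteq V$ satisfying \eqref{eq:alternate-sample-sizes}, and set $x_i=\frac{|A\cap V_i|}{|U\cap V_i|}$ and $y_j=\frac{|B\cap V_j|}{|V\cap V_j|}$. For each $i$, choose $P_i\subseteq V_i$ with $|P_i|=x_i|V_i|$, and $Q_j\subseteq V_j$ with $|Q_j|=y_j|V_j|$; put $P=\bigcup_i P_i$ and $Q=\bigcup_j Q_j$.

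By Step 2(i), $|P|=\frac{n}{m}|A|\pm\varepsilon n$, and likewise for $Q$. Hence $|P|,|Q|\geq(\frac{3\tau}{2}-\varepsilon)n\geq\tau n$ and $|P|+|Q|\geq(1-\frac{\nu}{4}-2\varepsilon)n\geq(1-\frac{\nu}{2})n$, so \eqref{eq:alternate-host-density} gives $e_G(P,Q)\geq\frac{\nu^2n^2}{4}$. Since $\|E\|_\square\leq\varepsilon n^2$,
$$
 \sum_{i,j}d_{ij}|P_i||Q_j|\geq\left(\frac{\nu^2}{4}-\varepsilon\right)n^2.
$$
Using Step 2(i) again, $|P_i|=\frac{n}{m}|A\cap V_i|\pm\frac{\varepsilon n}{k}$, so the same weighted sum computed in the sample satisfies
$$
 \sum_{i,j}d_{ij}|A\cap V_i||B\cap V_j|\geq\left(\frac{\nu^2}{4}-O(\varepsilon)\right)m^2.
$$
Finally, Step 2(ii) gives $e_G(A,B)\geq\sum_{i,j}d_{ij}|A\cap V_i||B\cap V_j|-2\varepsilon m^2>\frac{\nu^2m^2}{64}$ once $\varepsilon$ is small enough in terms of $\nu$.

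Because Steps 1 and 2 do not depend on $A$ and $B$, a single good sample event covers every pair $(A,B)$ simultaneously, so no union bound over pairs is needed.
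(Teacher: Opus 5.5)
Your proposal is correct and is essentially the paper's first proof of this proposition. It uses the same ingredients: Frieze--Kannan weak regularity, the cut-norm sampling theorem of~\cite{AFKK} applied to $Z=U\cup V$ and boosted to an exponential tail by martingale concentration (as in Lemma~\ref{lem:alternate-cut-sampling}), concentration of cell counts, and a cell-by-cell proportional lift of $(A,B)$ to $(P,Q)$.

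The differences are cosmetic. You argue directly with additive errors, whereas the paper argues by contradiction using multiplicative cell-count control for an equitable partition. Your bound $|P_i|=\frac{n}{m}|A\cap V_i|\pm\frac{\varepsilon n}{k}$ shows that the additive version needs no lower bound on part sizes, provided the $d_{ij}\in[0,1]$ are block averages. Also, in the disjoint model the sampled error is $8\varepsilon m^2$ rather than $2\varepsilon m^2$, since $|Z|=2m$.
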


We first deduce
Theorem~\ref{thm:alternate-inheritance} from the proposition and then
give two proofs of the proposition.

\begin{proof}[Proof of Theorem~\ref{thm:alternate-inheritance}]
Lemma~\ref{lem:density} gives~\eqref{eq:alternate-host-density}.
For~(i), apply Proposition~\ref{prop:alternate-sparse-pairs} to both
$(S,S')$ and $(S',S)$. Its conclusion holds in both directions except
with probability at most $2e^{-cm}$. Conditional on $v\in S$, the set
$S'$ is a uniform $m$-subset of $V(G)\setminus\{v\}$, and
$\mathbb E(|N^\sigma(v)\cap S'|\mid v\in S)\geq\eta m$.
Lemma~\ref{lem:chernoff-hoeffding} and a union bound show that, except
with probability at most $4m e^{-\eta^2m/2}$, all vertices of $S$ have
at least $\frac{\eta m}{2}$ inneighbors and outneighbors in $S'$, and
conversely. Here the factor $4m$ follows by summing the conditional
failure probabilities over the vertices in the two sampled sets.

Assume that neither exceptional event occurs, and let $A\subseteq S$
satisfy $0<|A|\leq\frac{m}{2}$. If $|A|\leq\frac{\eta m}{4}$, the
degree bounds give $|N^\pm(A)\cap S'|\geq\frac{\eta m}{2}\geq2|A|$.
Otherwise, suppose that out-expansion fails and set
$B=S'\setminus N^+(A)$. We have $e_G(A,B)=0$ and
$$
 |A|>\frac{\eta m}{4}\geq2\tau m,~
 |B|>\left(\frac{1}{2}-\frac{\nu}{8}\right)m,~
 |A|+|B|>\left(1-\frac{\nu}{8}\right)m.
$$
Since $\tau<\frac{1}{8}$, these inequalities
imply~\eqref{eq:alternate-sample-sizes}, a contradiction. For
in-expansion use $B=S'\setminus N^-(A)$ and the conclusion for $(S',S)$,
applied to $(B,A)$. Interchanging $S,S'$ gives the remaining two
conditions. Since $\eta\geq8\nu$, increasing $m_0(\nu)$ and decreasing
$c(\nu)$ absorbs $2e^{-cm}+4m e^{-\eta^2m/2}$ into the claimed bound.

For~(ii), suppose that $G[W]$ is not a robust
$(\frac{\nu^2}{64},2\tau)$-outexpander. Choose a witnessing set $A$ with
$2\tau m\leq|A|\leq(1-2\tau)m$, and set
$B=W\setminus\RN_{\nu^2/64,G[W]}(A)$. We have
$$
 |B|>m-|A|-\frac{\nu^2m}{64},~
 |A|+|B|>\left(1-\frac{\nu^2}{64}\right)m,~
 e_G(A,B)<\frac{\nu^2m^2}{64}.
$$
Since $\nu\leq\tau<\frac{1}{2}$, these sets
satisfy~\eqref{eq:alternate-sample-sizes}. Part~(ii) follows from
Proposition~\ref{prop:alternate-sparse-pairs}. For the degree estimate,
Lemma~\ref{lem:chernoff-hoeffding} gives
$\Pr(|N^\sigma(v)\cap W|<\frac{\eta m}{2})\leq e^{-\eta^2m/2}$ for
all $v\in V(G)$ and $\sigma\in\{+,-\}$; take a union bound over
the $2n$ choices.
\end{proof}

Lemma~\ref{lem:chernoff-hoeffding} gives Hoeffding's inequality for
sampling without replacement. The partition-testing proof also uses
the following version for sums of independent random variables.

\begin{lemma}[{Hoeffding~\cite[Theorem~1]{Hoeffding}}]
\label{lem:alternate-hoeffding}
Let $X_1,\ldots,X_r$ be independent random variables taking values in
$[0,1]$, and set $X=\sum_{i=1}^r X_i$. For all $t>0$,
$$
 \Pr(X-\mathbb E X\geq t)\leq e^{-2t^2/r}
 ~\text{and}~
 \Pr(X-\mathbb E X\leq-t)\leq e^{-2t^2/r}.
$$
\end{lemma}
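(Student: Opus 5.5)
The plan is the standard exponential-moment (Chernoff) argument combined with Hoeffding's lemma on the moment generating function of a bounded random variable. By symmetry it suffices to prove the upper-tail bound. Indeed, the variables $1-X_i$ are again independent and take values in $[0,1]$, and
$$
 \sum_{i=1}^r(1-X_i)-\mathbb E\sum_{i=1}^r(1-X_i)=-(X-\mathbb E X),
$$
so the lower-tail bound for $X$ is the upper-tail bound for $\sum_i(1-X_i)$.

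For the upper tail, fix $\lambda>0$ and apply Markov's inequality to $e^{\lambda(X-\mathbb E X)}$:
$$
 \Pr(X-\mathbb E X\geq t)\leq e^{-\lambda t}\,\mathbb E e^{\lambda(X-\mathbb E X)}
 =e^{-\lambda t}\prod_{i=1}^r\mathbb E e^{\lambda(X_i-\mathbb E X_i)}.
$$
The factorization uses independence of $X_1,\ldots,X_r$. The key step is Hoeffding's lemma: if $Y$ takes values in $[a,b]$ and $\mathbb E Y=0$, then $\mathbb E e^{\lambda Y}\leq e^{\lambda^2(b-a)^2/8}$. Applied with $Y=X_i-\mathbb E X_i$, which lies in an interval of length $1$, each factor is at most $e^{\lambda^2/8}$. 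This gives
$$
 \Pr(X-\mathbb E X\geq t)\leq\exp\left(-\lambda t+\frac{r\lambda^2}{8}\right).
$$
Choosing $\lambda=\frac{4t}{r}$ yields $e^{-2t^2/r}$.

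The only nontrivial step is Hoeffding's lemma, and I would prove it as follows.

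\emph{Convexity step.} By convexity of $y\mapsto e^{\lambda y}$ on $[a,b]$,
$$
 e^{\lambda Y}\leq\frac{b-Y}{b-a}e^{\lambda a}+\frac{Y-a}{b-a}e^{\lambda b}.
$$
Taking expectations and using $\mathbb E Y=0$ gives
$$
 \mathbb E e^{\lambda Y}\leq\frac{b}{b-a}e^{\lambda a}-\frac{a}{b-a}e^{\lambda b}=e^{\varphi(u)},
$$
where $u=\lambda(b-a)$, $p=-\frac{a}{b-a}\in[0,1]$, and $\varphi(u)=-pu+\log(1-p+pe^u)$.

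\emph{Taylor step.} One checks that $\varphi(0)=\varphi'(0)=0$. Moreover, $\varphi''(u)=\pi(1-\pi)\leq\frac14$, where $\pi=\frac{pe^u}{1-p+pe^u}\in[0,1]$. Taylor's theorem with remainder then gives $\varphi(u)\leq\frac{u^2}{8}$, which is the claim.

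The main obstacle is exactly this second-derivative bound. Recognizing $\varphi''$ as the variance $\pi(1-\pi)$ of a Bernoulli variable makes it immediate.
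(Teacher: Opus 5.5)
Your proof is correct and complete. The reduction of the lower tail to the upper tail via $1-X_i$ is sound. So are the Chernoff step with the factorization by independence, the convexity bound, the identity $\varphi''(u)=\pi(1-\pi)\leq\frac14$, and the choice $\lambda=\frac{4t}{r}$. In your application the interval is $[a,b]=[-\mathbb E X_i,1-\mathbb E X_i]$, which has length exactly $1$, so dividing by $b-a$ raises no degenerate case.

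The paper gives no proof of this lemma; it simply quotes Hoeffding's Theorem~1, so the only comparison available is with the source. Hoeffding states that theorem in a sharper relative-entropy form. He first bounds $\mathbb E e^{hX_i}\leq 1-\mu_i+\mu_ie^{h}$ by the same convexity step, then merges the factors with the arithmetic--geometric mean inequality, and then optimizes $h$ exactly. The bound $e^{-2t^2/r}$ is the weakest of the consequences he lists.

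Your route goes through the per-variable estimate $\mathbb E e^{\lambda Y}\leq e^{\lambda^2(b-a)^2/8}$, now usually called Hoeffding's lemma, which appears in the same paper. It yields only the sub-Gaussian bound, but that is all the paper uses. It also extends verbatim to independent variables in arbitrary intervals $[a_i,b_i]$, giving $e^{-2t^2/\sum_i(b_i-a_i)^2}$.
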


Both proofs use Azuma's inequality in the following form.

\begin{lemma}[Azuma~\cite{Azuma}]
\label{lem:alternate-azuma}
Let $M_0,\ldots,M_r$ be a martingale with $M_0$ constant; thus
$\mathbb E(M_i\mid M_0,\ldots,M_{i-1})=M_{i-1}$ for all $i\in[r]$.
If
$|M_i-M_{i-1}|\leq a_i$ almost surely for all $i\in[r]$, where the
$a_i>0$ are constants, then, for all $t>0$,
$$
 \Pr(M_r-M_0\geq t)\leq
 \exp\left(-\frac{t^2}{2\sum_{i=1}^r a_i^2}\right)
 ~\text{and}~
 \Pr(M_r-M_0\leq-t)\leq
 \exp\left(-\frac{t^2}{2\sum_{i=1}^r a_i^2}\right).
$$
\end{lemma}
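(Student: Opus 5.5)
Azuma's inequality is the standard bounded-differences martingale bound, cited from~\cite{Azuma}. I would prove it in the standard exponential-moment form, and I would check the lower-tail bound by applying the upper-tail bound to the martingale $-M_0,\ldots,-M_r$, which has the same increment bounds $a_i$.

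Write $D_i=M_i-M_{i-1}$ and let $\mathcal F_{i-1}$ be the $\sigma$-algebra generated by $M_0,\ldots,M_{i-1}$. By the martingale hypothesis we have $\mathbb E(D_i\mid\mathcal F_{i-1})=0$ and $|D_i|\leq a_i$.

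The key step is Hoeffding's lemma in its conditional form: $\mathbb E(e^{\lambda D_i}\mid\mathcal F_{i-1})\leq e^{\lambda^2a_i^2/2}$ for all $\lambda>0$. To prove it, I would use convexity of $x\mapsto e^{\lambda x}$ on $[-a_i,a_i]$:
$$
e^{\lambda x}\leq\frac{a_i-x}{2a_i}e^{-\lambda a_i}+\frac{a_i+x}{2a_i}e^{\lambda a_i}.
$$
Taking conditional expectations and using the zero conditional mean gives $\mathbb E(e^{\lambda D_i}\mid\mathcal F_{i-1})\leq\cosh(\lambda a_i)$. Comparing Taylor series termwise via $(2k)!\geq2^kk!$ then gives $\cosh(\lambda a_i)\leq e^{\lambda^2a_i^2/2}$.

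Next I would iterate with the tower property. Since $\mathcal F_{r-1}$ determines $M_{r-1}$,
$$
\mathbb E e^{\lambda(M_r-M_0)}=\mathbb E\bigl(e^{\lambda(M_{r-1}-M_0)}\,\mathbb E(e^{\lambda D_r}\mid\mathcal F_{r-1})\bigr)\leq e^{\lambda^2a_r^2/2}\,\mathbb E e^{\lambda(M_{r-1}-M_0)}.
$$
Inducting on $r$ yields $\mathbb E e^{\lambda(M_r-M_0)}\leq\exp(\frac{\lambda^2}{2}\sum a_i^2)$. Because $M_0$ is constant, this is an ordinary expectation and no conditioning on $M_0$ is needed.

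Finally, Markov's inequality gives $\Pr(M_r-M_0\geq t)\leq\exp(-\lambda t+\frac{\lambda^2}{2}\sum a_i^2)$. Choosing $\lambda=t/\sum a_i^2$ gives the stated bound.

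The only nonroutine point is Hoeffding's lemma with the constant $\frac12$. The convexity-plus-$\cosh$ route gives that constant directly, so I do not expect a real obstacle.
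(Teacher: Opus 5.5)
The paper does not prove this lemma; it is quoted from Azuma and used as a black box, so there is no in-paper argument to compare against. Your exponential-moment proof is the standard one and is correct as written. The convexity bound gives $\mathbb E(e^{\lambda D_i}\mid\mathcal F_{i-1})\leq\cosh(\lambda a_i)\leq e^{\lambda^2a_i^2/2}$. The tower property iterates this, and integrability is automatic because all quantities are bounded. Markov's inequality with $\lambda=t/\sum_i a_i^2$ gives the upper tail, and applying it to $-M_0,\ldots,-M_r$, which has the same natural filtration and increment bounds, gives the lower tail.

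Your argument never uses that $\mathcal F_{i-1}$ is the natural filtration of $(M_i)$. It therefore applies verbatim to the Doob exposure martingales in the paper's applications.
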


In both applications of Lemma~\ref{lem:alternate-azuma}, we expose
vertices successively in a uniform random ordering and take $M_i$ to
be the conditional expectation of the quantity being estimated after
$i$ exposures. If transposing two entries changes that quantity by
at most $a$, coupling two possible exposures by a transposition gives
$|M_i-M_{i-1}|\leq a$.

\subsection{Weak regularity and cut-norm sampling}
\label{app:weak-regularity}

The proof follows the argument of Dragani\'c, Keevash, and
M\"uyesser~\cite[Lemma~2.3]{DKMcyclic}: take a weak-regularity partition
and enlarge a sparse pair in the sample within each cell to obtain a
sparse pair in the original graph. A related argument for a single
sparse set appears in Sun, Wei, and Yang~\cite[Section~3]{SWY}; see also
Lang~\cite{Lang} for a framework based on families approximately closed
under subsampling. To allow $m$ to grow arbitrarily slowly, we use a
sampling theorem to bound the approximation error by a multiple of
$m^2$, rather than $n^2$.

For a real matrix $R$ indexed by a finite nonempty set $T$, its
\emph{normalized cut norm} is
$$
 \|R\|_\square=
 \frac{1}{|T|^2}\max_{X,Y\subseteq T}
 \left|\sum_{x\in X,\,y\in Y}R_{xy}\right|.
$$
The sets $X,Y$ may overlap. For a nonempty subset $Z\subseteq T$,
$R[Z]$ denotes the matrix restricted to rows and columns in $Z$;
its cut norm is normalized by $|Z|^2$.

A partition $\mathcal P=\{V_1,\ldots,V_k\}$ into nonempty classes is
\emph{equitable} if their sizes differ by at most one. For a
digraph $G$ with adjacency matrix $M$ and such a partition, set
$$
 d_{ij}=\frac{e_G(V_i,V_j)}{|V_i||V_j|},~
 (M_{\mathcal P})_{xy}=d_{ij}
 ~\text{if}~x\in V_i,~ y\in V_j.
$$
Thus $M_{\mathcal P}$ replaces each block of $M$ by its average.
For $X,Y\subseteq V(G)$, write
$$
 e_{\mathcal P}(X,Y)=
 \sum_{i,j\in[k]}d_{ij}|X\cap V_i||Y\cap V_j|.
$$
The definition includes $i=j$ and does not require $d_{ij}=d_{ji}$.

We use the following directed form of the Frieze--Kannan weak
regularity lemma~\cite[Theorem~2 and Section~5.1]{FK}. The equitable
graph version is stated in~\cite[Lemma~3.3]{DKMcyclic}; the matrix
argument applies without a symmetry assumption.

\begin{lemma}[Frieze--Kannan]\label{lem:alternate-fk}
For all $0<\delta<1$, there exist $K=K(\delta)$ and $n_0=n_0(\delta)$
such that all digraphs $G$ of order $n\geq n_0$ have an equitable
partition $\mathcal P=\{V_1,\ldots,V_k\}$ into $k\leq K$ nonempty
classes satisfying $\|M-M_{\mathcal P}\|_\square\leq\delta$.
Equivalently,
\begin{equation}\label{eq:alternate-fk-error}
 |e_G(X,Y)-e_{\mathcal P}(X,Y)|\leq\delta n^2
 ~\text{for all}~X,Y\subseteq V(G).
\end{equation}
\end{lemma}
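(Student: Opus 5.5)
The plan is to prove Lemma~\ref{lem:alternate-fk} in two stages. First, an $L^2$ energy-increment argument gives a (not necessarily equitable) partition with cut-norm error at most $\frac{\delta}{4}$. Second, that partition is made equitable, and the extra error is controlled using the fact that averaging over a partition does not increase cut norm. Throughout, view $M$ and the step matrices $M_{\mathcal P}$ as elements of $\mathbb R^{V\times V}$ with inner product $\langle A,B\rangle=n^{-2}\sum_{x,y}A_{xy}B_{xy}$. The map $M\mapsto M_{\mathcal P}$ is the orthogonal projection onto matrices that are constant on the blocks $V_i\times V_j$. No symmetry of $M$ is used anywhere, which is why the digraph case is no harder.

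\emph{Energy increment.} Start with $\mathcal P_0=\{V(G)\}$ and define the index $q(\mathcal P)=\|M_{\mathcal P}\|_2^2\in[0,1]$. Suppose $\|M-M_{\mathcal P}\|_\square>\frac{\delta}{4}$, and choose witnesses $X,Y$ with
$$
\Bigl|\sum_{x\in X,\,y\in Y}(M-M_{\mathcal P})_{xy}\Bigr|>\frac{\delta n^2}{4}.
$$
Let $\mathcal P'$ be the common refinement of $\mathcal P$ with $\{X,V\setminus X\}$ and $\{Y,V\setminus Y\}$. The matrix $1_X\otimes 1_Y$ is constant on the blocks of $\mathcal P'$. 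Hence, by Pythagoras and Cauchy--Schwarz,
$$
q(\mathcal P')-q(\mathcal P)=\|M_{\mathcal P'}-M_{\mathcal P}\|_2^2\geq\langle M-M_{\mathcal P},1_X\otimes1_Y\rangle^2>\frac{\delta^2}{16}.
$$
Since $q\leq1$, this stops after at most $\frac{16}{\delta^2}$ steps. Each step at most quadruples the number of classes, so we obtain $\mathcal P$ with $k_0\leq4^{16/\delta^2}$ classes and $\|M-M_{\mathcal P}\|_\square\leq\frac{\delta}{4}$.

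\emph{Equitization.} Set $L=\lceil\frac{16k_0}{\delta}\rceil$ and take $n_0$ so large that $\frac{n}{L}\geq1$ makes sense. Cut each class of $\mathcal P$ into pieces of size $\lfloor\frac nL\rfloor$ plus a remainder of fewer than $\frac nL$ vertices. Pool the remainders into a set $R$ with $|R|<\frac{k_0n}{L}\leq\frac{\delta n}{16}$, and redistribute $R$ so that the final partition $\mathcal Q$ has all sizes differing by at most one. This gives at most $K(\delta):=L+1$ classes. Every class of $\mathcal Q$ not meeting $R$ lies inside a class of $\mathcal P$; a bounded number of such leftover adjustments is absorbed by enlarging $R$ by $O(\frac{n}{L})$. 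Now write
$$
M-M_{\mathcal Q}=(M-M_{\mathcal P})-\Pi_{\mathcal Q}(M-M_{\mathcal P})+\bigl(M_{\mathcal P}-\Pi_{\mathcal Q}M_{\mathcal P}\bigr),
$$
where $\Pi_{\mathcal Q}$ denotes averaging over the blocks of $\mathcal Q$.

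The last bracket vanishes on blocks of $\mathcal Q$ lying inside blocks of $\mathcal P$. It is therefore supported on rows or columns meeting $R$, and has entries in $[-1,1]$, so its cut norm is at most $\frac{2|R|}{n}\leq\frac{\delta}{4}$ after adjusting constants.

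The key claim, which I expect to be the only genuinely non-routine step, is that $\|\Pi_{\mathcal Q}A\|_\square\leq\|A\|_\square$ for every matrix $A$. To prove it, note that $\sum_{X\times Y}\Pi_{\mathcal Q}A=\sum_{i,j}\alpha_i\beta_j\,A(V_i,V_j)$, where $\alpha_i=\frac{|X\cap V_i|}{|V_i|}$, $\beta_j=\frac{|Y\cap V_j|}{|V_j|}\in[0,1]$, and $A(V_i,V_j)$ is the block sum. This expression is multilinear in $(\alpha,\beta)$, so its absolute value is maximized at a vertex of $[0,1]^{2k}$. Such a vertex corresponds to $X,Y$ that are unions of classes, and there the expression equals a genuine cut sum of $A$.

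Putting the pieces together gives
$$
\|M-M_{\mathcal Q}\|_\square\leq\frac\delta4+\frac\delta4+\frac\delta4<\delta.
$$
Finally, \eqref{eq:alternate-fk-error} is just this inequality unnormalized, because $\sum_{X\times Y}M_{\mathcal Q}=e_{\mathcal Q}(X,Y)$ and $\sum_{X\times Y}M=e_G(X,Y)$.
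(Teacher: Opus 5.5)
The paper does not prove this lemma. It cites Frieze--Kannan and the equitable version in Dragani\'c--Keevash--M\"uyesser, remarking only that the matrix argument never uses symmetry. Your proposal supplies that argument, and its two main steps are sound. The first is the energy increment: refining by both $\{X,V\setminus X\}$ and $\{Y,V\setminus Y\}$ is exactly what is needed, since rows and columns share one partition. Your Cauchy--Schwarz line is justified by the identity $\langle M-M_{\mathcal P},1_X\otimes 1_Y\rangle=\langle M_{\mathcal P'}-M_{\mathcal P},1_X\otimes 1_Y\rangle$, which holds because $1_X\otimes 1_Y$ is $\mathcal P'$-block-constant. The second is the contraction $\|\Pi_{\mathcal Q}A\|_\square\leq\|A\|_\square$ via bilinearity in $(\alpha,\beta)$. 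Together these give $K(\delta)=2^{O(\delta^{-2})}$, matching the cited bound.

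The step that fails as written is the bound on $M_{\mathcal P}-\Pi_{\mathcal Q}M_{\mathcal P}$. If you ``redistribute $R$'' to equalize sizes, vertices of $R$ will in general land in many, possibly all, classes of $\mathcal Q$. Take a class $W=W_0\cup(W\cap R)$ with $W_0\subseteq V_i$. The average of $M_{\mathcal P}$ over $W\times W'$ then differs from $d_{ij}$, so the difference is nonzero on all of $W_0\times W_0'$, not only on rows or columns meeting $R$. Absorbing the affected classes wholesale into $R$ can cost $\Theta(n)$ vertices (up to about $L$ classes of order about $\frac{n}{L}$), not $O(\frac{n}{L})$.

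Either of two easy repairs works.
\begin{enumerate}
\item List $V(G)$ class by class of $\mathcal P$ and cut the list into $L$ consecutive intervals of sizes $\lfloor\frac nL\rfloor$ and $\lceil\frac nL\rceil$. Only the at most $k_0-1$ intervals straddling a boundary are mixed, and their union $Z$ satisfies $|Z|\leq k_0\lceil\frac nL\rceil\leq\frac{\delta n}{16}+k_0$. Your support argument then applies verbatim with $Z$ in place of $R$, giving cut norm at most $\frac{2|Z|}{n}$.
\item Keep any redistribution. For each class $W$ of $\mathcal Q$, fix $W_0\subseteq W$ lying in one class of $\mathcal P$ (for instance the original piece), and set $R'=\bigcup_W(W\setminus W_0)$. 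Bound the $\ell_1$ norm instead of the support: on $W\times W'$ the difference has absolute entry sum at most $2\bigl(|W\setminus W_0||W'|+|W||W'\setminus W_0'|\bigr)$. Summing over blocks gives $\|M_{\mathcal P}-\Pi_{\mathcal Q}M_{\mathcal P}\|_\square\leq\frac{4|R'|}{n}$.
\end{enumerate}
With either fix, the three error terms sum to at most $\delta$ for $n\geq n_0(\delta)$.
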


The cut-norm sampling theorem of Alon, Fernandez de la Vega, Kannan,
and Karpinski gives the following bounded-matrix consequence, stated
using our normalization.\footnote{Theorem~9 refers to the
\href{https://web.math.princeton.edu/~nalon/PDFS/adkk8.pdf}{conference manuscript}
of~\cite{AFKK}.}

\begin{theorem}[{Alon et al.~\cite[Theorem~9]{AFKK}}]
\label{thm:alternate-cut-sampling}
For all $0<\varepsilon<1$, there exists $s_0=s_0(\varepsilon)$ such
that the following holds. If $R\in[-1,1]^{T\times T}$ and $Z$ is a
uniformly random $s$-subset of $T$, where $s_0\leq s\leq|T|$, then
$$
 \Pr\bigl(\|R[Z]\|_\square>
          \|R\|_\square+\varepsilon\bigr)\leq\frac{1}{40}.
$$
No symmetry assumption is imposed on $R$.
\end{theorem}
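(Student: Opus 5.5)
We will not reprove the cited sampling theorem from scratch, but we sketch the argument we would follow, in the spirit of Alon, Fernandez de la Vega, Kannan, and Karpinski. Write $t=|T|$ and
$$
 \mathrm{cut}(R)=\max_{X,Y\subseteq T}\sum_{x\in X,\,y\in Y}R_{xy}.
$$
Since $\|R\|_\square=\frac{1}{t^2}\max\{\mathrm{cut}(R),\mathrm{cut}(-R)\}$, it suffices to show the following one-sided estimate: for each of $R$ and $-R$, with probability at least $1-\frac{1}{80}$,
$$
 \mathrm{cut}(R[Z])\leq \frac{s^2}{t^2}\mathrm{cut}(R)+\varepsilon s^2 .
$$
A union bound over the two signs then gives the theorem.

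\textbf{Step 1: replace the optimization over $X$ by a rule with few parameters.} For fixed $Y\subseteq Z$, the best choice of $X$ is $X^*(Y)=\{x\in Z:\sum_{y\in Y}R_{xy}>0\}$. We pick a uniformly random $q$-subset $D\subseteq Z$ with $q=q(\varepsilon)$, and for $Y_D\subseteq D$ we define
$$
 X'(Y_D)=\Bigl\{x\in Z:\tfrac{s}{q}\textstyle\sum_{y\in Y_D}R_{xy}>0\Bigr\}.
$$
Taking $Y_D=Y\cap D$, the estimated row sum differs from the true row sum over $Y$ by $O(s/\sqrt q)$ in mean square, for each $x$. Since $R$ has entries in $[-1,1]$, misclassifying $x$ costs at most $|\text{error}|$. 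Averaging over $x$, we get
$$
 \mathbb E\Bigl[\max_{Y}\,\bigl(\text{loss from using }X'(Y\cap D)\text{ instead of }X^*(Y)\bigr)\Bigr]\leq \frac{\varepsilon s^2}{10}
$$
for large $q$. The maximum over $Y$ needs care: we handle it by the symmetric step of also replacing the optimal $Y$ for the set $X'$ by its positive column sums. The upshot is
$$
 \mathrm{cut}(R[Z])\leq \max_{Y_D\subseteq D} g_{Y_D}(Z)+\frac{\varepsilon s^2}{5},
 \qquad
 g_{Y_D}(Z)=\sum_{y\in Z}\Bigl(\sum_{x\in X'(Y_D)}R_{xy}\Bigr)^+ .
$$
Markov's inequality converts the expectation bound into one holding with probability $1-\frac{1}{200}$.

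\textbf{Step 2: compare each $g_{Y_D}$ with the whole matrix.} Condition on $D$ and fix $Y_D$. The rule defining $X'$ then depends only on $D$, so it also defines a set $X'_T\subseteq T$, and $X'(Y_D)=X'_T\cap Z$. On $T$ we have
$$
 g^T=\sum_{y\in T}\Bigl(\sum_{x\in X'_T}R_{xy}\Bigr)^+\leq \mathrm{cut}(R),
$$
since this is the value of the cut $(X'_T,Y^+)$. We aim to show $\mathbb E\,g_{Y_D}(Z)\leq \frac{s^2}{t^2}g^T+\frac{\varepsilon s^2}{10}$ and then concentrate. For the concentration we use Azuma's inequality (Lemma~\ref{lem:alternate-azuma}) on exposure of $Z\setminus D$: swapping one sampled vertex changes $g_{Y_D}$ by $O(s)$, so the deviation probability is $\exp(-\Omega(\varepsilon^2 s))$. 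A union bound over the $2^q$ choices of $Y_D$ is affordable once $s\geq s_0(\varepsilon)$, because $q$ depends only on $\varepsilon$.

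\textbf{Main obstacle.} The hard part is the expectation in Step~2. The positive part $(\cdot)^+$ is convex, so Jensen's inequality points the wrong way: $\mathbb E(\text{sampled column sum})^+$ may exceed $(\frac{s}{t}\cdot\text{true column sum})^+$. We would handle this by bounding the difference by the standard deviation of the sampled column sum, which is $O(\sqrt s)$ by Lemma~\ref{lem:chernoff-hoeffding}. Summing over the $s$ columns in $Z$ gives an error $O(s^{3/2})=o(s^2)$, which is absorbed for $s\geq s_0$. Two smaller points also need care: the $q$ vertices of $D$ are not uniformly distributed once we condition on $D$, and they contribute only $O(qs)$; and the dependence between the row sample and the column sample (both lie in $Z$) must be handled by splitting $Z$ into two independent halves at a cost of a constant factor. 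Once these are settled, the failure probabilities add to at most $\frac{1}{40}$.
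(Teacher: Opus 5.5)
The paper does not reprove this statement. It specializes the general array-sampling theorem of Alon, Fernandez de la Vega, Kannan, and Karpinski to dimension $2$ and entry bound $1$. It uses $\sum_{x,y\in T}R_{xy}^2\le|T|^2$ to bound the normalized error terms by $10a^2+5a$ once $s\ge 128000/a^6$, and then takes $a$ small in terms of $\varepsilon$.

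Your sketch is a genuinely different, self-contained route, and it is sound in outline. You reduce to the one-sided bound $\mathrm{cut}(R[Z])\le\frac{s^2}{t^2}\mathrm{cut}(R)+\varepsilon s^2$ for $\pm R$. A random $q$-subset $D\subseteq Z$ then fixes the row set through signs of estimated row sums. For fixed $(D,Y_D)$ this row set is the trace on $Z$ of a fixed set $X'_T\subseteq T$, so only $2^q$ fixed rules need to be compared with the whole matrix.

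What each route buys:
\begin{itemize}
\item Yours needs only variance bounds for sampling without replacement, Azuma, and a union bound. With your choices it gives $q$ of order $\varepsilon^{-2}$ and $s_0$ of order $\varepsilon^{-4}$, better than the order $\varepsilon^{-6}$ threshold coming from the cited error terms.
\item The citation is a one-line deduction with explicit constants, and it covers both tails and arrays of any dimension.
\item Only the upper tail is used later, in Lemma~\ref{lem:alternate-cut-sampling}, so your one-sided version suffices.
\end{itemize}

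Two steps of your sketch should be corrected.

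First, the bound $\mathbb E[\max_Y(\text{loss})]\le\frac{\varepsilon s^2}{10}$ is false. For the all-ones matrix and any $D$, the set $Y=Z\setminus D$ gives $X'(Y\cap D)=\emptyset$ and loss $s(s-q)$. The ``symmetric step'' you describe does not address this, but no maximum over $Y$ is needed. Fix an optimal $Y^*=Y^*(Z)$ for $\mathrm{cut}(R[Z])$ before choosing $D$. Conditional on $Z$, the set $D$ is independent of $Y^*$, so $\mathbb E[\text{loss}(Y^*)\mid Z]\le s^2/\sqrt q$. Then $\mathrm{cut}(R[Z])\le g_{Y^*\cap D}(Z)+\text{loss}(Y^*)\le\max_{Y_D\subseteq D}g_{Y_D}(Z)+\text{loss}(Y^*)$, which gives your displayed upshot after Markov.

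Second, splitting $Z$ into two halves does not remove the row--column dependence, since the diagonal blocks $Z_a\times Z_a$ keep it. No splitting is needed, though. Conditional on $D$ and on $y\in Z$, the set $Z\setminus(D\cup\{y\})$ is a uniform subset of $T\setminus(D\cup\{y\})$. Hence the sampled column sum at $y$ has mean within $O(q)$ of $\frac{s}{t}\sum_{x\in X'_T}R_{xy}$ and standard deviation $O(\sqrt s)$. Also $\Pr(y\in Z\mid D)\le\frac{s}{t}$ for $y\notin D$. Summing over $y$ gives $\mathbb E[g_{Y_D}(Z)\mid D]\le\frac{s^2}{t^2}g^T+O(qs+s^{3/2})$ directly.

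With these two repairs, your concentration step and the union bound over the $2^q$ choices of $Y_D$ go through as you describe.
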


To obtain this statement from the cited theorem, take the array
dimension to be $2$ and the entry bound to be $1$. Since
$\sum_{x,y\in T}R_{xy}^2\leq|T|^2$, its normalized error terms are
at most $10a^2+5a$ when $s\geq\frac{128000}{a^6}$. Taking $a$ small
enough in terms of $\varepsilon$ gives the statement.

The exponential bound needed below follows from this theorem and
Lemma~\ref{lem:alternate-azuma}.

\begin{lemma}\label{lem:alternate-cut-sampling}
For all $0<\varepsilon<1$, there exists $s_1=s_1(\varepsilon)$ such
that, for all $R\in[-1,1]^{T\times T}$ and uniformly random
$s$-subsets $Z$ with $s_1\leq s\leq|T|$,
\begin{equation*}
 \Pr\bigl(\|R[Z]\|_\square>
          \|R\|_\square+\varepsilon\bigr)
 \leq e^{-\varepsilon^2s/128}.
\end{equation*}
\end{lemma}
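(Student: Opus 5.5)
Let $F(Z)=\|R[Z]\|_\square$. The plan is to boost the constant failure probability $\frac{1}{40}$ from Theorem~\ref{thm:alternate-cut-sampling} to an exponential bound in two steps: first show that $F$ concentrates around its mean, then show that its mean is close to $\|R\|_\square$.

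\textbf{Step 1: concentration via Azuma.} Expose the $s$ elements of $Z$ one at a time, as in the exposure scheme described before Lemma~\ref{lem:alternate-azuma}. Let $M_i$ be the conditional expectation of $F(Z)$ after $i$ exposures. The key fact is that swapping one element of $Z$ for another changes $F$ by at most $\frac{4}{s}$. Indeed, for fixed $X,Y\subseteq Z$, replacing one element alters at most $2s-1$ entries of the sum, each by at most $2$. So the unnormalized quantity $s^2F(Z)$ changes by at most $4s$. Taking maxima over $X,Y$ preserves this Lipschitz bound (one replaces $X,Y$ by their images under the swap). The transposition coupling then gives $|M_i-M_{i-1}|\leq\frac{4}{s}$. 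Lemma~\ref{lem:alternate-azuma} yields
\[
\Pr\bigl(|F(Z)-\mathbb E F(Z)|\geq t\bigr)\leq 2\exp\bigl(-t^2s/32\bigr).
\]

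\textbf{Step 2: the mean is close to $\|R\|_\square$.} Apply Theorem~\ref{thm:alternate-cut-sampling} with $\varepsilon/2$, valid for $s\geq s_0(\varepsilon/2)$. This gives $F(Z)\leq\|R\|_\square+\frac{\varepsilon}{2}$ with probability at least $\frac{39}{40}$. Suppose $\mathbb E F(Z)>\|R\|_\square+\frac{3\varepsilon}{4}$. Then by Step~1 with $t=\frac{\varepsilon}{4}$,
\[
\Pr\bigl(F(Z)\leq\|R\|_\square+\tfrac{\varepsilon}{2}\bigr)\leq 2e^{-\varepsilon^2s/512},
\]
which is less than $\frac{39}{40}$ once $s$ is large. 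This contradiction gives $\mathbb E F\leq\|R\|_\square+\frac{3\varepsilon}{4}$.

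\textbf{Step 3: conclusion.} Apply Step~1 once more with $t=\frac{\varepsilon}{4}$. The failure probability is at most $e^{-\varepsilon^2s/512}$ up to a factor of $2$ (one-sided Azuma gives exactly $e^{-\varepsilon^2s/512}$).

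\textbf{Main obstacle: the constant.} This route gives exponent $\frac{\varepsilon^2}{512}$, while the target is $\frac{\varepsilon^2}{128}$. To reach $\frac{1}{128}$, I would tighten both halves of the argument:
\begin{itemize}
\item Sharpen the Lipschitz constant to $\frac{2}{s}$ (about $\frac{2(2s-1)}{s^2}$), arguing carefully since entries lie in $[-1,1]$ and only the row and column of the swapped element change.
\item Use a smaller gap in Step~2: apply Theorem~\ref{thm:alternate-cut-sampling} with $\varepsilon'\ll\varepsilon$, so that $\mathbb E F\leq\|R\|_\square+\varepsilon'+o(1)$.
\end{itemize}
Then Azuma with $t\approx\varepsilon$ and $a_i=\frac{2}{s}$ gives $\exp(-t^2s/8)$. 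With $t$ close to $\varepsilon$ this is at most $e^{-\varepsilon^2s/128}$, with room to spare. Finally, choose $s_1(\varepsilon)$ large enough for all of these steps.
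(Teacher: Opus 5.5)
Your argument is the paper's own: Azuma with the $\frac{4}{s}$ bounded-difference constant, the $\frac{39}{40}$ sampling bound to pin $\mathbb E f$ near $\|R\|_\square$, then the upper tail. Your shortfall to $e^{-\varepsilon^2s/512}$ in Steps 2--3 is only gap bookkeeping, and your second bullet alone repairs it, exactly as the paper does: it applies Theorem~\ref{thm:alternate-cut-sampling} with $\frac{\varepsilon}{4}$, deduces $\mathbb E f\leq\|R\|_\square+\frac{\varepsilon}{2}$, and takes $t=\frac{\varepsilon}{2}$, giving $e^{-(\varepsilon/2)^2s/32}=e^{-\varepsilon^2s/128}$ with no change to the Lipschitz constant. (Your first bullet is unnecessary and its parenthetical is off, since $\frac{2(2s-1)}{s^2}\approx\frac{4}{s}$; the bound $\frac{2s-1}{s^2}<\frac{2}{s}$ does hold if you compare with $X\setminus\{z\}$, $Y\setminus\{z\}$, deleting the swapped element rather than transferring its membership, because each of the at most $2s-1$ removed entries has absolute value at most $1$.)
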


\begin{proof}
Replacing one vertex of $Z$ changes only one row and one column of
$R[Z]$, so it changes $f(Z)=\|R[Z]\|_\square$ by at most
$\frac{4}{s}$. Exposing the $s$ sampled vertices and applying
Lemma~\ref{lem:alternate-azuma} gives
$\Pr(f-\mathbb E f\geq t)\leq e^{-t^2s/32}$ and
$\Pr(f-\mathbb E f\leq-t)\leq e^{-t^2s/32}$ for all $t>0$.
Theorem~\ref{thm:alternate-cut-sampling}, with
$\frac{\varepsilon}{4}$ in place of $\varepsilon$, gives
$\Pr(f\leq\|R\|_\square+\frac{\varepsilon}{4})\geq\frac{39}{40}$.
If $\mathbb E f>\|R\|_\square+\frac{\varepsilon}{2}$, the lower-tail
bound would make this probability at most $e^{-\varepsilon^2s/512}$,
a contradiction for sufficiently large $s$. Hence
$\mathbb E f\leq\|R\|_\square+\frac{\varepsilon}{2}$, and the upper-tail
bound with $t=\frac{\varepsilon}{2}$ proves the lemma.
\end{proof}

\begin{proof}[First proof of Proposition~\ref{prop:alternate-sparse-pairs}]
Set $\delta=\frac{\nu^2}{1024}$, and take $\mathcal P$ from
Lemma~\ref{lem:alternate-fk}. Let $m_0$ be sufficiently
large in terms of $\nu$, so in particular $k\leq\frac{\nu n}{32}$,
and set $p=\frac{m}{n}$. Equitability gives
$p|V_i|\geq\frac{m}{2k}$. By Lemma~\ref{lem:chernoff-hoeffding} and a
union bound,
\begin{equation}\label{eq:alternate-fk-cell-sample}
 |T\cap V_i|=\left(1\pm\frac{\nu}{8}\right)p|V_i|
 ~\text{for all}~i\in[k]~\text{and}~T\in\{U,V\},
\end{equation}
except with probability at most
$4k\exp(-\frac{\nu^2m}{128k^2})$.

Set $R=M-M_{\mathcal P}$, where $M$ is the adjacency matrix of $G$.
We have $R_{xy}\in[-1,1]$ and
$\|R\|_\square\leq\delta$. The set $Z=U\cup V$ is uniform of size
$m$ or $2m$, according to the sampling model.
By Lemma~\ref{lem:alternate-cut-sampling}, with probability at least
$1-e^{-\delta^2m/128}$,
\begin{equation}\label{eq:alternate-fk-sampled-error}
 |e_G(A,B)-e_{\mathcal P}(A,B)|\leq8\delta m^2
 ~\text{for all}~A\subseteq U,~ B\subseteq V.
\end{equation}
Here we used $\|R[Z]\|_\square\leq2\delta$ and $|Z|\leq2m$.

Assume that~\eqref{eq:alternate-fk-cell-sample}
and~\eqref{eq:alternate-fk-sampled-error} hold. Suppose for a
contradiction that $A\subseteq U$, $B\subseteq V$
satisfy~\eqref{eq:alternate-sample-sizes} and
$e_G(A,B)\leq\frac{\nu^2m^2}{64}$. Choose sets
$P,Q\subseteq V(G)$ so that
$$
 |P\cap V_i|=\left\lfloor\frac{|A\cap V_i|\,|V_i|}{|U\cap V_i|}\right\rfloor
 ~\text{and}~
 |Q\cap V_i|=\left\lfloor\frac{|B\cap V_i|\,|V_i|}{|V\cap V_i|}\right\rfloor
$$
for all $i\in[k]$. The sets $P,Q$ may overlap. From the cell counts,
the sample-size conditions, and $k\leq\frac{\nu n}{32}$, we obtain
\begin{align*}
 |P|,|Q|\geq\frac{3\tau n}{2(1+\nu/8)}-k\geq\tau n~\text{and}~|P|+|Q|\geq\frac{1-\nu/4}{1+\nu/8}n-2k
          \geq\left(1-\frac{\nu}{2}\right)n.
\end{align*}
Also, all $d_{ij}$ are nonnegative, so
$e_{\mathcal P}(P,Q)\leq\frac{e_{\mathcal P}(A,B)}{(1-\nu/8)^2p^2}$.
Combining this inequality with~\eqref{eq:alternate-fk-error}
and~\eqref{eq:alternate-fk-sampled-error} gives
$$
 e_G(P,Q)\leq
 \left(\delta+\frac{\nu^2/64+8\delta}{(1-\nu/8)^2}\right)n^2
 <\frac{\nu^2n^2}{4},
$$
contrary to~\eqref{eq:alternate-host-density}. Since $k$ is bounded in
terms of $\nu$, a union bound on the two exceptional events proves
the proposition after decreasing $c$ and increasing $m_0$.
\end{proof}

\subsection{Partition testing}

The second proof uses the partition-testing theorem of Fischer,
Matsliah, and Shapira. We first give their definitions and theorem in
the special case needed here.\footnote{Numbered references to~\cite{FMS}
in this subsection follow the
\href{https://www.math.tau.ac.il/~asafico/regalg.pdf}{authors' manuscript}.}

A \emph{marked digraph} is a digraph $H$ together with a specified
subset $L\subseteq V(H)$. Let $N=|V(H)|$. For an ordered partition
$\mathcal C=(C_1,\ldots,C_k)$ of $V(H)$, allowing empty classes, its
\emph{density vector} consists of the coordinates
$$
 \rho_i=\frac{|C_i|}{N},~
 \mu_{ij}=\frac{e_H(C_i,C_j)}{N^2},~
 \lambda_i=\frac{|C_i\cap L|}{N}
 ~(\text{for all } i,j\in[k]).
$$
The $\lambda_i$ are omitted for an unmarked digraph. All ordered
pairs $(i,j)$ are included, also when $i=j$; the edge counts are
normalized by $N^2$, not by $|C_i||C_j|$. These are the density
tensors of~\cite[Definition~2.1]{FMS} for one binary relation
(directed edges), with one unary relation (the marking) when needed.

Let $\Psi$ be a set of allowed vectors with coordinates in $[0,1]$.
Following~\cite[Definition~2.2]{FMS}, a partition \emph{satisfies}
$\Psi$ if its density vector belongs to $\Psi$. It
\emph{$\varepsilon$-approximately satisfies} $\Psi$ if there exists
$\psi\in\Psi$ differing from its density vector by at most
$\varepsilon$ in all coordinates. It \emph{$\varepsilon$-closely
satisfies} $\Psi$ if, in addition, its class-size coordinates
$\rho_i$ agree exactly with those of $\psi$. The digraph $H$
satisfies any of these conditions if there exists a partition of
$V(H)$ satisfying it.

A \emph{tester} is a randomized algorithm which is given $N$ and
access to the vertices labelled by $[N]$. It may sample uniform
vertices, ask whether a specified ordered pair is an edge, and,
in the marked case, ask whether a specified vertex belongs to $L$.
Its output is either accept or reject. The bound below concerns
the number of vertices it examines, independently of $N$.

\begin{theorem}[{Fischer--Matsliah--Shapira~\cite[Theorem~3]{FMS}}]
\label{thm:alternate-partition-testing}
Fix $k\in\mathbb N$ and $0<\varepsilon<1$. Let $\Psi$ be a set of
density vectors specified by a fixed finite system of linear
equalities and weak inequalities. There exists a tester using at
most $q=q(k,\varepsilon)$ vertices in its samples and queries such
that, for all marked or unmarked digraphs $H$,
\begin{enumerate}[label=\textup{(\roman*)}]
\item if $H$ satisfies $\Psi$, it accepts with probability at least
$\frac{15}{16}$;
\item if $H$ does not $\varepsilon$-closely satisfy $\Psi$, it rejects
with probability at least $\frac{15}{16}$.
\end{enumerate}
Here $H$ has the marking convention used in defining $\Psi$.
The bound $q$ is independent of $|V(H)|$ and of the coefficients
in the constraints defining $\Psi$.
\end{theorem}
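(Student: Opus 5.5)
This is a cited result, and in the paper it is simply imported from~\cite{FMS}. If I had to prove it, I would use the natural sample-and-check tester. Fix $\varepsilon'=\varepsilon/4$ and a sample size $q=q(k,\varepsilon)$. Choose a uniform $q$-set $Z\subseteq V(H)$, query all ordered pairs in $Z$ and, in the marked case, all memberships in $L$. Accept if and only if $H[Z]$ has an ordered $k$-partition whose density vector, normalized by $q$ and $q^2$, $\varepsilon'$-approximately satisfies $\Psi$. The number of examined vertices is $q$, independent of $N$ and of the coefficients defining $\Psi$, because the test only compares density vectors coordinatewise with $\Psi$.

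\emph{Completeness.} Suppose $\mathcal C=(C_1,\ldots,C_k)$ satisfies $\Psi$, and restrict $\mathcal C$ to $Z$. The coordinates $|C_i\cap Z|/q$ and $|C_i\cap Z\cap L|/q$ concentrate by Lemma~\ref{lem:chernoff-hoeffding}. Each $e_H(C_i\cap Z,C_j\cap Z)/q^2$ concentrates by Lemma~\ref{lem:alternate-azuma} applied to vertex exposure, since changing one sampled vertex alters it by at most $2/q$. A union bound over the $O(k^2)$ coordinates shows that, for $q$ large in terms of $k,\varepsilon$, the restricted partition is within $\varepsilon'$ of the original density vector $\psi\in\Psi$ with probability at least $\frac{15}{16}$, so the tester accepts.

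\emph{Soundness.} I would prove the contrapositive: if the tester accepts with probability more than $\frac1{16}$, then $H$ $\varepsilon$-closely satisfies $\Psi$. The key ingredient is a lifting statement. With high probability over $Z$, every $k$-partition of $H[Z]$ has a $k$-partition of $H$ whose density vector differs from it by at most $\varepsilon'$ in every coordinate. I would obtain this from Lemma~\ref{lem:alternate-fk} and Theorem~\ref{thm:alternate-cut-sampling}, which remain usable in this appendix, as in the first proof of Proposition~\ref{prop:alternate-sparse-pairs}. Take a weak regular partition $\mathcal P$ of $H$, refined by $L$ in the marked case, and apply cut-norm sampling to $M-M_{\mathcal P}$. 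Then every partition of $Z$ is described, up to $\varepsilon'$, by its intersection profile with the cells of $\mathcal P$. Realize the same fractional profile inside each cell of $H$, rounding cell by cell with error $O(K/N)$. Edge densities then transfer through $e_{\mathcal P}$ and~\eqref{eq:alternate-fk-error}, and the $\lambda_i$ transfer through the $L$-refinement. Since acceptance happens with positive probability, some $Z$ is simultaneously good and accepting. This gives a partition of $H$ within $2\varepsilon'$ of some $\psi'\in\Psi$ in all coordinates.

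\emph{Main obstacle.} The hardest step is upgrading approximate satisfaction to close satisfaction, in which the class sizes $\rho_i$ must equal those of some $\psi\in\Psi$ exactly. My first attempt would be to move at most $2\varepsilon'N$ vertices between classes so that $|C_i|/N$ becomes the class-size part of a suitable $\psi\in\Psi$. Each moved vertex changes every $\mu_{ij}$ and $\lambda_i$ by at most $2/N$, so the other coordinates stay within $\varepsilon$. The difficulty is that the target $\psi'$ need not have $\rho$-coordinates in $\frac1N\mathbb Z$, and perturbing $\psi'$ inside the polytope $\Psi$ without any dependence on its coefficients is not automatic. Here I would try to follow~\cite{FMS} itself. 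They choose the accept rule so that it already searches over vectors with $\rho$ realized exactly, that is, they test the convex combination of feasible profiles, which is why their $q$ is coefficient-independent. If that fails, I would simply cite~\cite[Theorem~3]{FMS} for this step, which is how the paper uses the theorem.
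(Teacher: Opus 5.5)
There is a genuine gap in part~(ii), and it lies in your accept rule, not only in its analysis. A tester that accepts whenever some $k$-partition of $H[Z]$ $\varepsilon'$-approximately satisfies $\Psi$ cannot satisfy~(ii) as stated.

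\emph{Counterexample.} Take $k=2$ and let $\Psi$ consist of all vectors with $\rho_1=\frac13$ and $\rho_2=\frac23$. If $N$ is not divisible by $3$, no partition of an $N$-vertex digraph has $\rho_1=\frac13$. So no such $H$ $\varepsilon$-closely satisfies $\Psi$, and~(ii) demands rejection with probability at least $\frac{15}{16}$. Yet your tester accepts every $H$ once $q\geq\frac{1}{\varepsilon'}$: let $|D_1|$ be an integer nearest to $\frac q3$, and let $\psi$ agree with the density vector of $(D_1,D_2)$ except that $\rho_1(\psi)=\frac13$ and $\rho_2(\psi)=\frac23$. Thus no upgrade step in the analysis can rescue this rule.

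\emph{The fix.} Your remark that the accept rule should search only over vectors whose class sizes are realized exactly is the right instinct. It becomes concrete once you use $N$, which the tester is given. Let $\Psi_N$ be the set of $\psi\in\Psi$ with $N\rho_i(\psi)\in\mathbb Z_{\geq0}$ for all $i$ and $\sum_i\rho_i(\psi)=1$. Accept if and only if some $k$-partition of $H[Z]$ $\varepsilon'$-approximately satisfies $\Psi_N$.
\begin{itemize}
\item Completeness is unaffected, since a witness for satisfaction is the density vector of an actual partition of $H$ and hence lies in $\Psi_N$.
\item For soundness, your lifting gives a partition $(C_1,\ldots,C_k)$ of $H$ within $2\varepsilon'$ of some $\psi\in\Psi_N$ in every coordinate. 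Move vertices only out of classes with $|C_i|>N\rho_i(\psi)$ and only into classes with $|C_i|<N\rho_i(\psi)$. Then at most $2\varepsilon'N$ vertices enter or leave each class. Hence each $e_H(C_i,C_j)$ changes by less than $4\varepsilon'N^2$ and each $|C_i\cap L|$ by at most $2\varepsilon'N$, while the class sizes become exactly $N\rho_i(\psi)$.
\end{itemize}
With $\varepsilon'=\frac{\varepsilon}{6}$ this is $\varepsilon$-close satisfaction. No perturbation inside the polytope is needed, and $q$ stays independent of the coefficients because only coordinatewise distances enter.

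Apart from this, your argument gives the weaker form of~(ii) that the paper actually uses (rejection when $H$ does not $\varepsilon$-approximately satisfy $\Psi$), modulo two routine points. Lemma~\ref{lem:alternate-fk} needs $N\geq n_0(\delta)$, so smaller inputs should be read in full. Refining the Frieze--Kannan partition by $L$ destroys equitability, so very small cells must be set aside.

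Note, however, that the paper gives no proof of this theorem. It cites~\cite{FMS} precisely because their proof is a direct sampling argument that avoids regularity, which makes the second proof of Proposition~\ref{prop:alternate-sparse-pairs} independent of the first. Your route rebuilds the tester from Lemma~\ref{lem:alternate-fk} and Theorem~\ref{thm:alternate-cut-sampling}, the same ingredients as the first proof. Indeed, your lifting step applied directly to sparse pairs essentially \emph{is} that first proof. Once repaired, your derivation is a legitimate proof of the statement, but it makes the detour through partition testing redundant and loses the regularity-free character the paper wants from the second argument.
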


The source theorem allows more general sets $\Psi$, provided one
can check whether a given vector is within a prescribed distance
in all coordinates of a vector in $\Psi$, and produce such a vector
when one exists~\cite[Definition~2.3]{FMS}. For a fixed system of
linear constraints, this is a linear feasibility problem: intersect
the constraints defining $\Psi$ with the prescribed interval in
each coordinate. Thus the stated specialization meets that
hypothesis. The proof of~\cite[Theorem~3]{FMS} uses sampling without
a regularity lemma. We will use the weaker consequence of~(ii)
that the tester rejects with probability at least $\frac{15}{16}$
whenever $H$ does not $\varepsilon$-approximately satisfy $\Psi$.

\begin{proof}[Second proof of Proposition~\ref{prop:alternate-sparse-pairs}]
Since the partition tester is given the order of its input, we compare
balanced blow-ups of the host and the sample having the same order.
After uniform random labelling, a tester examining only a bounded number
of vertices has nearly the same acceptance probability on these two inputs.
This will bound away from one the probability of a pair approximately
satisfying the size and sparsity conditions; Azuma's inequality will then
give the required exponential bound.
We first establish a weighted consequence of~\eqref{eq:alternate-host-density},
which ensures that the host blow-ups exclude the relevant sparse pairs.
For all sufficiently large $n$ in terms of $\nu$, if
$a_v,b_v\in[0,1]$ satisfy
$$
 \sum_v a_v,~ \sum_v b_v\geq\frac{5\tau n}{4},~\text{and}~
 \sum_v(a_v+b_v)\geq\left(1-\frac{3\nu}{8}\right)n,
$$
then
\begin{equation}\label{eq:alternate-weighted-density}
 \sum_{uv\in E(G)}a_u b_v\geq\frac{\nu^2n^2}{8}.
\end{equation}
Indeed, choose random sets $P,Q$ by including $v$ with probabilities
$a_v,b_v$, independently over all $2n$ choices.
Lemma~\ref{lem:alternate-hoeffding} shows that the size conditions
in~\eqref{eq:alternate-host-density} hold with probability at least
$1-3e^{-\nu^2n/64}\geq\frac{1}{2}$. Taking the expectation of
$e_G(P,Q)$ proves~\eqref{eq:alternate-weighted-density}.

For a digraph $H$, let $H^{(b)}$ denote its balanced blow-up, in which
each vertex is replaced by $b$ clones and each edge by all edges
between the corresponding clone classes. Marked vertices have
marked clones. Set $N=bn$. Applying~\eqref{eq:alternate-weighted-density}
to the proportions of clones in two sets shows that, for all
$A,B\subseteq V(G^{(b)})$ with $|A|,|B|\geq\frac{5\tau N}{4}$ and
$|A|+|B|\geq(1-\frac{3\nu}{8})N$, we have
\begin{equation}\label{eq:alternate-blowup-density}
 e(A,B)\geq\frac{\nu^2N^2}{8}.
\end{equation}

Set $\varepsilon=\frac{\nu^2}{1024}$. To measure the simultaneous
failure of the inequalities required for a sparse pair, define
\begin{equation*}
 F(U,V)=\min_{\substack{A\subseteq U\\B\subseteq V}}\max\Bigl\{
 \frac{3\tau}{2}-\frac{|A|}{m},~
 \frac{3\tau}{2}-\frac{|B|}{m},~
 1-\frac{\nu}{4}-\frac{|A|+|B|}{m},~
 \frac{e_G(A,B)}{m^2}-\frac{\nu^2}{64},~0\Bigr\}.
\end{equation*}
Thus $F(U,V)=0$ precisely when the conclusion of the proposition
fails. We show first that $\Pr(F(U,V)\leq\varepsilon)<\frac{1}{4}$.

For a digraph $H$ of order $N$, consider the property that there
exist sets $A,B$ satisfying
\begin{equation}\label{eq:alternate-relaxed-pair}
 |A|,|B|\geq\left(\frac{3\tau}{2}-\varepsilon\right)N,~
 |A|+|B|\geq\left(1-\frac{\nu}{4}-\varepsilon\right)N
 \text{,~and~}e_H(A,B)\leq\left(\frac{\nu^2}{64}+\varepsilon\right)N^2.
\end{equation}
Using the four classes $C_1=A\setminus B$, $C_2=B\setminus A$,
$C_3=A\cap B$, and $C_4=V(H)\setminus(A\cup B)$, these become
the linear constraints
$$
 \rho_1+\rho_3,~\rho_2+\rho_3\geq\frac{3\tau}{2}-\varepsilon,~
 \rho_1+\rho_2+2\rho_3\geq1-\frac{\nu}{4}-\varepsilon
 \text{,~and~}\mu_{12}+\mu_{13}+\mu_{32}+\mu_{33}
 \leq\frac{\nu^2}{64}+\varepsilon.
$$
Conversely, a partition satisfying these constraints gives the
required sets $A=C_1\cup C_3$ and $B=C_2\cup C_3$.

For the disjoint sampling model, let $B_G$ be the bipartite digraph
on left and right copies of $V(G)$, with edges $u_Lv_R$ for all
$uv\in E(G)$ and with the left side marked. Let $B_{U,V}$ be the
corresponding digraph on copies of $U,V$. On a marked bipartite
digraph with sides $L,R$ of order $N$, require~\eqref{eq:alternate-relaxed-pair}
with $A\subseteq L$, $B\subseteq R$, using the classes
$A,L\setminus A,B,R\setminus B$. Since the total order is $2N$,
the normalized constraints are
$$
 \rho_1,~\rho_3\geq\frac{3\tau/2-\varepsilon}{2},~
 \rho_1+\rho_3\geq\frac{1-\nu/4-\varepsilon}{2}
 \text{,~and~}\mu_{13}\leq\frac{\nu^2/64+\varepsilon}{4},
$$
together with $\lambda_1=\rho_1$, $\lambda_2=\rho_2$, and
$\lambda_3=\lambda_4=0$. The marking equalities place the first two classes in $L$ and the
last two in $R$.

No blow-up of $G$ or $B_G$ $\varepsilon$-approximately satisfies
the corresponding set of allowed vectors. In the first case, such a
partition would give sets satisfying~\eqref{eq:alternate-relaxed-pair}
with an additional error of at most $2\varepsilon N$ in either
individual size, $4\varepsilon N$ in the sum, and $4\varepsilon N^2$
in the edge count. In the marked case, let $C_1,\ldots,C_4$ be the
partition. Approximate satisfaction of the marking constraints gives
$|C_1\setminus L|\leq4\varepsilon N$ and
$|C_3\cap L|\leq2\varepsilon N$. Set $A=C_1\cap L$ and
$B=C_3\cap R$. The additional errors are at most $6\varepsilon N$
in either size, $10\varepsilon N$ in the sum, and
$4\varepsilon N^2$ in the edge count. In both cases, the weaker bounds
$$
\begin{gathered}
 |A|,|B|\geq\left(\frac{3\tau}{2}-17\varepsilon\right)N
 \geq\frac{5\tau N}{4},~
 |A|+|B|\geq\left(1-\frac{\nu}{4}-17\varepsilon\right)N
 \geq\left(1-\frac{3\nu}{8}\right)N\text{,~and}\\
 e(A,B)\leq\left(\frac{\nu^2}{64}+17\varepsilon\right)N^2
 <\frac{\nu^2N^2}{8}
\end{gathered}
$$
contradict~\eqref{eq:alternate-blowup-density}. For $B_G^{(b)}$, identify
its left and right copies with $V(G^{(b)})$; the resulting sets may
overlap, and their directed edge count is unchanged.

Apply Theorem~\ref{thm:alternate-partition-testing} with accuracy
$\varepsilon$ to these two properties. Let $q=q(\nu)$ bound the
number of vertices used by either tester. Compare the tester
on $G^{(m)}$ and $(G[U])^{(n)}$ when $U=V$, and on $B_G^{(m)}$ and
$B_{U,V}^{(n)}$ otherwise. The two inputs have the same order and,
in the marked case, the same number of marked vertices. Give the
inputs uniform random labellings.

Their acceptance probabilities differ by at most
$\frac{8q^2}{m}$, where the probability for the second input includes
the random sample. To see this, expose the distinct labels used by
the tester, coupling their marks in the marked case. A new label is a
uniform unused vertex, even if the query was chosen using earlier
answers. After $j<q$ exposures, the probabilities of projecting onto
an already exposed vertex of $G$ are at most $\frac{j}{n-q}$ and
$\frac{j}{m-q}$, respectively. In the marked host graph, a projection
onto a vertex already seen on the other side also counts as a
repetition. Conditional on no repetition, the projected vertices
are uniform distinct vertices of $G$ in both experiments, so all
answers can be coupled to agree. A union bound gives the asserted
estimate when $m\geq2q$.

Take $m_0$ large enough that $m\geq m_0$ implies $m\geq128q^2$.
The tester accepts the host blow-up with probability at most
$\frac{1}{16}$, and hence the sampled blow-up with probability at
most $\frac{1}{8}$. If $F(U,V)\leq\varepsilon$, the sample and all
its balanced blow-ups satisfy the appropriate property exactly;
the conditional acceptance probability is at least $\frac{15}{16}$.
It follows that
$\Pr(F(U,V)\leq\varepsilon)\leq\frac{2}{15}<\frac{1}{4}$, and
$\mathbb E F(U,V)\geq\frac{3\varepsilon}{4}$.

Finally, realize the sample as the first $m$ vertices of a uniform
random ordering, or as its first two blocks of $m$ vertices.
Transposing two entries changes $F$ by at most $\frac{2}{m}$:
transfer the memberships in $A,B$ to the replacement vertices,
preserving the sizes and changing at most $2m$ directed edges.
Exposing at most $2m$ entries and applying
Lemma~\ref{lem:alternate-azuma} therefore gives
\begin{equation*}
 \Pr(F(U,V)=0)
 \leq\exp\left(-\frac{(\mathbb E F(U,V))^2m}{16}\right)
 \leq\exp\left(-\frac{9\varepsilon^2m}{256}\right).\qedhere
\end{equation*}
\end{proof}

\end{document}